\numberwithin{equation}{section}
 \def\Hom{\mbox{\rm Hom}}  \def\Iso{\mbox{\rm Iso}\,}
 \def\fin{\hfill$\square$}   \def\mod{\mbox{\rm \textbf{mod}}\,}
\def\cone{\mbox{\rm Cone}}\def\cocone{\mbox{\rm CoCone}}
\def\ind{\mbox{\rm ind}\,}\def\cim{\mbox{\rm sim}}\def\add{\mbox{\rm add}}
\def\A{\mathcal{A}\,} 
\def\Filt{\mbox{\rm \textbf{Filt}}}
\def\f-tors{\mbox{\rm f-tors}} \def\s\tau-tilt{\mbox{\rm s\tau-tilt}}\def\Fac{\mbox{\rm Fac}}
\def\Sub{\mbox{\rm Sub}}\def\tors{\mbox{\rm tors}}\def\torf{\mbox{\rm torf}}\def\id{\mbox{\rm id}}\def\tor{\mbox{\rm tor}}
\def\Im{\mbox{\rm Im}}
\def\C{\mathscr{C}}
\def\s{\mathfrak{s}}
\theoremstyle{plain}
\newtheorem{theorem}{\bf Theorem}[section]
\newtheorem{lemma}[theorem]{\bf Lemma}
\newtheorem{corollary}[theorem]{\bf Corollary}
\newtheorem{proposition}[theorem]{\bf Proposition}
\newtheorem*{theorem A}{\bf Theorem A}
\newtheorem*{theorem B}{\bf Theorem B}
\newtheorem*{theorem C}{\bf Theorem C}
\newtheorem*{theorem D}{\bf Theorem D}
\newtheorem*{theorem E}{\bf Theorem E}
\theoremstyle{definition}
\newtheorem{definition}[theorem]{\bf Definition}
\newtheorem{remark}[theorem]{\bf Remark}
\newtheorem{example}[theorem]{\bf Example}
\newcommand{\bt}{\begin{theorem}}
\newcommand{\et}{\end{theorem}}
\newcommand{\bl}{\begin{lemma}}
\newcommand{\el}{\end{lemma}}
\newcommand{\bd}{\begin{definition}}
\newcommand{\ed}{\end{definition}}
\newcommand{\bc}{\begin{corollary}}
\newcommand{\ec}{\end{corollary}}
\newcommand{\bp}{\begin{proof}}
\newcommand{\ep}{\end{proof}}
\newcommand{\bx}{\begin{example}}
\newcommand{\ex}{\end{example}}
\newcommand{\br}{\begin{remark}}
\newcommand{\er}{\end{remark}}
\newcommand{\be}{\begin{equation}}
\newcommand{\ee}{\end{equation}}
\newcommand{\ba}{\begin{align}}
\newcommand{\ea}{\end{align}}
\newcommand{\bn}{\begin{enumerate}}
\newcommand{\en}{\end{enumerate}}
\newcommand{\bcs}{\begin{cases}}
\newcommand{\ecs}{\end{cases}}
\newcommand{\RNum}[1]{\uppercase\expandafter{\romannumeral #1\relax}}
\renewcommand{\section}{\@startsection{section}{1}{0mm}
  {-\baselineskip}{0.5\baselineskip}{\bf\leftline}}
\begin{document}

\title[Extriangulated length  categories: torsion classes and $\tau$-tilting theory]{Extriangulated length categories: torsion\\[2mm] classes and $\tau$-tilting theory}
\author[Wang, Wei, Zhang and Zhou]{Li Wang,  Jiaqun Wei, Haicheng Zhang and Panyue Zhou}
\address{School of Mathematics-Physics and Finance, Anhui Polytechnic University, 241000 Wuhu, Anhui, P. R. China \endgraf}
\email{wl04221995@163.com {\rm (L. Wang)}}
\address{School of Mathematical Sciences, Zhejiang Normal University, 321004 Jinhua, Zhejiang, P. R. China\endgraf}
\email{weijiaqun5479@zjnu.edu.cn {\rm (J. Wei)}}

\address{School of Mathematical Sciences, Nanjing Normal University,
210023 Nanjing, Jiangsu,  P. R.  China\endgraf}
\email{zhanghc@njnu.edu.cn {\rm (H. Zhang)}}

\address{School of Mathematics and Statistics, Changsha University of Science and Technology, 410114 Changsha, Hunan,  P. R. China\endgraf}
\email{panyuezhou@163.com {\rm (P. Zhou)}}

%\thanks{$\ast$: Corresponding author.}
\subjclass[2020]{18G80; 18E10; 16G10; 18E05}
\keywords{$\tau$-tilting theory; torsion class; triangulated category;
abelian category; extriangulated category}

\begin{abstract}
This paper introduces the notion of extriangulated length categories, whose prototypical examples include abelian length categories and bounded derived categories of finite dimensional algebras with finite global dimension. We prove that an extriangulated category
$\mathcal{A}$ is a length category if and only if $\mathcal{A}$
admits a simple-minded system. Subsequently, we study the partially ordered set
$\tor_{\Theta}(\mathcal{A})$ of torsion classes in an extriangulated length category $(\mathcal{A},\Theta)$ from the perspective of lattice theory. It is shown that
$\tor_{\Theta}(\mathcal{A})$  forms a complete lattice, which is further proved to be completely semidistributive and algebraic. Moreover, we describe the arrows in the Hasse quiver of
$\tor_{\Theta}(\mathcal{A})$ using brick labeling. Finally, we introduce the concepts of support torsion classes
and support $\tau$-tilting subcategories in extriangulated length categories and  establish a bijection between these two notions, thereby generalizing the Adachi-Iyama-Reiten bijection for functorially finite torsion classes.
\end{abstract}

\maketitle
%The aim of this paper is to initiate the study of length extriangulated categories. This notion contains the notions of abelian length category, derived categories of finite dimension algebra as examples.
%%%%%%%%%%%%%%%%%%%%%%%%%%%%%%%%%%%%%%%%%%%%%%%%%%%%%%%%%%%%%%%%%%%%%%%%%%%%%%%%%%%%%%%%%%%%%%%%%%%%%%%%%%%%%%%%%%%%%%%%%%%%%%%%%%%%%%%%%%%%%%%%%%%%%%%%

{\vspace{-1cm}
\tableofcontents}

\section{Introduction}

The concept of abelian categories, introduced and thoroughly explored by Grothendieck, serves as a unified  framework that elegantly generalizes both module categories over rings and categories of sheaves over schemes. Among these, abelian length categories \cite{Ga}, which are abelian categories with the length of every object finite, play a pivotal role in representation theory. Their importance stems from their structural richness and their applications in various mathematical domains, including
$\tau$-tilting theory \cite{En2}, lattice theory \cite{As}, and the study of strongly unbounded types \cite{Kr}. Schur's lemma underscores their significance by showing that the set of isomorphism classes of simple objects in an abelian length category forms a semibrick, which is a combinatorial object with rich mathematical properties.

In triangulated categories, the notion of a simple-minded system generalizes the role of simple objects in abelian length categories. This concept, introduced by Koenig and Liu \cite{KL} in the study of stable module categories, has been further investigated, particularly in relation to mutations \cite{Du}. Dugas \cite{Du} proved that bounded derived categories of finite dimensional algebras with finite global dimension admit simple-minded systems. This result allows us to interpret these derived categories as analogs of abelian length categories in the triangulated framework, thereby establishing a meaningful connection between the two contexts.

Recently, Nakaoka and Palu \cite{Na} proposed the framework of extriangulated categories, which synthesizes and extends properties from both triangulated and exact categories. Within this broader framework, it becomes natural to ask how the notion of abelian length categories can be extended. To address this, we introduce the concept of extriangulated length categories, which formalizes the idea of abelian length categories in the setting of extriangulated categories.

An extriangulated length category is defined as a pair
$(\mathcal{A},\Theta)$, where $\mathcal A$ is an extriangulated category and
$\Theta:\Iso(\mathcal{A})\rightarrow \mathbb{N}$ is a length function that ensures every morphism in
$\mathcal A$ satisfies the $\Theta$-admissibility condition (see Definition \ref{Def}). This definition preserves the essential characteristics of abelian length categories while adapting them to the more flexible extriangulated setting. Our first main result, stated below, establishes a foundational theorem for extriangulated length categories and opens the door to further theoretical exploration and applications.
\begin{theorem A}\text{\rm (see Theorems \ref{main1}, \ref{main-L} for details)} Let $\mathcal{A}$ be an  extriangulated category.

$(1)$ If $\mathcal{X}$ is a simple-minded system in $\mathcal{A}$, then  $(\mathcal{A},l_{\mathcal{X}})$ is an extriangulated length category, where $l_{\mathcal{X}}$ is  the minimal length of $\mathcal{X}$-filtrations.

$(2)$  $\mathcal{A}$ is an extriangulated length category if and only if $\mathcal{A}$ has a simple-minded system.
\end{theorem A}

An important source of examples for extriangulated length categories comes from the notion of \emph{length wide subcategories} (see Definition \ref{Def25}). These subcategories extend the concept of wide subcategories from abelian length categories to the extriangulated setting, thereby enriching the structural landscape of extriangulated categories. Analogous to their counterparts in abelian categories, length wide subcategories can be generated by semibricks, a combinatorial structure that captures key categorical properties.
This connection not only deepens our understanding of the role that semibricks play in categorical frameworks but also allows us to generalize classical results into the realm of extriangulated categories. In particular, we extend the celebrated result of Ringel \cite{Ri}, which describes the structure and generation of certain subcategories in abelian length categories. Our generalization shows that the underlying principles of Ringel's theorem hold in this broader context, offering new insights and potential applications. More specifically, we establish the following theorem, which serves as a cornerstone of our study.

\begin{theorem B} \text{\rm (see Theorem \ref{main-s} for details)} Let $\mathcal{A}$ be an extriangulated category. Then there exists a bijection between the following two sets.

$(1)$ The set of  semibricks $\mathcal{X}$ in $\mathcal{A}$.

$(2)$ The set of  length wide subcategories $(\mathcal{C},\Theta)$ in $\mathcal{A}$.

The mutually inverse maps are given by $\mathcal{X}\mapsto (\Filt_{\mathcal{A}}(\mathcal{X}),l_{\mathcal{X}})$ and $(\mathcal{C},\Theta)\mapsto \cim(\mathcal{C})$.
\end{theorem B}

The concept of torsion classes forms a cornerstone in the representation theory of algebras, has significant contributions and applications in areas such as
$\tau$-tilting theory \cite{Ad}, lattice theory \cite{De}, stability conditions \cite{Br}, and Hall algebras \cite{Tr}. These subcategories play a pivotal role in understanding and organizing the structural properties of various algebraic and categorical frameworks.
In this paper, we extend the notion of {torsion classes} to the setting of extriangulated length categories $(\mathcal{A},\Theta)$ and study their fundamental properties. Specifically, torsion classes in this context are subcategories closed under extensions and quotients (see Definition \ref{Torsion}), preserving key homological behaviors. These subcategories exhibit elegant and useful properties, as established in Lemma \ref{L-2-1} and Proposition \ref{P-2-6}.
Furthermore, we explore the collection
$\torf_{\Theta}(\mathcal{A})$ , which consists of all torsion classes in
$\mathcal A$, through the lens of lattice theory. This approach reveals structural insights and provides a unified framework that parallels the classical results for finite-dimensional algebras \cite{De}. In particular, we show that the set
$\torf_{\Theta}(\mathcal{A})$ forms a lattice with desirable combinatorial and categorical properties. These findings culminate in the following theorem, which generalizes known results and establishes a robust theoretical foundation for torsion classes in extriangulated length categories.

\begin{theorem C} \text{\rm (see Theorem \ref{main2} for details)}  Let $(\mathcal{A},\Theta)$ be an extriangulated length category.

$(1)$ The set $\tors_{\Theta}(\mathcal{A})$ is a complete lattice.

$(2)$  The complete lattice $\tors_{\Theta}(\mathcal{A})$ is  completely semidistributive and algebraic.
\end{theorem C}

For a given set $\mathcal X$, we denote its cardinality by
$\#\mathcal{X}$. Let
 $[\mathcal{U},\mathcal{T}]$
 be an interval in  the lattice $\tors_{\Theta}(\mathcal{A})$ of torsion classes in the extriangulated length category $(\mathcal A,\Theta)$. We denote by
 ${\rm brick}(\mathcal{U}^{\perp}\cap \mathcal{T})$ the set of isomorphism classes of bricks contained in
$\mathcal{U}^{\perp}\cap \mathcal{T}$, where
$\mathcal{U}^{\perp}$
  represents the right orthogonal of
$\mathcal U$ in $\mathcal A$. This framework provides a natural way to study the combinatorial and structural properties of intervals within the lattice
$\tors_{\Theta}(\mathcal{A})$.
Using this notation, we arrive at the following description of the intervals in
$\tors_{\Theta}(\mathcal{A})$, which connects the cardinalities of such intervals to the numbers of certain bricks. This characterization not only highlights the intrinsic link between torsion classes and bricks but also underscores the interplay between algebraic and combinatorial aspects of extriangulated length categories, offering new insights into their lattice-theoretic properties. The precise formulation of this description is presented in the following.

\begin{theorem D} \text{\rm (see Theorem \ref{main4} for details)}   Let $(\mathcal{A},\Theta)$ be an extriangulated length category. Suppose that $[\mathcal{U},\mathcal{T}]\subseteq\tors_{\Theta}(\mathcal{A})$.

$(1)$ We have $\#[\mathcal{U},\mathcal{T}]=1$ if and only if {\rm brick}$(\mathcal{U}^{\perp}\cap \mathcal{T})=\emptyset$.

$(2)$ If $\#${\rm brick}$(\mathcal{U}^{\perp}\cap \mathcal{T})=1$, then $\#[\mathcal{U},\mathcal{T}]=2$.

$(3)$ If $\#[\mathcal{U},\mathcal{T}]=2$, then there exists a brick $S\in{\rm brick }(\mathcal{U}^{\perp}\cap \mathcal{T})$ satisfying the following conditions:
\begin{itemize}
   \item [(i)]$\Theta(S)\leq\Theta(S')$ for any $S'\in{\rm brick }(\mathcal{U}^{\perp}\cap \mathcal{T})$.
   \item [(ii)] $\mathcal{T}=\mathrm{T}_{\Theta}(\mathcal{U}\cup  S)$ and $\mathcal{U}=\mathcal{T}\cap{^{\perp}}S$.
   \item [(iii)] $S$ satisfies $$S\in\bigcap_{S'\in{\rm brick }(\mathcal{U}^{\perp}\cap \mathcal{T})}(\Sub_{\Theta}(S')\cap\Fac_{\Theta}(S')).$$
  \end{itemize}
Moreover, the brick $S$ is unique up to isomorphism.
\end{theorem D}

Building on Theorem ${\rm  D}$,
we naturally extend the concept of brick labeling (see Definition \ref{brick})
to the Hasse quiver of
$\tors_{\Theta}(\mathcal{A})$, offering a generalization of the
framework introduced in \cite[Section 3.2]{De}.
This labeling provides a systematic way to assign bricks to the intervals in the lattice
$\torf_{\Theta}(\mathcal{A})$ of torsion-free classes, enriching our understanding of its combinatorial and categorical structure. Moreover, we characterize the arrows in the Hasse quiver of
$\tors_{\Theta}(\mathcal{A})$ using the brick labeling, as detailed in Proposition \ref{3-11}. This characterization deepens the connection between the morphisms in the quiver and the corresponding torsion classes.

Finally, we extend $\tau$-tilting theories to extriangulated length categories by leveraging the structure of torsion classes. To achieve this, we introduce the concepts of support torsion classes (see Definition \ref{d-5-2}) and support $\tau$-tilting subcategories (see Definition \ref{d-5-11}) in the context of extriangulated length categories. This notion serves as a natural analog of its counterpart in classical settings and enables us to generalize the celebrated bijection of Adachi-Iyama-Reiten (cf. \cite[Theorem 2.7]{Ad}) for functorially finite torsion classes. Our generalization highlights the versatility of $\tau$-tilting theory and its applicability in the broader framework of extriangulated categories. This development not only unifies and extends the existing results but also lays the groundwork for further exploration of the interplay between torsion classes,
$\tau$-tilting theories, and extriangulated length categories. The precise statement of our generalized bijection is presented as follows.

\begin{theorem E}  \text{\rm (see Theorem \ref{main5} for details)}  Let $(\mathcal{A}, \Theta)$ be an extriangulated length category with enough $\Theta$-projectives. Then we have a
bijection between the following sets.

$(1)$ The set of support torsion classes.

$(2)$ The set of support $\tau$-tilting subcategories.
\end{theorem E}

$\mathbf{Organization.}$ This paper is organized as follows. In Section 2, we summarize the fundamental definitions and key properties of extriangulated categories and filtration categories, and provide the foundational concepts required for subsequent discussions. Section 3 introduces the notion of extriangulated length categories and establishes several basic properties that will be utilized throughout the paper. In Section 4, we develop a lattice-theoretical framework for studying the partially ordered sets of torsion classes within extriangulated length categories, offering a structured approach to analyzing their relationships. Finally, in Section 5, we introduce the concepts of support torsion classes and support $\tau$-tilting subcategories. We also establish a bijection between these two notions, thereby extending
$\tau$-tilting theories to the framework of extriangulated categories.
\vspace{2mm}

$\mathbf{Conventions~and~Notation.}$ Throughout this paper, we assume that all considered categories are skeletally small and Krull-Schmidt, and that the subcategories are full and closed under isomorphisms. For an additive category $\mathscr{C}$, we denote by $\Iso(\mathscr{C})$ the set of isomorphism class of objects in $\mathscr{C}$. Given an object $X\in\mathscr{C}$, we denote by $\add(X)$ the full subcategory of $\mathscr{C}$ consisting of finite direct sums of direct summands of $X$.  For a finite dimensional algebra $\Lambda$ over a field $k$,  we denote by $\mod\Lambda$ the category of finite dimensional $\Lambda$-modules and by $D^{b}(\Lambda)$ the bounded derived category.

\section{Preliminaries}
In this section, we review some notations and properties of extriangulated categories and filtration subcategories, as introduced in \cite{Na} and \cite{Wa}, respectively.

\subsection{Extriangulated categories}
In this subsection,  we omit the detailed axioms and definitions on extriangulated categories, and only give terminologies and properties which we shall need later. For the precise definition and the detailed properties of extriangulated categories, we refer to \cite[Section 2]{Na}.

An {\em extriangulated category} $(\mathscr{C}, \mathbb{E},\mathfrak{s})$ is a triple
consisting of the following data and satisfying certain conditions:

(1)~ $\C$ is an additive category,
 and $\mathbb{E}\colon\C^{\rm op}\times \C\rightarrow {\rm Ab}$ is
 an additive bifunctor, where {\rm Ab} is the category of abelian groups.

(2)~ $\mathfrak{s}$ is a correspondence that assigns to each
 $\delta\in \mathbb{E}(Z,X)$ an equivalence class $[X \to Y \to Z]$  of complexes
in $\C$.

Here two complexes $X\xrightarrow{~f~}Y\xrightarrow{~g~}Z$ and
$X\xrightarrow{~f'~}Y\xrightarrow{~g'~}Z$ are \emph{equivalent} if there exists an isomorphism
$h\colon Y\to Y'$ such that the following diagram commutes:
$$\xymatrix{X\ar[r]^{f}\ar@{=}[d]&Y\ar[r]^{g}\ar[d]^{h}&Z\ar@{=}[d]\\
X\ar[r]^{f'}&Y'\ar[r]^{g'}&Z.}$$
Moreover, this triplet must satisfy the axioms
(ET1)-(ET4), (ET3)$^{\rm op}$ and (ET4)$^{\rm op}$ as detailed in \cite[Definition 2.12]{Na}.
In what follows, we often write $\C$ for an extriangulated category $(\C, \mathbb{E}, \mathfrak{s})$ for simplicity.

Given $\delta\in\mathbb{E}(C,A)$. If $\mathfrak{s}(\delta)=[A\stackrel{x}{\longrightarrow}B\stackrel{y}{\longrightarrow}C]$, then the sequence $A\stackrel{x}{\longrightarrow}B\stackrel{y}{\longrightarrow}C$ is called a {\em conflation}, $x$ is called an {\em inflation} and $y$ is called a {\em deflation}. In this case, we call $$A\stackrel{x}{\longrightarrow}B\stackrel{y}{\longrightarrow}C\stackrel{\delta}\dashrightarrow$$
is an $\mathbb{E}$-triangle, and  denote $A=\cocone(y)$ and $C=\cone(x)$. For two subcategories $\mathcal{T},\mathcal{F}\subseteq\mathscr{C}$, we define
$$\mathcal{T}^{\perp}=\{M\in\mathscr{C}~|~\Hom(X,M)=0~\text{for any}~X\in\mathcal{T}\},$$
$$^{\perp}\mathcal{T}=\{M\in\mathscr{C}~|~\Hom(M,X)=0~\text{for any}~X\in\mathcal{T}\},$$
$$\mathcal{T}\ast \mathcal{F}=\{M\in \mathscr{C}~|~\text{there exists an $\mathbb{E}$-triangle}~T\stackrel{}{\longrightarrow}M\stackrel{}{\longrightarrow}F\stackrel{}\dashrightarrow~\text{with}~T\in\mathcal{T},F\in\mathcal{F} \}.$$

We say that a subcategory $\mathcal{T}\subseteq \C$ is {\em extension-closed} if $\mathcal{T}\ast \mathcal{T}\subseteq\mathcal{T}$.
In this case, $\mathcal{T}$ is an extriangulated category with extriangulated structure induced from that of $\mathscr C$ (see \cite[Remark 2.18]{Na}).

\begin{remark}
We know that both exact categories and triangulated categories are examples of extriangulated categories \cite[Example 2.13]{Na}, and that extension-closed subcategories of extriangulated categories are also extriangulated \cite[Remark 2.18]{Na}. Furthermore, there exist extriangulated categories that are neither exact nor triangulated, as demonstrated in \cite{FHZZ,  HZZ, Na, NP1, ZZ, ZhZ}.
\end{remark}

\subsection{Filtration subcategories}
In this subsection, we always assume that $\C=(\mathscr{C}, \mathbb{E},\mathfrak{s})$ is an  extriangulated category. Let $\mathcal{X}$ be a collection of objects in $\mathscr{C}$. The {\em filtration subcategory} $\mathbf{Filt_{\mathscr{C}}(\mathcal{X})}$ is consisting of all objects $M$ admitting a finite filtration of the form
\begin{equation*}
0=M_{0}\stackrel{f_{0}}{\longrightarrow}M_{1}\stackrel{f_{1}}{\longrightarrow}M_{2}{\longrightarrow}\cdots\xrightarrow{f_{n-1}}M_{n}=M
\end{equation*}
with $f_{i}$ being an inflation and $\cone(f_{i})\in\mathcal{X}$ for any $0\leq i\leq n-1$. For each object $M\in\Filt_{\mathscr{C}}(\mathcal{X})$, the minimal length of $\mathcal{X}$-{filtrations} of $M$ is called the $\mathcal{X}$-{\em length} of $M$, which is denoted by $l_{\mathcal{X}}(M)$.

The following lemma can be found in \cite{Wa}, which is useful in the sequel.

\begin{proposition}\label{p-1-1} Let $\mathcal{X}$ be a collection of objects in $\mathscr{C}$ and set $\mathcal{A}:=\Filt_{\mathscr{C}}(\mathcal{X})$.

$(1)$ For any $\mathbb{E}$-triangle $A\stackrel{}{\longrightarrow}B\stackrel{}{\longrightarrow}C\stackrel{}\dashrightarrow$ in $\mathcal{A}$, we have $l_{\mathcal{X}}(B)\leq l_{\mathcal{X}}(A)+l_{\mathcal{X}}(C)$.

$(2)$ $\mathcal{A}$ is the smallest extension-closed subcategory containing $\mathcal{X}$ in $\mathscr{C}$.

$(3)$ For any $M\in \mathcal{A}$, there exist two $\mathbb{E}$-triangles $X_{1}\stackrel{}{\longrightarrow}M\stackrel{}{\longrightarrow}M'\stackrel{}\dashrightarrow$ and $M''\stackrel{}{\longrightarrow}M\stackrel{}{\longrightarrow}X_{2}\stackrel{}\dashrightarrow$ in $\mathcal{A}$ such that $X_{1},X_{2}\in \mathcal{X}$ and $l_{\mathcal{X}}(M')=l_{\mathcal{X}}(M'')=l_{\mathcal{X}}(M)-1$.
\end{proposition}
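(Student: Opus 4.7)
The plan is to prove the three parts in order, with (1) as the technical core and (2)--(3) following from it together with careful bookkeeping of minimal filtrations.

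For (1), I would start from minimal $\mathcal{X}$-filtrations $0 = A_0 \to \cdots \to A_m = A$ and $0 = C_0 \to \cdots \to C_n = C$, and build an $\mathcal{X}$-filtration of $B$ of length $m+n$ by stacking them. The second half comes from pulling back the $\mathbb{E}$-triangle $A \to B \to C \dashrightarrow$ along each inflation $C_i \to C$ to obtain a compatible tower $A = B_0 \to B_1 \to \cdots \to B_n = B$ of $\mathbb{E}$-triangles $A \to B_i \to C_i \dashrightarrow$. Applying (ET4) to the composition $C_{i-1} \to C_i \to C$ (combined with the pullback property) then supplies an inflation $B_{i-1} \to B_i$ with $\cone(B_{i-1} \to B_i) \cong \cone(C_{i-1} \to C_i) \in \mathcal{X}$. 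Concatenating the filtration of $A$ with the chain $B_0 \to B_1 \to \cdots \to B_n$ yields the desired $\mathcal{X}$-filtration of $B$, and so $l_{\mathcal{X}}(B) \leq m+n = l_{\mathcal{X}}(A) + l_{\mathcal{X}}(C)$.

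Part (2) is then formal. The inclusion $\mathcal{X} \subseteq \mathcal{A}$ is immediate from length-one filtrations, and (1) shows that $\mathcal{A}$ is extension-closed, since any $\mathbb{E}$-triangle with endpoints in $\mathcal{A}$ has middle term of finite $\mathcal{X}$-length. For minimality, any extension-closed subcategory $\mathcal{B} \supseteq \mathcal{X}$ of $\mathscr{C}$ must contain every object admitting an $\mathcal{X}$-filtration by a routine induction on filtration length.

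For (3), I would fix a minimal $\mathcal{X}$-filtration $0 \to M_1 \to \cdots \to M_n = M$ with $n = l_{\mathcal{X}}(M)$ and take $X_1 = M_1 \in \mathcal{X}$. Using (ET4) repeatedly to compose the inflations $M_1 \to M_2 \to \cdots \to M_n$ produces a single inflation $M_1 \to M$ sitting in an $\mathbb{E}$-triangle $M_1 \to M \to M' \dashrightarrow$, where the octahedral data exhibit an $\mathcal{X}$-filtration of $M'$ of length $n-1$. Hence $l_{\mathcal{X}}(M') \leq n-1$; equality follows from (1), since $l_{\mathcal{X}}(M') \leq n-2$ would force $l_{\mathcal{X}}(M) \leq 1 + (n-2) < n$, contradicting minimality. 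The second $\mathbb{E}$-triangle is obtained dually by peeling off the last step of the filtration: set $X_2 = \cone(M_{n-1} \to M_n) \in \mathcal{X}$ and $M'' = M_{n-1}$, and argue symmetrically to conclude $l_{\mathcal{X}}(M'') = n-1$.

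The main obstacle lies in (1) and in the composition step of (3): one must verify that the repeated use of (ET4) and its pullback counterpart assembles the intermediate terms $B_i$ (respectively, the composed inflation $M_1 \to M$) into legitimate $\mathbb{E}$-triangles with the predicted cones. Once these octahedral-type diagrams are carefully laid out, each of (1) and (3) reduces to a straightforward length count, and (2) follows essentially by inspection.
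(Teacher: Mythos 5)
The paper does not actually prove Proposition~\ref{p-1-1}; it simply cites the three statements from the reference \cite{Wa}, so there is no in-paper argument to compare yours against. Your blind reconstruction is essentially correct and matches the standard way one establishes these facts. The only imprecision is in the axiom bookkeeping for~(1): what produces the inflation $B_{i-1}\to B_i$ with cone $\cone(C_{i-1}\to C_i)$ is not (ET4) applied to the composition $C_{i-1}\to C_i\to C$, but rather $\rm (ET4)^{op}$ applied to the two composable deflations $B_i\twoheadrightarrow C_i\twoheadrightarrow X_i$ (where $X_i=\cone(C_{i-1}\to C_i)$); this yields the diagram
$$\xymatrix{
A \ar@{=}[d]\ar[r] & B_{i-1}\ar[d]\ar[r] & C_{i-1}\ar[d]\\
A \ar[r] & B_i\ar[d]\ar[r] & C_i\ar[d]\\
 & X_i\ar@{=}[r] & X_i }$$
whose middle column is the required conflation, and whose top row realizes $j_i^*\delta_i$ so that its middle term agrees with your pullback $B_{i-1}$ up to isomorphism. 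With that correction your chain $A=B_0\to\cdots\to B_n=B$ of inflations with cones in $\mathcal{X}$ is legitimate, and the rest of~(1), as well as~(2) and the length count in~(3) by repeated (ET4) plus the reverse inequality via~(1), go through exactly as you describe. You flagged this octahedral verification as the delicate point, which is fair; once you name the right axiom the argument is complete.
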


\begin{proof}
The statements can be found as follows: (1) in \cite[Lemma 2.7]{Wa}, (2) in \cite[Lemma 2.8]{Wa}, and (3) in \cite[Lemma 2.9]{Wa}.
\end{proof}

Recall that an object $M\in\mathscr{C}$ is called a {\em brick}
 if its endomorphism ring is a division ring.
 A set $\mathcal{X}$ of isomorphism classes of bricks in $\mathscr{C}$
 is called a {\em semibrick} if $\Hom_{\mathscr{C}}(X_1,X_2)=0$
  for any two non-isomorphic objects $X_1,X_2$ in $\mathcal{X}$.
A semibrick $\mathcal{X}$ is said to be {\em proper} if for any $X\in\mathcal{X}$,
there does not exist an $\mathbb{E}$-triangle of the form $X\stackrel{}{\longrightarrow}0\stackrel{}{\longrightarrow}N\stackrel{}\dashrightarrow$ with $N\in\Filt_{\mathcal{A}}(\mathcal{X})$.
It is straightforward to verify that the notions of semibricks and proper semibricks coincide in exact categories. However, they differ in triangulated categories (see \cite[Example 4.2]{Wa2}). We summarize the following basic properties.

\begin{proposition}\label{p-1-2}  Let $\mathcal{X}$ be a semibrick in $\mathscr{C}$ and set $\mathcal{A}:=\Filt_{\mathscr{C}}(\mathcal{X})$.

$(1)$ $\mathcal{A}$ is closed under direct summands.

$(2)$ If $A=B\oplus C\in\mathcal{A}$, then $l_{\mathcal{X}}(A)=l_{\mathcal{X}}(B)+l_{\mathcal{X}}(C)$.

$(3)$ If $f:X\rightarrow M$ is a non-zero morphism in $\mathcal{A}$ with $X\in\mathcal{X}$, then $f$ is an inflation and $l_{\mathcal{X}}(\cone(f))=l_{\mathcal{X}}(M)-1$. Dually,  if $g:M\rightarrow X$ is a non-zero morphism in $\mathcal{A}$ with $X\in\mathcal{X}$, then $g$ is a deflation and $l_{\mathcal{X}}(\cocone(g))=l_{\mathcal{X}}(M)-1$.

$(4)$  $\mathcal{X}$ is a proper semibrick if and only if $l_{\mathcal{X}}(B)=l_{\mathcal{X}}(A)+l_{\mathcal{X}}(C)$ for any $\mathbb{E}$-triangle $A\stackrel{}{\longrightarrow}B\stackrel{}{\longrightarrow}C\stackrel{}\dashrightarrow$ in $\mathcal{A}$.
\end{proposition}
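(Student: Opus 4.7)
I plan to prove the four parts in the order $(3)\Rightarrow (1) \Rightarrow (2) \Rightarrow (4)$, since $(3)$ is the pivotal structural ingredient on which the rest depends.

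For $(3)$, I would induct on $n = l_{\mathcal{X}}(M)$. The base case $n = 1$ reads: $M = Y \in \mathcal{X}$, and since $\mathcal{X}$ is a semibrick and $f \neq 0$ we must have $X \cong Y$; then $f$ is a non-zero endomorphism of a brick, hence an isomorphism — in particular an inflation with zero cone, so the length equation holds trivially. For $n > 1$, invoke Proposition \ref{p-1-1}(3) to obtain an $\mathbb{E}$-triangle $M'' \to M \xrightarrow{p} Y \dashrightarrow$ with $Y \in \mathcal{X}$ and $l_{\mathcal{X}}(M'') = n-1$, and split on the composite $p \circ f : X \to Y$. If $p \circ f = 0$, the long exact sequence associated to the $\mathbb{E}$-triangle lets $f$ lift to a non-zero $f' : X \to M''$; by induction $f'$ is an inflation with $l_{\mathcal{X}}(\cone f') = n-2$, and axiom (ET4) exhibits $f$ as a composition of inflations sitting in an $\mathbb{E}$-triangle $\cone(f') \to \cone(f) \to Y \dashrightarrow$, which combined with Proposition \ref{p-1-1}(1) yields $l_{\mathcal{X}}(\cone f) = n-1$. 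If $p \circ f \neq 0$, the base-case argument gives $X \cong Y$ with $p \circ f$ an isomorphism; hence $p$ admits a section, the $\mathbb{E}$-triangle splits as $M \cong M'' \oplus Y$, $f$ becomes a section of $p$, and $\cone(f) \cong M''$ has length $n-1$. The dual statement for $g : M \to X$ follows via (ET4)$^{\mathrm{op}}$.

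For $(1)$, let $A \oplus B \in \mathcal{A}$ and induct on $n = l_{\mathcal{X}}(A \oplus B)$; the base cases $n = 0, 1$ are immediate since elements of $\mathcal{X}$ are indecomposable (a brick has no non-trivial idempotents). In the inductive step, apply Proposition \ref{p-1-1}(3) to produce an $\mathbb{E}$-triangle $X \xrightarrow{f} A \oplus B \to M' \dashrightarrow$ with $X \in \mathcal{X}$ and $l_{\mathcal{X}}(M') = n-1$; writing $f = (f_A, f_B)$, if one component, say $f_A$, vanishes, then $f$ factors through the split inclusion $B \hookrightarrow A \oplus B$, and a $3 \times 3$-diagram forces $M' \cong A \oplus M_B$ where $X \to B \to M_B \dashrightarrow$ is an $\mathbb{E}$-triangle; by induction applied to the summands of $M'$, both $A$ and $M_B$ lie in $\mathcal{A}$, whence $B \in \mathcal{X} \ast M_B \subseteq \mathcal{A}$ by Proposition \ref{p-1-1}(2). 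The case where both $f_A$ and $f_B$ are non-zero is reduced to the previous one by absorbing one component into the other via an appropriate automorphism of $A \oplus B$ together with the brick property of $X$, which splits off a copy of $X$ from one summand. Part $(2)$ then follows: Proposition \ref{p-1-1}(1) applied to the split triangle $B \to B \oplus C \to C \dashrightarrow$ gives $l_{\mathcal{X}}(B \oplus C) \leq l_{\mathcal{X}}(B) + l_{\mathcal{X}}(C)$, and the reverse inequality is a short induction on $l_{\mathcal{X}}(B \oplus C)$ using $(3)$ to peel off an element of $\mathcal{X}$ from whichever summand contains it.

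For $(4)$, the direction $(\Leftarrow)$ is immediate: additivity applied to a hypothetical $\mathbb{E}$-triangle $X \to 0 \to N \dashrightarrow$ with $X \in \mathcal{X}$ would give $0 = 1 + l_{\mathcal{X}}(N)$, a contradiction, so $\mathcal{X}$ must be proper. For $(\Rightarrow)$, I would induct on $l_{\mathcal{X}}(A)$ along a given triangle $A \to B \to C \dashrightarrow$: peel off $X \in \mathcal{X}$ from $A$ via Proposition \ref{p-1-1}(3), use the octahedral axiom to pass to a smaller triangle, and invoke properness at each step to rule out any collapse of the form $X \to 0$ that would corrupt the length bookkeeping. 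The main obstacle in the whole argument is the splitting step in the non-zero case of $(3)$: one must verify in the extriangulated setting that a deflation admitting a section yields a genuinely split $\mathbb{E}$-triangle and that the inflation $f$ is canonically identified with the resulting section. Once this is in hand, together with the basic ET4/long-exact-sequence toolkit, all remaining parts proceed by routine inductive bookkeeping.
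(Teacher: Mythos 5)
Note first that the paper does not prove Proposition~\ref{p-1-2} itself; it cites \cite{Wa} for parts (1)--(3) and \cite{Wa2} for part (4), so there is no in-paper argument to compare against. Your proof of part (3) is carefully done and correct (the lifting across the long exact sequence, the (ET4) bookkeeping, and the use of the fact that a deflation with a section splits the $\mathbb{E}$-triangle are all sound). However, there is a genuine gap in your proof of part (1), precisely in the case you dismiss most quickly: when both components $f_A,f_B$ of the inflation $f=\binom{f_A}{f_B}:X\rightarrowtail A\oplus B$ are non-zero. You claim this is ``reduced to the previous one by absorbing one component into the other via an appropriate automorphism of $A\oplus B$ together with the brick property of $X$, which splits off a copy of $X$ from one summand.'' Such an absorption requires a morphism $h:B\to A$ (or $A\to B$) with $hf_B=-f_A$, and the brick property of $X$ gives no such thing: a non-zero morphism out of a brick is not a split mono in general, so $X$ has no reason to be a direct summand of $A$ or $B$, and the transvection-type automorphism you want need not exist. (Already in $\mathrm{mod}\,\Lambda$ with $X$ simple, $X\hookrightarrow A\oplus B$ can have both projections non-zero and non-split.) Since parts (2) and (4)($\Rightarrow$) of your argument lean on (1), and your (4)($\Rightarrow$) is in any case very sketchy (you do not address what happens when the inflation $A\to B$, or its restriction to a peeled-off $X\rightarrowtail A$, is zero, which is exactly the situation properness must rule out), the gap in (1) propagates.

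A workable repair of the ``both non-zero'' case that stays within the tools you already set up: the composite $\iota_A f_A:X\to A\oplus B$ is non-zero with target in $\mathcal{A}$, so your part (3) makes it an inflation; since $\mathscr{C}$ is Krull--Schmidt and hence weakly idempotent complete, the cancellation property for inflations (the extriangulated analogue of (WIC)) forces $f_A:X\to A$ itself to be an inflation, say with cone $A'$, and (ET4) applied to $X\xrightarrow{f_A}A\xrightarrow{\iota_A}A\oplus B$ together with the splitting of $A\to A\oplus B\to B$ shows $\cone(\iota_Af_A)\cong A'\oplus B$, which has $l_{\mathcal{X}}$-length $n-1$ by (3); then induction gives $A',B\in\mathcal{A}$ and extension-closedness gives $A\in\mathcal{A}$. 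Without invoking weak idempotent completeness (or some substitute for it) explicitly, the step from ``$\iota_Af_A$ is an inflation'' to ``$f_A$ is an inflation'' is not automatic, and your proof omits this entirely. I'd suggest you make this dependence explicit, and similarly flesh out the zero-composite cases in (4).
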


\begin{proof} The first two statements can be found in \cite[Lemma 5.4]{Wa},  (3) in \cite[Lemma 3.5 and Corollary 3.6]{Wa} and (4) in  \cite[Lemma 3.9]{Wa2}.
\end{proof}

\begin{remark}\label{r-1-3}
Let $\mathcal{X}$ be a semibrick in $\mathscr{C}$ and set $\mathcal{A}:=\Filt_{\mathscr{C}}(\mathcal{X})$. By Proposition \ref{p-1-1} and Proposition \ref{p-1-2}, any object $M\in\mathcal{A}$ has at least one non-zero inflation $X\rightarrow M$ and one non-zero deflation $M\rightarrow X'$ with $l_{\mathcal{X}}(X)=l_{\mathcal{X}}(X')=1$.
\end{remark}

\section{Extriangulated length categories}
In this section, we introduce the concept of extriangulated length categories and establish their fundamental properties. We prove that extriangulated length categories correspond precisely to
those categories arising from simple-minded systems. Notably, many significant categories in representation theory fall within the framework of extriangulated length categories, highlighting their broad applicability and importance. Throughout the remainder of this section, we fix an extriangulated category
$(\mathcal{A}, \mathbb{E},\mathfrak{s})$ as the basis for our discussion.

 \subsection{Definitions and basic properties}

  We start with introducing the following notations.

\begin{definition}\label{length} We say that a map $\Theta:\Iso(\mathcal{A})\rightarrow \mathbb{N}$ is a {\em length function} on $\mathcal{A}$ if it satisfies the following conditions:
\begin{itemize}
   \item [(1)] $\Theta(X)=0$ if and only if $X\cong0$.
   \item [(2)] For any $\mathbb{E}$-triangle $X\stackrel{}{\longrightarrow}L\stackrel{}{\longrightarrow}M\stackrel{\delta}\dashrightarrow$ in $\mathcal{A}$, we have $\Theta(L)\leq \Theta(X)+\Theta(M)$. In addition, if $\delta=0$, then $\Theta(L)=\Theta(X)+\Theta(M)$.
  \end{itemize}
A length function $\Theta$ is said to {\em stable} if
it satisfies the condition $\Theta(L)=\Theta(X)+\Theta(M)$ for any $\mathbb{E}$-triangle $X\stackrel{}{\longrightarrow}L\stackrel{}{\longrightarrow}M\stackrel{}\dashrightarrow$ in $\mathcal{A}$.
\end{definition}

 We provide an elementary example of length functions.

\begin{example}Let $\mathcal{C}$ be a Krull-Schmidt category. One can regard $\mathcal{C}$ as an exact category with a split exact structure, i.e., a conflation in   $\mathcal{C}$ is a kernel-cokernel pair of the form $A\rightarrowtail A\oplus B\twoheadrightarrow B$. Then we have a stable length function $\Theta:\Iso(\mathcal{C})\rightarrow \mathbb{N}$ such that $\Theta(M)=|M|$ for any $M\in \mathcal{C}$, where $|M|$ denotes the number of non-isomorphic indecomposable direct summands of $M$.
\end{example}

 A semibrick $\mathcal{X}$ in $\mathcal{A}$ is called a {\em simple-minded system}  if $\mathcal{A}=\Filt_{\mathcal{A}}(\mathcal{X})$ (see \cite[Definition 5.10]{Wa}).
  Simple-minded systems also serve as a source of length functions, as demonstrated by the following lemma.

\begin{lemma}\label{WL} Let $\mathcal{X}$ be a simple-minded system in $\mathcal{A}$. Then $l_{\mathcal{X}}$ is a length function on $\mathcal{A}$. In addition, $l_{\mathcal{X}}$ is stable if and only if $\mathcal{X}$ is proper.
\end{lemma}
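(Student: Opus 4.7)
The plan is to read off both claims from the toolbox that Propositions \ref{p-1-1} and \ref{p-1-2} have already built up, since $\mathcal{X}$ being a simple-minded system means $\mathcal{A}=\Filt_{\mathcal{A}}(\mathcal{X})$, so every object of $\mathcal{A}$ has a well-defined $\mathcal{X}$-length. The verification of Definition \ref{length} will be split into (1) the zero condition, (2a) the subadditivity, (2b) the additivity when the defining class vanishes, and finally the stability statement.

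First I would handle condition (1) of Definition \ref{length}: by construction an $\mathcal{X}$-filtration of length $0$ is just the zero object, so $l_{\mathcal{X}}(M)=0$ forces $M\cong 0$, and conversely $l_{\mathcal{X}}(0)=0$. For the inequality in condition (2), given any $\mathbb{E}$-triangle $X\to L\to M\stackrel{\delta}{\dashrightarrow}$ in $\mathcal{A}$, Proposition \ref{p-1-1}(1) directly gives $l_{\mathcal{X}}(L)\le l_{\mathcal{X}}(X)+l_{\mathcal{X}}(M)$.

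Next I would deal with the equality clause when $\delta=0$. Here I invoke the standard fact about extriangulated categories that an $\mathbb{E}$-triangle whose extension class vanishes splits, so $L\cong X\oplus M$. Since $\mathcal{X}$ is a semibrick, Proposition \ref{p-1-2}(2) is applicable and yields $l_{\mathcal{X}}(L)=l_{\mathcal{X}}(X)+l_{\mathcal{X}}(M)$, completing the verification that $l_{\mathcal{X}}$ is a length function.

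For the ``if and only if'' statement about stability, I would just unwind the definitions: $l_{\mathcal{X}}$ being stable means that $l_{\mathcal{X}}(B)=l_{\mathcal{X}}(A)+l_{\mathcal{X}}(C)$ holds for \emph{every} $\mathbb{E}$-triangle $A\to B\to C\dashrightarrow$ in $\mathcal{A}$, which is precisely the characterization of properness provided by Proposition \ref{p-1-2}(4). The main potential obstacle is the splitting step for $\delta=0$, since this is the only ingredient not immediately contained in the two cited propositions; all the other work has been pre-packaged. Beyond that, the proof is essentially an exercise in citing the right lemma at the right time.
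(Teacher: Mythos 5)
Your proposal is correct and takes the same approach as the paper, which simply cites Propositions \ref{p-1-1} and \ref{p-1-2}; you have usefully filled in the details that the paper leaves implicit. Your flagged concern about the splitting step for $\delta=0$ is a non-issue: in any extriangulated category $\mathfrak{s}(0)$ is realized by the split complex $X\to X\oplus M\to M$, so $\delta=0$ forces $L\cong X\oplus M$ and Proposition \ref{p-1-2}(2) applies.
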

\begin{proof} This immediately follows from Proposition \ref{p-1-1} and Proposition \ref{p-1-2}.
\end{proof}

%We introducing the follow notations with respect to a length function.
\begin{definition}\label{Length}  Let $\Theta$ be a length function on $\mathcal{A}$. We say that an $\mathbb{E}$-triangle $$X\stackrel{x}{\longrightarrow}L\stackrel{y}{\longrightarrow}M\stackrel{\delta}\dashrightarrow$$ in $\mathcal{A}$ is {\em $\Theta$-stable} (or {\em stable} when $\Theta$ is clear from the context) if $\Theta(L)=\Theta(X)+\Theta(M)$. In this case,  $x$ is called a {\em $\Theta$-inflation}, $y$ is called a {\em $\Theta$-deflation} and $\delta$ is called a  {\em $\Theta$-extension}. If there is no confusion, we depict $x$ by the symbol $X\rightarrowtail L$ and $y$ by $L\twoheadrightarrow M$. %When it is clear from the context, we write standard instead of $\Theta$-standard.
\end{definition}
%When it is clear from the context, we write standard instead of $\Theta$-standard.
%When we want to emphasize the length function $\Theta$, we write standard $\mathbb{E}_{\Theta}$-triangle.

We have the following useful characterizations of stable $\mathbb{E}$-triangles.
\begin{lemma}\label{L2-4}  Let  $\Theta$  be a  length function on $\mathcal{A}$. For an $\mathbb{E}$-triangle $$\eta:~~X\stackrel{f}{\longrightarrow}L\stackrel{g}{\longrightarrow}M\dashrightarrow$$
 in $\mathcal{A}$,  the following statements hold:

$(1)$ $f$ is a $\Theta$-inflation if and only if $g$ is a $\Theta$-deflation.

$(2)$ If $\eta$ is stable, then $f=0$ if and only if $X\cong 0$.

$(2')$  If $\eta$ is stable, then $g=0$ if and only if $M\cong 0$.

$(3)$ If $\eta$ is stable, then $f$ is an isomorphism if and only if $\Theta(M)=0$.

$(3')$  If $\eta$ is stable, then $g$ is an isomorphism if and only if $\Theta(X)=0$.
\end{lemma}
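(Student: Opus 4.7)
The plan is to reduce each of (1)--(3$'$) to a short diagram chase in the Hom/$\mathbb{E}$ long exact sequence attached to $\eta$ (available in every extriangulated category by \cite{Na}), combined with the additivity of $\Theta$ on split $\mathbb{E}$-triangles built into Definition \ref{length}(2). Statement (1) is immediate from Definition \ref{Length}: each of ``$f$ a $\Theta$-inflation'', ``$g$ a $\Theta$-deflation'' and ``$\eta$ stable'' unfolds to the same equality $\Theta(L)=\Theta(X)+\Theta(M)$.

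For (2) the nontrivial direction is $f=0\Rightarrow X\cong 0$ under the stability hypothesis. I plan to apply $\Hom(-,L)$ to $\eta$ and read off the exact sequence
\[
\Hom(M,L)\xrightarrow{g^{*}}\Hom(L,L)\xrightarrow{f^{*}}\Hom(X,L).
\]
Since $f^{*}=0$, the map $g^{*}$ will be surjective, producing $r\colon M\to L$ with $rg=1_{L}$; hence $g$ is a split monomorphism and, since $\mathcal{A}$ is Krull-Schmidt, idempotents split to give a decomposition $M\cong L\oplus L'$ together with the split $\mathbb{E}$-triangle $L\rightarrowtail M\twoheadrightarrow L'\dashrightarrow 0$. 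Applying Definition \ref{length}(2) to this split triangle gives $\Theta(M)=\Theta(L)+\Theta(L')$; comparing with the stability equation $\Theta(L)=\Theta(X)+\Theta(M)$ forces $\Theta(X)+\Theta(L')=0$, whence $\Theta(X)=0$ and $X\cong 0$ by Definition \ref{length}(1). Statement (2$'$) will be entirely symmetric: from $g=0$ and the long exact sequence arising from $\Hom(L,-)$ one deduces that $f$ is a split epimorphism, $X\cong L\oplus X'$, and the stability equation then forces $\Theta(M)=0$.

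For (3), if $\Theta(M)=0$ then $M\cong 0$, and $\eta$ reduces to $X\xrightarrow{f}L\to 0\dashrightarrow\delta$ with $\delta\in\mathbb{E}(0,X)=0$; this is the split $\mathbb{E}$-triangle and forces $f$ to be an isomorphism. Conversely, assuming $f$ is an isomorphism, I plan to apply $\Hom(M,-)$ to $\eta$; both $f_{*}\colon\Hom(M,X)\to\Hom(M,L)$ and $f_{*}\colon\mathbb{E}(M,X)\to\mathbb{E}(M,L)$ will be bijective, and exactness will force $g_{*}=0$ and $\delta_{*}=0$. The portion $\im(g_{*})=\Ker(\delta_{*})$ of the sequence then sandwiches $\Hom(M,M)$ between $0$ and $0$, yielding $1_{M}=0$ and so $M\cong 0$. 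Statement (3$'$) is dual, obtained from $\Hom(-,X)$ and the bijectivity of $g^{*}$ in both the Hom and the $\mathbb{E}$ rows. No step looks genuinely hard; the only subtlety is the invocation of Krull-Schmidt in (2) and (2$'$), which upgrades a split mono (resp.\ epi) to a direct-sum decomposition so that the additive clause of Definition \ref{length}(2) can be played against the stability hypothesis to eliminate $\Theta(X)$ (resp.\ $\Theta(M)$).
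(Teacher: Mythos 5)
Your proof is correct and matches what the paper leaves implicit. The paper's own justification is just the one-line reference to Definition~\ref{Length} and \cite[Corollary 3.5]{Na}; your write-up supplies the elided diagram chases — the Hom/$\mathbb{E}$ exact sequence to produce a section or retraction in (2) and (2$'$), the Krull-Schmidt splitting together with the $\delta=0$ clause of Definition~\ref{length}(2) played against the stability equation, and the short exact-sequence chases for (3) and (3$'$).
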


\begin{proof} These follow from Definition \ref{Length} and \cite[Corollary 3.5]{Na}.
\end{proof}

\begin{lemma}\label{L-1}Let  $\Theta$  be a length function on $\mathcal{A}$. Consider the following commutative diagrams of $\mathbb{E}$-triangles:
\begin{equation}\label{COM}
\begin{array}{l}
\xymatrix{
 A \ar@{=}[d] \ar[r]^-{f} & B  \ar[d]^-{g} \ar[r]^-{} & C  \ar[d] \ar@{-->}[r]^-{}  &  \\
  A\ar[r]^-{} & E\ar[d]^{} \ar[r]^{i} &  D\ar[d]^{j} \ar@{-->}[r]^-{}  &  \\
  &  F \ar@{-->}[d]^-{}  \ar@{=}[r]^{} &F\ar@{-->}[d]^-{} \\
  && }
\end{array}
\end{equation}

$(1)$ If $f$ and $g$ are $\Theta$-inflations, then each  $\mathbb{E}$-triangle in {\rm (\ref{COM})} is stable.

$(2)$ If $i$ and $j$ are $\Theta$-deflations, then each  $\mathbb{E}$-triangle in {\rm(\ref{COM})} is  stable.
\end{lemma}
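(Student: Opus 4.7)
The plan is to reduce everything to the subadditivity inequality for $\Theta$ from Definition \ref{length}(2), applied to each of the four $\mathbb{E}$-triangles appearing in diagram \eqref{COM}, and then to collapse the resulting chains of inequalities into equalities using the hypotheses and Lemma \ref{L2-4}(1).

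For part (1), I would first translate the hypotheses: by Lemma \ref{L2-4}(1), saying that $f$ and $g$ are $\Theta$-inflations is equivalent to saying that the top row $A\to B\to C\dashrightarrow$ and the middle column $B\to E\to F\dashrightarrow$ are already $\Theta$-stable. This yields $\Theta(B)=\Theta(A)+\Theta(C)$ and $\Theta(E)=\Theta(B)+\Theta(F)$, so that $\Theta(E)=\Theta(A)+\Theta(C)+\Theta(F)$. Applying Definition \ref{length}(2) to the two remaining $\mathbb{E}$-triangles, namely the middle row $A\to E\to D\dashrightarrow$ and the rightmost column $C\to D\to F\dashrightarrow$, gives $\Theta(E)\le\Theta(A)+\Theta(D)$ and $\Theta(D)\le\Theta(C)+\Theta(F)$. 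Concatenating, one obtains
\[
\Theta(E)\le\Theta(A)+\Theta(D)\le\Theta(A)+\Theta(C)+\Theta(F)=\Theta(E),
\]
which forces both inequalities to be equalities. Hence the middle row and the rightmost column are also stable, completing part (1).

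Part (2) will be handled by the dual argument. The hypotheses on $i$ and $j$ translate via Lemma \ref{L2-4}(1) into stability of the middle row and the rightmost column, giving $\Theta(E)=\Theta(A)+\Theta(D)$ and $\Theta(D)=\Theta(C)+\Theta(F)$, whence $\Theta(E)=\Theta(A)+\Theta(C)+\Theta(F)$. Applying the length-function inequality to the top row and the middle column then yields $\Theta(B)\le\Theta(A)+\Theta(C)$ and $\Theta(E)\le\Theta(B)+\Theta(F)$, and the same chain $\Theta(E)\le\Theta(B)+\Theta(F)\le\Theta(A)+\Theta(C)+\Theta(F)=\Theta(E)$ forces both inequalities to be equalities, yielding stability of the top row and the middle column. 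There is no genuine obstacle here: the whole argument is a bookkeeping exercise with the subadditivity of $\Theta$, the only subtle point being the use of Lemma \ref{L2-4}(1) to switch freely between being a $\Theta$-inflation and being a $\Theta$-deflation within a single $\mathbb{E}$-triangle.
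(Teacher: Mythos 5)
Your proof is correct and follows essentially the same route as the paper: translate the hypotheses on $f$ and $g$ (resp.\ $i$ and $j$) into stability of the top row and middle column (resp.\ middle row and rightmost column), then sandwich $\Theta(E)$ between itself using subadditivity on the remaining two $\mathbb{E}$-triangles, forcing equality. The paper writes the chain of inequalities in the opposite direction and only sketches part (2) as dual, but the argument is the same.
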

\begin{proof}  We  only prove the first statement and the second one can be shown similarly. First of all, by Lemma \ref{L2-4}, $\Theta(A)+\Theta(C)= \Theta(B)$ and $\Theta(B)+\Theta(F)= \Theta(E)$. To see that $A\stackrel{}{\longrightarrow}E\stackrel{}{\longrightarrow}D\dashrightarrow$  is stable, we calculate:
\begin{flalign*} \Theta(E) &=\Theta(B)+\Theta(F)=\Theta(A)+\Theta(C)+\Theta(F)\geq\Theta(A)+\Theta(D)\geq\Theta(E).
\end{flalign*}
The preceding argument gives $\Theta(A)+\Theta(D)=\Theta(E)$.  A similar calculation shows that $\Theta(C)+\Theta(F)= \Theta(D)$.
\end{proof}

\begin{definition}\label{L-2-6} Let  $\Theta$  be a  length function on $\mathcal{A}$. A morphism $f:M\rightarrow N$ in $\mathcal{A}$ is called {\em $\Theta$-admissible} if $f$ admits a {\em $\Theta$-decomposition $(i_{f}, X_{f},j_{f})$}, i.e. there is a commutative diagram
 $$\xymatrix{
   M\ar[rr]^{f}  \ar@{->>}[dr]_{i_{f}}
                &  &   N     \\
                & X_{f}  \ar@{>->}[ur]_{j_{f}}                }
$$
such that $i_{f}$ is a $\Theta$-deflation and $j_{f}$ is a $\Theta$-inflation.
\end{definition}

This motivates the following definition.

\begin{definition}\label{Def} Let  $\Theta$  be a  length function on $\mathcal{A}$. We say that $((\mathcal{A}, \mathbb{E},\mathfrak{s}),\Theta)$ is an  {\em  extriangulated length category},  or simply a {\em length category}, if every morphism in $\mathcal{A}$ is $\Theta$-admissible.
For simplicity, we often write  $(\mathcal{A},\Theta)$ for  $((\mathcal{A}, \mathbb{E},\mathfrak{s}),\Theta)$ when $\mathbb{E}$ and $\mathfrak{s}$ are clear from the context.
\end{definition}
We will see later in Section 3.5 that abelian length categories and bounded derived categories of finite dimensional algebras with finite global dimension are  length categories. The following theorem provides a method to obtaining length categories.

\begin{theorem}\label{main1} Let $\mathcal{X}$ be a simple-minded system in $\mathcal{A}$. Then $(\mathcal{A},l_{\mathcal{X}})$ is a length category.
\end{theorem}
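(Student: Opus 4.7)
By Lemma \ref{WL}, $l_{\mathcal{X}}$ is already known to be a length function on $\mathcal{A}$, so the entire content of the theorem is the assertion that every morphism $f\colon M \to N$ in $\mathcal{A}$ admits a $\Theta$-decomposition in the sense of Definition \ref{L-2-6}. The plan is to imitate the classical argument in an abelian length category (where $f$ factors through its image as $M \twoheadrightarrow \mathrm{Im}(f) \rightarrowtail N$) by induction on $n := l_{\mathcal{X}}(M) + l_{\mathcal{X}}(N)$. The base case $n=0$ and the degenerate case $f=0$ are trivial, so the real work is in the inductive step with $f \neq 0$.

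For the inductive step I would apply Proposition \ref{p-1-1}(3) to obtain a stable $\mathbb{E}$-triangle $X \xrightarrow{u} M \xrightarrow{v} M' \dashrightarrow$ with $X \in \mathcal{X}$ and $l_{\mathcal{X}}(M') = l_{\mathcal{X}}(M) - 1$, and then split the argument according to whether the composite $fu$ vanishes. If $fu = 0$, the long exact sequence of $\Hom(-,N)$ attached to this triangle yields $f = hv$ for some $h\colon M' \to N$, and since $l_{\mathcal{X}}(M') + l_{\mathcal{X}}(N) < n$, the induction hypothesis gives $h = j_h i_h$. The required decomposition of $f$ is then $f = j_h \circ (i_h v)$, provided one checks that the composite of two $\Theta$-deflations is a $\Theta$-deflation; this follows by applying axiom (ET4)$^{\rm op}$ to the pair $(v, i_h)$ to produce a $3 \times 3$ diagram and invoking Lemma \ref{L-1}(2).

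If instead $fu \neq 0$, then Proposition \ref{p-1-2}(3) identifies $fu\colon X \to N$ as a $\Theta$-inflation, so it extends to a stable $\mathbb{E}$-triangle $X \xrightarrow{fu} N \xrightarrow{p} N' \dashrightarrow$ with $l_{\mathcal{X}}(N') = l_{\mathcal{X}}(N) - 1$. Axiom (ET3) supplies a morphism $f'\colon M' \to N'$ making $(\mathrm{id}_X, f, f')$ a morphism of $\mathbb{E}$-triangles, and the induction hypothesis decomposes $f' = j' i'$ through some object $Y$. The candidate image $X_f$ would then be constructed as the pullback of $p$ along $j'\colon Y \to N'$: this yields an $\mathbb{E}$-triangle $X \to X_f \to Y \dashrightarrow$ and a morphism $X_f \to N$, whose stability and $\Theta$-inflation character respectively follow from the accompanying $3 \times 3$ diagram together with Lemma \ref{L-1}(2) applied to the two $\Theta$-deflations $p$ and $N' \twoheadrightarrow \cone(j')$.

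The step I expect to be the main obstacle is the final one: producing a morphism $g\colon M \to X_f$ and verifying that it is a $\Theta$-deflation, so that $f = j_f \circ g$ (with $j_f\colon X_f \rightarrowtail N$ the composite inflation) is the desired factorisation. Existence of $g$ should follow from the compatibility of extension classes $(i')^{*} \eta_{X_f} = \eta_M$, which in turn is immediate from the pullback formula together with the identity $\eta_M = (f')^{*} \eta_N$ recorded in the morphism of $\mathbb{E}$-triangles $(\mathrm{id}_X, f, f')$. To conclude that $g$ is itself a $\Theta$-deflation, I would complete the morphism $(\mathrm{id}_X, g, i')$ into a $3 \times 3$ diagram whose two known rows are the stable triangles $X \to M \to M'$ and $X \to X_f \to Y$; since $i'$ is a $\Theta$-deflation by induction and $\mathrm{id}_X$ trivially is one, a final application of Lemma \ref{L-1}(2) will force the middle column to be stable, so $g$ is a $\Theta$-deflation and the induction closes.
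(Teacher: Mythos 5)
Your proposal follows the paper's strategy quite closely in its setup — the same induction on $l_{\mathcal{X}}(M)+l_{\mathcal{X}}(N)$, the same split into $fu=0$ and $fu\neq 0$, the same object (your pullback $X_f$ is exactly the object the paper calls $M'$ in diagram \eqref{3.14}, obtained from $(\mathrm{ET}4)^{\rm op}$ applied to the two inflations $X\rightarrowtail -$ and $-\rightarrowtail N$). The $fu=0$ branch is fine and matches the paper's Case 1. However, the final step of your argument has a genuine gap that the paper's proof is structured precisely to avoid.

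You acknowledge that the obstacle is showing $g\colon M\to X_f$ is a $\Theta$-deflation, and propose to do this by completing $(\mathrm{id}_X,g,i')$ into a $3\times 3$ diagram and invoking Lemma~\ref{L-1}(2). This is circular: Lemma~\ref{L-1} takes as \emph{input} a diagram in which all rows and columns are already $\mathbb{E}$-triangles and concludes only their $\Theta$-stability; it does not produce the conflation structure, and you cannot form the bottom row/column of the desired $3\times 3$ until you already know $g$ is a deflation. More fundamentally, in a general extriangulated category it is simply not true that a morphism of $\mathbb{E}$-triangles $(\mathrm{id}_X,g,i')$ with $i'$ a deflation forces $g$ to be a deflation — the Mayer--Vietoris $\mathbb{E}$-triangle $M\to X_f\oplus M'\to Y$ gives a deflation out of $X_f\oplus M'$, but a component of a deflation need not be a deflation. (In abelian categories this follows from the snake lemma, which is exactly the sort of tool unavailable here.) There is also a smaller issue you do not address: the $g$ you get from $(i')^*\delta_{X_f}=\delta_M$ only controls $x_f g = i'v$ and $gu$; you still have to arrange $j_f g = f$, which requires the weak-pullback factorisation rather than a bare application of (ET3).

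The paper circumvents all of this: it does \emph{not} claim the analogue of your $g$ (their $l\colon M\to M'$) is a $\Theta$-deflation. Instead it observes $l_{\mathcal{X}}(M)+l_{\mathcal{X}}(M')\leq l_{\mathcal{X}}(M)+l_{\mathcal{X}}(N)$ and, when this is strict, applies the induction hypothesis to $l$ itself, then composes the resulting $\Theta$-inflation with the known $\Theta$-inflation $f'\colon M'\rightarrowtail N$ via Lemma~\ref{L-1}(1). The boundary case $l_{\mathcal{X}}(M')=l_{\mathcal{X}}(N)$ — where the induction would not make progress — is then handled by a separate chain of length comparisons (forcing $e_2$ to be an isomorphism, then $t$ a $\Theta$-deflation, then further splitting on whether $l_{\mathcal{X}}(N)=l_{\mathcal{X}}(M)$, and yet another $(\mathrm{ET}4)$ application in the final subcase). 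These extra subcases are the price the paper pays for never having to prove $g$ is a deflation directly; your proposal, by trying to prove that directly, runs into an obstruction that your cited lemmas cannot overcome.
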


\begin{proof} By Lemma \ref{WL}, it suffices to show that each morphism in $\Filt_{\mathcal{A}}(\mathcal{X})$ is $l_{\mathcal{X}}$-admissible. To prove this, we take a morphism $f:M\rightarrow N$ in  $\Filt_{\mathcal{A}}(\mathcal{X})$. The case of $f=0$ is trivial. Hence we always assume that $f$ is a nonzero morphism. We proceed the proof by induction on $l_{\mathcal{X}}(M)+l_{\mathcal{X}}(N)$.

The case of $l_{\mathcal{X}}(M)=1$ or $l_{\mathcal{X}}(N)=1$ follows from Proposition \ref{p-1-2}. Now we consider the general case.
By Proposition \ref{p-1-1}, we have an $l_{\mathcal{X}}$-stable $\mathbb{E}$-triangle of the form
$$X\stackrel{a}\rightarrowtail M\stackrel{b}\twoheadrightarrow \cone(a)\stackrel{}\dashrightarrow$$
with $X\in \mathcal{X}$ and $l_{\mathcal{X}}(\cone(a))=l_{\mathcal{X}}(M)-1$. We consider two cases:

$(\mathbf{Case~1})$  $fa=0$. In this case, there exists a morphism $b':\cone(a)\rightarrow N$ such that $f=b'b$, i.e., we have a commutative diagram
\begin{equation*}
\xymatrix{
   M\ar[rr]^{f}  \ar@{->>}[dr]_{b}
                &  &   N     \\
                & \cone(a)  \ar[ur]_{b'}                }
\end{equation*}
Note that $b'\neq0$ and $l_{\mathcal{X}}(\cone(a))=l_{\mathcal{X}}(M)-1$. By induction hypothesis, there exists
an $l_{\mathcal{X}}$-decomposition $(i_{a'},X_{b'},j_{b'})$ of $b'$. Thus $(i_{a'}b,X_{b'},j_{b'})$ is an $l_{\mathcal{X}}$-decomposition of $f$ by Lemma \ref{L-1}.

$(\mathbf{Case~2})$  $fa\neq0$. In this case, by Proposition \ref{p-1-2}, we have a commutative diagram
\begin{equation}\label{10-24}
\begin{array}{l}
\xymatrix{
  X~\ar@{=}[d] \ar@{>->}[r]^-{a} & M\ar[d]_-{f} \ar@{->>}[r]^-{b} & \cone(a) \ar[d]_-{t} \ar@{-->}[r]^-{}  &  \\
  X~ \ar@{>->}[r]^-{fa} & N \ar@{->>}[r]^-{c} & \cone(fa) \ar@{-->}[r]^-{}&  }
\end{array}
\end{equation}
with $X\in \mathcal{X}$ and $l_{\mathcal{X}}(\cone(fa))=l_{\mathcal{X}}(N)-1$. If $t=0$,  there exists a morphism $f':M\rightarrow X$ such that $f=faf'$. Note that $X\in \mathcal{X}$ and $f'\neq0$. Thus  $f'$ is an
$l_{\mathcal{X}}$-deflation and thus $(f',X,fa)$ is an $l_{\mathcal{X}}$-decomposition of $f$.

Next, we focus on the case of $t\neq0$. Since $l_{\mathcal{X}}(\cone(a))+l_{\mathcal{X}}(\cone(fa))<l_{\mathcal{X}}(M)+l_{\mathcal{X}}(N)$, by induction hypothesis, there exists an $l_{\mathcal{X}}$-decomposition
as follows:
\begin{equation*}
\xymatrix{
   \cone(a)\ar[rr]^{t}  \ar@{->>}[dr]_{e_{1}}
                &  &   \cone(fa)     \\
                & T  \ar@{>->}[ur]_-{e_{2}}                }
\end{equation*}
Applying $\rm (ET4)^{op}$ yields the following  commutative diagram
\begin{equation}\label{3.14}
\begin{array}{l}
\xymatrix{
    M \ar@/_/[ddr]_{f} \ar@/^/[drr]^{e_{1}b}
    \ar@{.>}[dr]|-{l}  &  &   &  \\
  X~ \ar@{=}[d] \ar@{>->}[r] &M' \vphantom{\big|} \ar@{>->}[d]_-{f'}
  \ar@{->>}[r]^-{h'} & T\vphantom{\big|}  \ar@{>->}[d]_-{e_{2}}  &  \\
  X~ \ar@{>->}[r]^-{fa} & N\ar@{->>}[r]^-{c}\ar@{->>}[d]^-{} & \cone(fa)\ar@{->>}[d]^-{}  &  \\
 & \cone(e_{2})\ar@{=}[r] & \cone(e_{2}). & }
\end{array}
\end{equation}
Note that each conflation in (\ref{3.14}) is stable by Lemma \ref{L-1}. On the other hand, since $e_{2}e_{1}b=tb=cf$, there exists a morphism $l:M\rightarrow M'$ such that $f=f'l$ and $e_{1}b=h'l$.
Clearly $l_{\mathcal{X}}(M')=1+l_{\mathcal{X}}(T)\leq 1+\cone(fa)=l_{\mathcal{X}}(N)$.  If $l_{\mathcal{X}}(M')<l_{\mathcal{X}}(N)$, then $l_{\mathcal{X}}(M)+l_{\mathcal{X}}(M')<l_{\mathcal{X}}(M)+l_{\mathcal{X}}(N)$. By  induction hypothesis, this assertion holds for $l$ and so is $f$.

Suppose that $l_{\mathcal{X}}(M')=l_{\mathcal{X}}(N)$. Then $l_{\mathcal{X}}(T)=l_{\mathcal{X}}(\cone(fa))$ and thus $e_{2}$ is an isomorphism. In this case, $t\cong e_{1}$ is an $l_{\mathcal{X}}$-deflation and thus $$l_{\mathcal{X}}(N)=1+\cone(fa)\leq1+\cone(a)=l_{\mathcal{X}}(M).$$
If $l_{\mathcal{X}}(N)= l_{\mathcal{X}}(M)$, then $l_{\mathcal{X}}(\cone(fa))= l_{\mathcal{X}}(\cone(a))$ and thus $t$ is a isomorphism. Applying \cite[Corollary 3.6]{Na} to the diagram (\ref{10-24}), we deduce that $f$ is an isomorphism. It is easy to see that $f$ is $l_{\mathcal{X}}$-admissible. It remains to prove the case of $l_{\mathcal{X}}(N)<l_{\mathcal{X}}(M)$. Since $t\cong e_{1}$ is an $l_{\mathcal{X}}$-deflation, so is $tb=cf$. Consider the following commutative diagram
\begin{equation*}
\xymatrix{
  \cocone(cf)~\ar@{.>}[d]^{s} \ar@{>->}[r]^-{m} & M\ar[d]_-{f} \ar@{->>}[r]^-{cf} & \cone(fa) \ar@{=}[d] \ar@{-->}[r]^-{}  &  \\
  X~ \ar@{>->}[r]^-{fa} & N \ar@{->>}[r]^-{c} & \cone(fa) \ar@{-->}[r]^-{\theta}&.  }
\end{equation*}
If $s=0$, then there exists a morphism $k:\cone(fa)\rightarrow N$ such that $f=kcf$. Since $l_{\mathcal{X}}(\cone(fa))<l_{\mathcal{X}}(M)$, by induction hypothesis, $k$ is  $l_{\mathcal{X}}$-admissible and so is $f$. If $s\neq0$, then $s$ is an $l_{\mathcal{X}}$-deflation.  Applying $\rm (ET4)$ yields the following  commutative diagram
\begin{equation*}
\xymatrix{
 \cocone(s)~ \ar@{=}[d] \ar@{>->}[r] &  \cocone(cf) \vphantom{\big|} \ar@{>->}[d]^-{m} \ar@{->>}[r]^-{s} & X\vphantom{\big|}   \ar@{>->}[d]^{d} \\
  \cocone(s)~\ar@{>->}[r] & M\ar@{->>}[d]^{cf} \ar@{->>}[r]^{t} &  N' \ar@{->>}[d] \\
  &  \cone(fa)  \ar@{=}[r] & \cone(fa). }
\end{equation*}
Since $fas=fm$, there exists a morphism $q:N'\rightarrow N$ such that $f=qt$ and $fa=qd$. Note that $l_{\mathcal{X}}(N')=1+l_{\mathcal{X}}(\cone(fa))=l_{\mathcal{X}}(N)<l_{\mathcal{X}}(M)$. Since $$l_{\mathcal{X}}(N')+l_{\mathcal{X}}(N)<l_{\mathcal{X}}(N)+l_{\mathcal{X}}(M),$$ by the induction hypothesis, this assertion holds for $q'$ and so is $f$.
\end{proof}

\begin{example}\label{E-3-10} The most elementary example in representation theory is the finitely generated module category $\mod \Lambda$ for a finite dimensional algebra $\Lambda$ over an algebraically closed field $k$. Denote by $\mathcal{S}$ the set of isomorphism classes of simple $\Lambda$-modules, which is a simple-minded system in $\mod \Lambda$. By using Theorem \ref{main1}, we know that $(\mod \Lambda,l_{\mathcal{S}})$ is a stable length category. For a $\Lambda$-module $M$, $l_{\mathcal{S}}(M)$ is just the length of composition series of $M$. It is clear that the same argument holds for abelian length categories. For more examples of length categories, see Section 3.5.
\end{example}

\subsection{Extriangulated length categories and simple-minded systems} For a
 length category  $(\mathcal{A},\Theta)$, we  define
$$\Theta_{1}:=\{M\in\Iso(\mathcal{A})~|~\Theta(M)\leq \Theta(N)~\text{for any}~N\in\mathcal{A}\}.$$
Without loss of generality, we always assume that $\Theta(M)=1$ for any $M\in\Theta_{1}$.  For $n\geq2$, we  inductively define various sets  as follows:
$$\Theta'_{n}=\{M\in \mathcal{A}~|~M\in\Theta_{n-1}^{\perp}\bigcap{^{\perp}}\Theta_{n-1},\Theta(M)=n\}~\text{and}~\Theta_{n}=\Theta_{n-1}\bigcup\Theta'_{n}.$$
Put $\Theta_{\infty}=\bigcup_{n\geq1}\Theta_{n}$. We have the following basic observation.

\begin{lemma}\label{L-3-77}  Let $(\mathcal{A},\Theta)$ be a  length category.  Suppose that $f:M\rightarrow N$ is a non-zero morphism in $\mathcal{A}$. Consider the following commutative diagram
$$
\xymatrix{
  P \ar@{->>}[r]^{h} & M\ar@{->>}[dr]_{i_{f}} \ar[rr]^{f} & &N~ \ar@{>->}[r]^{g} & Q\\
   &   &  X_{f}.\ar@{>->}[ur]_{j_{f}} &  &  }
$$
Set $\Theta(P)=p$, $\Theta(M)=m$, $\Theta(N)=n$, $\Theta(Q)=q$ and $\Theta(X_{f})=s$.

$(1)$ If $\Theta(M)=1$, then $f$ is a $\Theta$-inflation. If $\Theta(N)=1$, then $f$ is a $\Theta$-deflation. In particular, $\Theta_{1}$ is a semibrick.

$(2)$ Suppose that $s<m$. If $P\in \Theta'_{p}$, then  $2\leq s< m$ and  $X_{f}\notin \Theta'_{s}$.

$(2')$ Suppose that $s<n$. If $Q\in \Theta'_{q}$, then  $2\leq s< n$ and  $X_{f}\notin \Theta'_{s}$.

$(3)$ Suppose that $s<\min\{m,n\}$. The following statements hold.

{\rm(i)}~If $P\in \Theta'_{p}$, then $Q\notin \Theta'_{q}$.
{\rm(ii)}~If $Q\in \Theta'_{q}$, then $P\notin \Theta'_{p}$.
\end{lemma}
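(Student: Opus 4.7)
The plan is to exploit the two stable $\mathbb{E}$-triangles with common middle term $X_f$ that accompany the $\Theta$-decomposition $f=j_f\circ i_f$; by Lemma \ref{L-1} each such triangle is $\Theta$-stable, yielding the numerical identities $m=p+s$ and $n=s+q$. These identities, together with the orthogonality built into the definition of $\Theta'_n$ and the long exact sequences obtained by applying $\Hom(-,-)$ to conflations, will supply all the ingredients needed for the four assertions.

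For (1), a direct numerical argument suffices. If $\Theta(M)=1$, then $p+s=1$; since $f\neq 0$ forces $i_f\neq 0$ and hence $X_f\not\cong 0$, we must have $s=1$ and $p=0$, so $P\cong 0$ and Lemma \ref{L2-4}(3') identifies $i_f$ as an isomorphism, making $f=j_f\circ i_f$ a $\Theta$-inflation. The case $\Theta(N)=1$ is dual. For the semibrick property of $\Theta_1$, any nonzero $f:M\to N$ with $M,N\in\Theta_1$ is simultaneously a $\Theta$-inflation and a $\Theta$-deflation, and stability of the corresponding triangles together with Lemma \ref{L2-4} forces $\cone(f)\cong\cocone(f)\cong 0$, so $f$ is an isomorphism; this gives both the brick property of each object in $\Theta_1$ and the vanishing of $\Hom$ between non-isomorphic ones.

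For (2), and dually for (2'), the condition $P\in\Theta'_p$ gives $p\geq 2$, while $f\neq 0$ gives $s\geq 1$. To rule out $s=1$, assume for contradiction that $X_f\in\Theta_1\subseteq\Theta_{p-1}$; then the defining orthogonality $P\in{}^{\perp}\Theta_{p-1}\cap\Theta_{p-1}^{\perp}$ forces $\Hom(P,X_f)=\Hom(X_f,P)=0$, so the long exact sequences obtained by applying $\Hom(X_f,-)$ and $\Hom(-,X_f)$ to the triangle $P\to M\to X_f\dashrightarrow\delta$, together with the brick property of $X_f$ from (1), make the triangle split; the resulting $M\cong P\oplus X_f$ contradicts the orthogonality of $P$ against $X_f\in\Theta_{p-1}$. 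The statement $X_f\notin\Theta'_s$ is obtained by the same method: assuming $X_f\in\Theta'_s$, the mutual orthogonality between $P$ and $X_f$ relative to $\Theta_{p-1}$ and $\Theta_{s-1}$ (whichever stratum is larger contains the other object) contradicts the non-triviality of the extension class defining the triangle.

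For (3), both (i) and (ii) are symmetric. Assuming simultaneously $P\in\Theta'_p$ and $Q\in\Theta'_q$, we combine the two triangles through their shared middle object $X_f$: the orthogonality of $P$ relative to $\Theta_{p-1}$ and of $Q$ relative to $\Theta_{q-1}$ forces the vanishing of certain $\Hom$-groups involving $X_f$ and objects of $\Theta_{\min(p-1,q-1)}$ that is incompatible with the non-zero factorization $f=j_f\circ i_f$. The main obstacle is the technical case-analysis in (2), especially the handling of $s=1$, where the interplay among the extension class $\delta$, the brick property of $X_f$, and the two-sided orthogonality of $P$ must be tracked carefully; once (2) and (2') are in hand, (3) reduces to a combined application, with (1) serving both as the base case and as the source of the brick property used throughout.
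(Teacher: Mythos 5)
Your argument collapses at the very first step. You assert the numerical identities $m=p+s$ and $n=s+q$, but these are false and in fact impossible: since $h:P\twoheadrightarrow M$, $i_f:M\twoheadrightarrow X_f$ are $\Theta$-deflations and $j_f:X_f\rightarrowtail N$, $g:N\rightarrowtail Q$ are $\Theta$-inflations, the stable triangles give $p=\Theta(\cocone(h))+m$, $m=\Theta(\cocone(i_f))+s$, $n=s+\Theta(\cone(j_f))$, $q=n+\Theta(\cone(g))$, hence $1\leq s\leq m\leq p$ and $1\leq s\leq n\leq q$ (this is exactly the opening line of the paper's proof). Your identity $m=p+s$ would force $s\leq 0$. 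You appear to have confused $P$ and $Q$ with the cocone of $i_f$ and cone of $j_f$; these are different objects, and in particular $P$ and $Q$ are not determined by $f$. Because of this, your argument for (1) ``derives'' $P\cong 0$, which is simply not true and not what (1) claims.

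The second structural error is in (2): there is no $\mathbb{E}$-triangle of the form $P\to M\to X_f\dashrightarrow$, so your plan to apply $\Hom(X_f,-)$ and $\Hom(-,X_f)$ to that triangle and invoke a splitting argument has no object to act on. The correct tool (which is what the paper uses) is the composite $i_fh:P\to X_f$, which is a $\Theta$-deflation by Lemma \ref{L-1}; since $s\geq1$ and the corresponding triangle is stable, Lemma \ref{L2-4} gives $i_fh\neq 0$. If $s=1$ then $X_f\in\Theta_1\subseteq\Theta_{p-1}$, so $P\in{}^{\perp}\Theta_{p-1}$ forces $i_fh=0$, contradiction. Likewise if $X_f\in\Theta'_s$ then $X_f\in\Theta_s\subseteq\Theta_{p-1}$ (using $s\leq p-1$, which follows from $s<m\leq p$), giving the same contradiction. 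No long exact sequence or extension-class argument is needed; it is a direct $\Hom$-vanishing.

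Finally, part (3) is the genuinely hard part of the lemma, and your sketch does not engage with it. You reduce it to ``a combined application'' of (2) and (2'), but (2) and (2') alone do not rule out $P\in\Theta'_p$ and $Q\in\Theta'_q$ holding simultaneously. The paper's proof of (3) requires picking a nonzero morphism $a$ between $X_f$ and some object $X\in\Theta_{s-1}$ (possible since $X_f\notin\Theta'_s$ and $s\geq 2$), taking its $\Theta$-decomposition, and then running an induction on $p+q$ with a separate base case $P\in\Theta'_3$; the stable triangles for $a$ are composed with $gj_f$ or $i_fh$ to produce the contradiction inside the inductive step. You have also inverted where the difficulty lies: (2) is a few lines, whereas (3) needs a real argument. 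As written, the proposal does not constitute a proof of (2), (2'), or (3).
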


\begin{proof} Clearly $1\leq s\leq m\leq p$ and $1\leq s\leq n\leq q$.

 $(1)$ If $\Theta(M)=1$, then $\Theta(X_{f})=1$. Note that  Lemma \ref{L2-4} implies that $f\cong j_{f}$ is a $\Theta$-inflation. Dually, if $\Theta(N)=1$, then $f\cong i_{f}$ is a $\Theta$-deflation. Again by  Lemma \ref{L2-4}, this implies $\Theta_{1}$ is a semibrick.

$(2)$ Firstly, note that $i_{f}h$ is a $\Theta$-deflation by Lemma \ref{L-1}. Since $s\geq1$, we have $i_{f}h\neq0$  by Lemma \ref{L2-4}.
If $s=1$, then $X_{f}\in\Theta_{1}$ and thus $i_{f}h=0$, this is a  contradiction. Hence, we get $s>1$. If $X_{f}\in \Theta'_{s}$, then $X_{f}\in\Theta_{s}\subseteq\Theta_{p-1}$. Since $P\in{^{\perp}}\Theta_{p-1}$, we obtain $i_{f}h=0$, a contradiction.
 Similarly, we can prove $(2')$.

 $(3)$  Suppose that $P\in \Theta'_{p}$ or $Q\in \Theta'_{q}$. By (2) and $(2')$, we have $2\leq s<\min\{m,n\}$ and  $X_{f}\notin \Theta'_{s}$. We have two cases:

$(\mathbf{Case~1})$. There exists a non-zero morphism $a:X\rightarrow X_{f}$ such that $X\in\Theta_{s-1}$.

$(\mathbf{Case~2})$.  There exists a non-zero morphism  $a:X_{f}\rightarrow X$ such that $X\in\Theta_{s-1}$.

In both cases, we take a $\Theta$-decomposition $(i_{a},X_{a},j_{a})$ of $a$ with $\Theta(X_{a})=t$. Clearly $t\leq\Theta(X)\leq s-1$. In particular, either $X\in\Theta_{1}$ or $X\in\Theta'_{\Theta(X)}$.

We divide the proof into two steps.

 $\underline{\mathbf{Step~1}.}$ Suppose that $P\in \Theta'_{3}$. We claim that $Q\notin \Theta'_{q}$. Indeed, we have $s=2$ and then $X\in\Theta_{1}$. In (Case 1), by (1),  $a$ is a $\Theta$-inflation and so is $gj_{f}a$. If $Q\in \Theta'_{q}$, then $gj_{f}a=0$ and thus $X\cong0$, this is a contradiction.
  In (Case 2),  $a$ is a $\Theta$-deflation and so is $ai_{f}h$. Since $P\in \Theta'_{3}$, we get $ai_{f}h=0$, this is a contradiction. Similarly,  if $Q\in \Theta'_{3}$, then $P\notin \Theta'_{p}$.

 $\underline{\mathbf{Step~2}.}$ We proceed the proof of (3) by  induction on $p+q$. It suffices to consider the case of $p+q\geq6$ with $p\geq3$ and $q\geq3$. The case of $p+q=6$ follows from Step 2. Now consider the general case.  Assume that  $P\in \Theta'_{p}$. In (Case 1), we obtain the following diagram:
 $$
\xymatrix{
  X \ar@{=}[r]^{1} & X\ar@{->>}[dr]_{i_{a}} \ar[rr]^{a} & &X_{f}~  \ar@{>->}[r]^{gj_{f}} & Q\\
   &   &  X_{a}.\ar@{>->}[ur]_{j_{a}} &  &  }
$$
If $t=1$, then $X_{a}\in\Theta(X)$. In this case, if $Q\in \Theta'_{q}$, then $gj_{f}j_{a}=0$, this is a  contradiction. Assume that $t>1$. If $\Theta(X)=\Theta(X_{a})$, then $X\cong X_{a}$ and hence $gj_{f}a$ is a $\Theta$-inflation. Recall that $X\in\Theta'_{\Theta(X)}$ and $\Theta(X)\leq s-1< q$. If $Q\in \Theta'_{q}$, then $gj_{f}a=0$,  this is a  contradiction. Suppose that $\Theta(X_{a})<\Theta(X)$. Since $\Theta(X)+\Theta(Q)\leq s-1+q<p+q$, by induction, we get $Q\notin\Theta'_{q}$. In (Case 2), we obtain the following diagram:
$$
\xymatrix{
 P\ar@{->>}[r]^{i_{f}h} & X_{f}\ar@{->>}[dr]_{i_{a}} \ar[rr]^{a} & &X  \ar@{=}[r]^{1} & X\\
   &   &  X_{a}\ar@{>->}[ur]_{j_{a}} &  &  }
$$
If $t=1$, then $i_{a}i_{f}h=0$, this is a  contradiction. If $\Theta(X_{a})=\Theta(X)$, then $a\cong i_{a}$ is a $\Theta$-deflation and so is $ai_{f}h$.  This is a contradiction, since $P\in \Theta'_{p}$. Hence, we have
$1<t\leq\Theta(X_{f})\leq s-1$. Since $\Theta(P)+\Theta(X)\leq p+s-1<p+q$, by induction, we get $X\notin\Theta'_{\Theta(X)}$. This is a contradiction. Using the analogous arguments  used as above, we can prove  $P\notin \Theta'_{p}$ if  $Q\in \Theta'_{p}$.
\end{proof}

\begin{lemma}\label{L11-11}  Let $(\mathcal{A},\Theta)$ be a length category.  Take a morphism $f:M\rightarrow N$ in $\mathcal{A}$ with $M\in\Theta_{m}$ and $N\in\Theta_{n}$.

$(1)$ If $m=n=1$, then either $f=0$ or $f$ is an isomorphism.

$(2)$  If $m=1$ and $n\geq2$, then $f=0$.

$(3)$ If $n=1$ and $m\geq2$, then $f=0$.

$(4)$ If $n\geq2$ and $m\geq2$, then either $f=0$ or $f$ is an isomorphism.
\end{lemma}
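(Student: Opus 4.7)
The plan is to dispatch the four cases by exploiting the orthogonality built into the definitions of $\Theta_1$ and $\Theta'_k$, with cases (1)--(3) being short consequences of those definitions and case (4) requiring a precise application of Lemma \ref{L-3-77}. Throughout I read ``$M\in\Theta_m$'' in the non-trivial cases as ``$M\in\Theta'_m$'' when $m\geq 2$ (and simply $M\in\Theta_1$ when $m=1$), which is the only reading that gives the dichotomy meaningful content. For (1), Lemma \ref{L-3-77}(1) tells us that $\Theta_1$ is a semibrick, so $\Hom(M,N)=0$ when $M\not\cong N$ while $\End(M)$ is a division ring otherwise; in either situation $f=0$ or $f$ is an isomorphism. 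For (2), the inclusion $\Theta_1\subseteq\Theta_{n-1}$ combined with $N\in{^{\perp}}\Theta_{n-1}$ (from $N\in\Theta'_n$) forces $\Hom(M,N)=0$, and (3) is dual via $\Theta_1\subseteq\Theta_{m-1}$ and $M\in\Theta_{m-1}^{\perp}$.

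For (4), suppose $M\in\Theta'_m$, $N\in\Theta'_n$ with $m,n\geq 2$ and $f\neq 0$. I would choose a $\Theta$-decomposition $(i_f,X_f,j_f)$ of $f$ and set $s=\Theta(X_f)$; note that $s\leq\min\{m,n\}$ holds automatically, since the stable $\mathbb{E}$-triangles associated to the $\Theta$-deflation $i_f$ and the $\Theta$-inflation $j_f$ give $\Theta(X_f)\leq\Theta(M)$ and $\Theta(X_f)\leq\Theta(N)$. The key move is to invoke Lemma \ref{L-3-77}(3) applied directly to $f$ with the trivial choices $P=M$ and $Q=N$, using identity morphisms as a $\Theta$-deflation and $\Theta$-inflation: if $s<\min\{m,n\}$, then $M\in\Theta'_m=\Theta'_p$ together with $N\in\Theta'_n=\Theta'_q$ contradicts that lemma. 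Hence $s=\min\{m,n\}$. Assuming WLOG $m\leq n$, so $s=m$, the stable $\mathbb{E}$-triangle $\cocone(i_f)\to M\to X_f\dashrightarrow$ has $\Theta(\cocone(i_f))=0$, and Lemma \ref{L2-4}(3$'$) makes $i_f$ an isomorphism, reducing $f$ (up to iso) to the $\Theta$-inflation $j_f\colon M\to N$. If $m<n$, then $M\in\Theta_m\subseteq\Theta_{n-1}$ while $N\in\Theta_{n-1}^{\perp}$, forcing $j_f=0$ and contradicting $f\neq 0$; so $m=n$, the triangle $X_f\to N\to\cone(j_f)\dashrightarrow$ has $\Theta(\cone(j_f))=0$, and Lemma \ref{L2-4}(3) makes $j_f$ also an isomorphism, whence $f=j_f\circ i_f$ is an isomorphism.

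The main obstacle is spotting that Lemma \ref{L-3-77}(3) can be applied to the morphism $f$ itself rather than to some auxiliary morphism, by taking $P=M$ and $Q=N$ with identity deflation and inflation so that $p=m$ and $q=n$ in the ambient diagram. Once this pinching step forces $s=\min\{m,n\}$, the remainder of case (4) is a routine alternation between the two stable triangles attached to the $\Theta$-decomposition, Lemma \ref{L2-4} collapsing each half of $f$ to an isomorphism, and the orthogonality conditions defining $\Theta'_k$ ruling out the remaining off-diagonal possibility $m<n$.
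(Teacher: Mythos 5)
Your proof is correct and follows essentially the same route as the paper: case (1) via the semibrick conclusion of Lemma \ref{L-3-77}(1), cases (2)--(3) via the orthogonality built into $\Theta'_n$, and case (4) by applying Lemma \ref{L-3-77}(3) with $P=M$, $Q=N$ and identity deflation/inflation to force $\Theta(X_f)=\min\{m,n\}$, then pinching with Lemma \ref{L2-4} and the orthogonality to get $m=n$ and $f$ an isomorphism (the paper's proof compresses this into ``Lemma \ref{L-3-77} implies $m=\Theta(X_f)=n$'').

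Two small points to correct. In case (2) you should invoke $N\in\Theta_{n-1}^{\perp}$ (not $N\in{^{\perp}}\Theta_{n-1}$) together with $M\in\Theta_1\subseteq\Theta_{n-1}$ to kill $\Hom(M,N)$; dually, in case (3) you need $M\in{^{\perp}}\Theta_{m-1}$ (not $M\in\Theta_{m-1}^{\perp}$) together with $N\in\Theta_{m-1}$. Both orthogonality conditions do hold because $\Theta'_k\subseteq\Theta_{k-1}^{\perp}\cap{^{\perp}}\Theta_{k-1}$, so nothing breaks, but the ones you cited point the $\Hom$-vanishing in the wrong direction. Also, ``WLOG $m\leq n$'' is not literally a WLOG since $f:M\to N$ is directed; the case $n<m$ is not obtained by relabelling but by the dual argument ($s=n$ makes $j_f$ an isomorphism, then $N\in\Theta_{m-1}$ against $M\in{^{\perp}}\Theta_{m-1}$ kills $f$), which immediately shows that case is vacuous. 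It would be cleaner to handle $m\neq n$ up front by orthogonality and only run the Lemma \ref{L-3-77}(3) pinch when $m=n$.
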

\begin{proof} (1) It follows from Lemma \ref{L-3-77}(1).

$(2)$ Since $M\in\Theta_{1}$ and $N\in\Theta'_{n}$, we get $f=0$.

$(3)$  Since $N\in\Theta_{1}$ and $M\in\Theta'_{m}$, we get $f=0$.

$(4)$ Take a $\Theta$-decomposition $(i_{f},X_{f},j_{f})$ of $f$.  Suppose that $f\neq0$. Then Lemma \ref{L-3-77} implies that  $m=\Theta(X_{f})=n$.  By using Lemma \ref{L2-4}, we obtain that $f$ is an isomorphism.
\end{proof}

As a consequence, we have the following characterization of $\Theta_{\infty}$.
\begin{proposition}\label{main9} Let $(\mathcal{A},\Theta)$ be a length category.
 Then $\Theta_{\infty}$ is a simple-minded system in $\mathcal{A}$.
\end{proposition}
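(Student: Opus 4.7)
The plan is to verify the two defining conditions of a simple-minded system: that $\Theta_{\infty}$ is a semibrick and that $\mathcal{A}=\Filt_{\mathcal{A}}(\Theta_{\infty})$.

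For the semibrick property I will appeal directly to Lemma~\ref{L11-11}. Given any non-zero morphism $f\colon M\to N$ between $M\in\Theta_{m}$ and $N\in\Theta_{n}$ in $\Theta_{\infty}$, the four cases of that lemma collectively force $f$ to be either zero or an isomorphism. Specialising to endomorphisms shows $\End(M)$ is a division ring for every $M\in\Theta_{\infty}$, so each such $M$ is a brick. For morphisms between non-isomorphic $M,N\in\Theta_{\infty}$ (noting that objects in $\Theta'_{m}$ and $\Theta'_{n}$ with $m\neq n$ are automatically non-isomorphic, since they have distinct $\Theta$-values) the lemma forces $\Hom(M,N)=0$, so $\Theta_{\infty}$ is a semibrick.

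For the filtration property I will induct on $\Theta(M)$. The base case $\Theta(M)=1$ is immediate, since then $M\in\Theta_{1}\subseteq\Theta_{\infty}\subseteq\Filt_{\mathcal{A}}(\Theta_{\infty})$. In the inductive step, suppose $\Theta(M)=n\geq 2$. If $M\in\Theta'_{n}$ then $M$ is already in $\Theta_{\infty}$ and we are done; otherwise $M\notin\Theta_{n-1}^{\perp}\cap{}^{\perp}\Theta_{n-1}$, so after possibly passing to the dual statement there is a non-zero morphism $f\colon X\to M$ with $X\in\Theta_{n-1}$. I would then pick a $\Theta$-decomposition $(i_{f},X_{f},j_{f})$ of $f$, guaranteed by Definition~\ref{Def}, and realise the inflation $j_{f}\colon X_{f}\rightarrowtail M$ as the first map of a $\Theta$-stable $\mathbb{E}$-triangle $X_{f}\rightarrowtail M\twoheadrightarrow C\dashrightarrow$, so that $\Theta(X_{f})+\Theta(C)=n$. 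Since $j_{f}\neq 0$, Lemma~\ref{L2-4} gives $X_{f}\not\cong 0$, so $\Theta(C)<n$; and $\Theta(X_{f})\leq\Theta(X)\leq n-1<n$ because $i_{f}\colon X\twoheadrightarrow X_{f}$ is a $\Theta$-deflation from $X$. The induction hypothesis then places both $X_{f}$ and $C$ in $\Filt_{\mathcal{A}}(\Theta_{\infty})$, and extension-closure of the filtration subcategory (Proposition~\ref{p-1-1}(2)) yields $M\in\Filt_{\mathcal{A}}(\Theta_{\infty})$. The dual case, where the non-zero morphism runs $M\to Y$ with $Y\in\Theta_{n-1}$, is handled symmetrically via the deflation half of a $\Theta$-decomposition.

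The only mildly delicate point is ensuring the induction strictly reduces $\Theta$-length at each step, which reduces to the two inequalities $\Theta(X_{f})\geq 1$ and $\Theta(X_{f})\leq n-1$; both fall out of Lemma~\ref{L2-4} together with the constraint $X\in\Theta_{n-1}$. Everything else is bookkeeping, since the substantive work has already been absorbed into Lemmas~\ref{L-3-77} and~\ref{L11-11}.
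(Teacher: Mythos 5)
Your proof is correct and follows essentially the same route as the paper's: you invoke Lemma~\ref{L11-11} for the semibrick property exactly as the paper does, and for the filtration property you run the same induction on $\Theta(M)$, splitting off a piece via a $\Theta$-decomposition of a non-zero morphism to or from $\Theta_{n-1}$ and closing under extensions. Your version is slightly more explicit than the paper's (you justify the bounds $1\le\Theta(X_{f})\le n-1$ rather than simply asserting them), but the argument is the same.
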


\begin{proof} Firstly, we observe that $\Theta_{\infty}$ is a semibrick by Lemma \ref{L11-11}.  Suppose that $M\in\mathcal{A}$ with $\Theta(M)=m$. We claim that $M\in\Filt_{\mathcal{A}}(\Theta_{\infty})$. We proceed the proof by induction on $m$. The cases of $m=0,1$ are trivial. Now consider the general case. If $M\in\Theta'_{m}$, then $M\in\Theta_{\infty}$. If $M\notin\Theta'_{m}$, we have two cases:

$(\mathbf{Case~1})$. There exists a non-zero morphism $a:X\rightarrow M$ such that $X\in\Theta_{m-1}$. We take a $\Theta$-decomposition $(i_{a},X_{a},j_{a})$ for $a$ with $\Theta(X_{a})=t$. Clearly $t\leq m-1<m$ and $1\leq \Theta(\cone(j_{a}))< m$. By induction, we have $X_{a},\cone(j_{a})\in\Filt_{\mathcal{A}}(\Theta_{\infty})$. Since $\Filt_{\mathcal{A}}(\Theta_{\infty})$ is closed under extensions, we obtain $M\in\Filt_{\mathcal{A}}(\Theta_{\infty})$.

$(\mathbf{Case~2})$.  There exists a non-zero morphism  $a:M\rightarrow X$ such that $X\in\Theta_{m-1}$.  In analogy with Case 1, we have  $M\in\Filt_{\mathcal{A}}(\Theta_{\infty})$.

This shows that $\mathcal{A}\subseteq\Filt_{\mathcal{A}}(\Theta_{\infty})$.  The inclusion $\Filt_{\mathcal{A}}(\Theta_{\infty})\subseteq\mathcal{A}$ is obvious.
\end{proof}

When we say that an extriangulated category $\mathcal{A}$ is a (stable) length category, we always mean that there exists a (stable) length function $\Theta$ such that  $(\mathcal{A},\Theta)$ forms an length category. With this clarified, we can now state the main result of this section.

\begin{theorem}\label{main-L} Let $\mathcal{A}$ be an extriangulated category.  The following statements are equivalent:

$(1)$ The category $\mathcal{A}$ is a length category.

$(2)$ The category $\mathcal{A}$ has a simple-minded system.
\end{theorem}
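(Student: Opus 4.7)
The plan is to establish the equivalence by invoking the two substantial results already proved in the excerpt, namely Theorem \ref{main1} and Proposition \ref{main9}; once those are in hand, the theorem essentially follows by combining them, so the proof should be quite short.

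For the implication $(2)\Rightarrow(1)$, I would argue directly: if $\mathcal{A}$ admits a simple-minded system $\mathcal{X}$, then by definition $\mathcal{A}=\Filt_{\mathcal{A}}(\mathcal{X})$, and Theorem \ref{main1} says precisely that $(\mathcal{A},l_{\mathcal{X}})$ is an extriangulated length category. Thus choosing $\Theta:=l_{\mathcal{X}}$ exhibits $\mathcal{A}$ as a length category, and nothing further is required.

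For the implication $(1)\Rightarrow(2)$, suppose that $(\mathcal{A},\Theta)$ is a length category for some length function $\Theta$. I would then invoke the construction of the sets $\Theta_{n}$ and $\Theta_{\infty}$ introduced just before Proposition \ref{main9}. By Proposition \ref{main9}, $\Theta_{\infty}$ is a semibrick satisfying $\mathcal{A}=\Filt_{\mathcal{A}}(\Theta_{\infty})$, hence a simple-minded system in $\mathcal{A}$ by the definition recalled in Section~3.1. This directly produces the desired simple-minded system.

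Since the two directions are each a one-line appeal to a previously established result, there is essentially no obstacle left; the main conceptual work has been absorbed into Theorem \ref{main1} (which required the induction on $l_{\mathcal{X}}(M)+l_{\mathcal{X}}(N)$ to verify admissibility of morphisms) and into Proposition \ref{main9} (which required the delicate case analysis in Lemma \ref{L-3-77} and Lemma \ref{L11-11} to show that $\Theta_{\infty}$ is both a semibrick and filtration-generates $\mathcal{A}$). The only minor point worth highlighting in the write-up is the remark immediately preceding the theorem: the phrase ``$\mathcal{A}$ is a length category'' is to be understood as asserting the existence of some length function $\Theta$, so in $(1)\Rightarrow(2)$ we may fix such a $\Theta$ at the outset and then apply Proposition \ref{main9}.
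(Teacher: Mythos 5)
Your proposal is correct and takes exactly the same approach as the paper, which simply cites Theorem~\ref{main1} for the implication $(2)\Rightarrow(1)$ and Proposition~\ref{main9} for $(1)\Rightarrow(2)$. Your additional remarks spelling out why each cited result yields the needed implication (and noting the convention on what ``is a length category'' means) are accurate and consistent with the text.
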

\begin{proof} This follows from Theorem \ref{main1} and Proposition \ref{main9}.
\end{proof}

\subsection{Extriangulated length subcategories and semibricks}
Let $\mathcal{C}\subseteq \mathcal{A}$ be an extension-closed subcategory.  We define $\mathbb{E}_{\mathcal{C}}$ by the restriction of $\mathbb{E}$ onto $\mathcal{C}^{\rm op}\times \mathcal{C}$ and define $\mathfrak{s}_{\mathcal{C}}$ by restricting $\mathfrak{s}$. Then $(\mathcal{C},\mathbb{E}_{\mathcal{C}},\mathfrak{s}_{\mathcal{C}})$ is an extriangulated category (cf. \cite[Remark 2.18]{Na}).

\begin{definition}\label{Def25} Let $\mathcal{C}\subseteq \mathcal{A}$ be an extension-closed subcategory.

$(1)$ We say that $(\mathcal{C},\Theta)$ is a {\em length subcategory in $\mathcal{A}$} if  $((\mathcal{C},\mathbb{E}_{\mathcal{C}},\mathfrak{s}_{\mathcal{C}}),\Theta)$ is a length category.

$(2)$ We say that a length subcategory $(\mathcal{C},\Theta)$ in $\mathcal{A}$ is {\em length wide} if $\Theta_{1}=\Theta_{\infty}$.
\end{definition}

We say that a subcategory $\mathcal{C}$ is length (wide) if  there exists a length function $\Theta$ such that  $(\mathcal{C},\Theta)$ is a  length (wide) subcategory.

\begin{remark}\label{R-1} (1) Let $\mathcal{X}$ be a semibrick in $\mathcal{A}$. Using  Theorem \ref{main1} together with Remark \ref{r-1-3}, we can easily check that $(\Filt_{\mathcal{A}}(\mathcal{X}),l_{\mathcal{X}})$ is a length wide subcategory in $\mathcal{A}$.  Now, let $(\mathcal{C},\Theta)$ be a length subcategory in $\mathcal{A}$. By  Proposition \ref{main9}, $(\mathcal{C},l_{\Theta_{\infty}})$ is a  length wide subcategory. By this fact, a subcategory $\mathcal{C}$ is length if and only if $\mathcal{C}$ is length wide.

 $(2)$   Let $\mathcal{C}$ be a length subcategory in an exact category. Combining \cite[Exercise 8.6]{Bu} with Theorem \ref{main9}, we obtain that $\mathcal{C}$ is an abelian length category with the natural exact structure. Then by (1), it is easy to see that length subcategories in an exact category are just length wide subcategories in the sense of \cite[Definition 2.4]{En}.
 \end{remark}

Let $(\mathcal{C},\Theta)$ be a length subcategory of $\mathcal{A}$. A non-zero object $M$ in $\mathcal{C}$ is called a {\em simple} object if there does not exist a $\Theta$-stable $\mathbb{E}$-triangle $A\stackrel{}\rightarrowtail M\stackrel{}\twoheadrightarrow B\stackrel{}\dashrightarrow$ in $\mathcal{C}$ such that $A,B\neq0$. Let $\cim(\mathcal{C})$ be the set of isomorphism classes of simple objects in $(\mathcal{C},\Theta)$.  We have the following basic observation.

\begin{lemma}\label{L-3-17} Let $(\mathcal{C},\Theta)$ be a length subcategory in $\mathcal{A}$.

$(1)$ $\Theta_{1}=\Theta_{\infty}$ if and only if $\Theta=l_{\Theta_{1}}$.

$(2)$ If $(\mathcal{C},\Theta)$ is length wide, then $\Theta_{1}=\cim(\mathcal{C})$.

$(3)$ $(\mathcal{C},\Theta)$ is length wide if and only if $\Theta=l_{{\rm sim(\mathcal{C})}}$.
\end{lemma}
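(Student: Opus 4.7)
The plan is to prove (1) first, use it to establish (2), and then deduce (3) as an immediate consequence.

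For the forward direction of (1), $\Theta_{1}=\Theta_{\infty}\Rightarrow \Theta=l_{\Theta_{1}}$: Proposition \ref{main9} provides that $\Theta_{1}$ is a simple-minded system in $\mathcal{C}$, so $\mathcal{C}=\Filt_{\mathcal{C}}(\Theta_{1})$ and $l_{\Theta_{1}}$ is well-defined on $\mathcal{C}$. I would establish $\Theta=l_{\Theta_{1}}$ by induction on $m:=\Theta(M)$; the cases $m\in\{0,1\}$ are trivial, and for $m\geq 2$ the hypothesis forces $\Theta'_{m}=\emptyset$, so $M$ admits a non-zero morphism either from or to some $X\in\Theta_{m-1}=\Theta_{1}$. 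By Lemma \ref{L-3-77}(1), this morphism is a $\Theta$-inflation (respectively $\Theta$-deflation), yielding a stable $\mathbb{E}$-triangle whose remaining end has $\Theta$-value $m-1$; Proposition \ref{p-1-1}(1) combined with the induction hypothesis then gives $l_{\Theta_{1}}(M)\leq m$, while the sub-additivity of $\Theta$ along any $\Theta_{1}$-filtration supplies $\Theta(M)\leq l_{\Theta_{1}}(M)$. The converse is cleaner: if $\Theta=l_{\Theta_{1}}$, then $\Theta_{1}$ is a simple-minded system, and Remark \ref{r-1-3} supplies, for any hypothetical $M\in\Theta'_{n}$ with $n\geq 2$, a non-zero inflation from some $X\in\Theta_{1}\subseteq\Theta_{n-1}$, contradicting $M\in {^{\perp}}\Theta_{n-1}$.

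For (2), under the length wide hypothesis, (1) gives $\Theta=l_{\Theta_{1}}$. The inclusion $\cim(\mathcal{C})\subseteq\Theta_{1}$ is immediate: a non-zero simple $M$ has $\Theta(M)=l_{\Theta_{1}}(M)\geq 1$, and $\Theta(M)\geq 2$ would furnish, via Proposition \ref{p-1-1}(3), an $\mathbb{E}$-triangle $X\rightarrowtail M\twoheadrightarrow M'$ with $X,M'\neq 0$, contradicting simplicity. The reverse inclusion $\Theta_{1}\subseteq\cim(\mathcal{C})$ is the main obstacle. Given $M\in\Theta_{1}$ and a hypothetical $\mathbb{E}$-triangle $A\xrightarrow{f}M\xrightarrow{g}B\dashrightarrow$ in $\mathcal{C}$ with $A,B\neq 0$, I plan a case analysis on the vanishing of $f$ and $g$. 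If $f=0$, the long exact sequence of $\Hom(-,M)$ applied to the triangle exhibits $1_{M}$ in the image of pre-composition with $g$, so $g$ is a split monomorphism, the triangle splits, and $M\cong A\oplus B$ contradicts the brick indecomposability of $M$; the case $g=0$ is dual via $\Hom(M,-)$. In the case $f,g\neq 0$, Lemma \ref{L-3-77}(1) promotes $f$ to a $\Theta$-deflation and $g$ to a $\Theta$-inflation, producing stable $\mathbb{E}$-triangles $K\rightarrowtail A\twoheadrightarrow M$ and $M\rightarrowtail B\twoheadrightarrow C$ with $\Theta(K)=\Theta(A)-1$ and $\Theta(C)=\Theta(B)-1$. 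Since $\End(M)$ is a division ring, the image of each of $\Hom(M,A)\to\Hom(M,M)$ (post-composition with $f$) and $\Hom(B,M)\to\Hom(M,M)$ (pre-composition with $g$) is a one-sided submodule of $\End(M)$, hence either zero or the full ring; the full-ring option reproduces split epi / split mono and therefore the brick contradiction. The subcases $\Theta(A)=1$ (so $K=0$) and $\Theta(B)=1$ (so $C=0$) then force $f$ (respectively $g$) to be an isomorphism and the opposite end of the triangle to be zero, a contradiction. Hence $K,C\neq 0$, and I would proceed by induction on $\Theta(A)+\Theta(B)$, using an octahedron (ET4) with Lemma \ref{L-1} propagating stability to extract a strictly smaller non-trivial $\mathbb{E}$-triangle with $M$ in the middle. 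Verifying that the octahedral reduction remains inside $\mathcal{C}$ and strictly decreases $\Theta(A)+\Theta(B)$ is the crux of the argument.

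Part (3) is immediate from (1) and (2): length wideness gives $\Theta_{1}=\cim(\mathcal{C})$ by (2), and hence $\Theta=l_{\Theta_{1}}=l_{\cim(\mathcal{C})}$ by (1); conversely, $\Theta=l_{\cim(\mathcal{C})}$ exhibits $\cim(\mathcal{C})$ as a simple-minded system, and inspection of the definition of $\Theta_{1}$ (the objects of minimal $\Theta$-value, normalized to $1$) identifies $\Theta_{1}=\cim(\mathcal{C})$, so (1) returns length wideness.
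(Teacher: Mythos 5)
Your parts (1) and (3) essentially track the paper: for (1) the paper also inducts on $\Theta(M)$, extracting a stable $\mathbb{E}$-triangle $X\rightarrowtail M\twoheadrightarrow \cone$ with $X\in\Theta_1$ via Lemma \ref{L-3-77}(1) (the paper then applies Proposition \ref{p-1-2}(3) to get $l_{\Theta_1}(M)=1+l_{\Theta_1}(\cone)$ directly, where you use Proposition \ref{p-1-1}(1) together with sub-additivity of $\Theta$ along a filtration, but these are equivalent), and the converse is the same appeal to Remark \ref{r-1-3} (note your ``contradicting $M\in{^{\perp}}\Theta_{n-1}$'' should read ``contradicting $M\in\Theta_{n-1}^{\perp}$,'' since the map you produce goes $X\to M$). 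Part (3) is the same deduction as the paper's.

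In (2) you have inverted which inclusion is the real work, and your argument for the inclusion you regard as hard has a genuine error. The paper takes $\Theta_1\subseteq\cim(\mathcal{C})$ as immediate: with $\Theta=l_{\Theta_1}$, any $\Theta$-stable $\mathbb{E}$-triangle $A\rightarrowtail M\twoheadrightarrow B$ with $\Theta(M)=1$ gives $\Theta(A)+\Theta(B)=1$, so one end vanishes; the notion of simple here is being read against $\Theta$-stable $\mathbb{E}$-triangles. The substance in the paper is the inclusion $\cim(\mathcal{C})\subseteq\Theta_1$, done via Remark \ref{r-1-3} and Lemma \ref{L2-4} exactly as in your ``immediate'' half-sentence (so your easy direction matches the paper's hard one, and is fine).

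Your case analysis for $\Theta_1\subseteq\cim(\mathcal{C})$ does not go through. In the $f=0$ case, the long exact sequence gives that $g\colon M\to B$ is a split monomorphism, so $M$ is a direct summand of $B$; this does \emph{not} yield ``the triangle splits, and $M\cong A\oplus B$.'' Indeed if $A\ne 0$ the $\mathbb{E}$-triangle $A\xrightarrow{0}M\xrightarrow{g}B$ is never equivalent to the split triangle $A\to A\oplus B\to B$ (the first map would have to be the nonzero canonical inclusion), and in the triangulated setting one instead gets $B\cong M\oplus A[1]$, which is no contradiction to $M$ being a brick. So the $f=0$ (and dually $g=0$) branch does not close. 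The ``both nonzero'' branch you yourself flag as unresolved; and indeed the octahedral reduction you sketch has no clear reason to strictly decrease $\Theta(A)+\Theta(B)$ while staying inside $\mathcal{C}$. Once one reads ``simple'' via stable triangles, as the paper tacitly does, none of this machinery is needed and the inclusion is the one-line observation above.
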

\begin{proof}
(1) Suppose that $\Theta_{1}=\Theta_{\infty}$. Then $\Theta_{1}$ is a simple-minded system in $\mathcal{A}$ by Theorem \ref{main9}. Take an object $M\in \mathcal{A}$ with $\Theta(M)=m$. The case of $m=0$ is trivial.  If $m=1$, then $M\in\Theta_{1}$ and thus $l_{\Theta_{1}}(M)=1$. Now consider the general case. We note that $\Theta_{1}^{\perp}\bigcap{^{\perp}}\Theta_{1}=\emptyset$. Without loss of generality, we may assume that there exists a non-zero morphism $f:X\rightarrow M$ such that $\Theta(X)=1$. By Lemma \ref{L-3-77}, $f$ is a $\Theta$-inflation and thus $\Theta(\cone(f))=m-1$. On the other hand, by Proposition \ref{p-1-2}, we have $l_{\Theta_{1}}(M)=1+l_{\Theta_{1}}(\cone(f))$. By induction hypothesis, we have $\Theta(\cone(f))=l_{\Theta_{1}}(\cone(f))$ and conclude that $\Theta(M)=l_{\Theta_{1}}(M)$. Conversely, if $\Theta=l_{\Theta_{1}}$, then $\Theta_{1}^{\perp}\bigcap{^{\perp}}\Theta_{1}=\emptyset$ by Remark \ref{r-1-3}. This implies that $\Theta_{1}=\Theta_{\infty}$.

$(2)$ Obviously, $\Theta_{1}\subseteq\cim(\mathcal{C})$. Take $L\in\cim(\mathcal{C})$. By Remark  \ref{r-1-3}, there exists an  $\mathbb{E}$-triangle $\eta:X\stackrel{}{\longrightarrow}L\stackrel{}{\longrightarrow}M\stackrel{}\dashrightarrow$ with $l_{\Theta_{1}}(X)=1$ and $l_{\Theta_{1}}(\cone(f))=l_{\Theta_{1}}(M)-1$. By (1), we have $\Theta(X)+\Theta(L)=\Theta(M)$ and thus $\eta$ is $\Theta$-stable. Since $L\in\cim(\mathcal{C})$, we get $X\cong L$ by Lemma \ref{L2-4}. This implies that $\cim(\mathcal{C})\subseteq\Theta_{1}$.

$(3)$ If $(\mathcal{C},\Theta)$ is length wide, then $\Theta_{1}=\Theta_{\infty}$ by definition. By (1) and (2), we have $\Theta=l_{\Theta_{1}}=l_{{\rm sim(\mathcal{C})}}$. Conversely, if  $\Theta=l_{{\rm sim(\mathcal{C})}}$, then  $\cim(\mathcal{C})$ is a simple-minded system in $\mathcal{C}$ by Proposition \ref{main9}. Then $(\mathcal{C},l_{{\rm sim(\mathcal{C})}})$ is length wide by Remark \ref{R-1}.
\end{proof}

We are now ready to present the main result of this subsection, which establishes a bijection between semibricks and length wide subcategories in extriangulated categories, revealing their intrinsic connection.

\begin{theorem}\label{main-s} Let $\mathcal{A}$ be an extriangulated category.
Then there exists a bijection between the following two sets.

$(1)$ The set of  semibricks $\mathcal{X}$ in $\mathcal{A}$.

$(2)$ The set of  length wide subcategories $(\mathcal{C},\Theta)$ in $\mathcal{A}$.

The mutually inverse maps are given by $\mathcal{X}\mapsto (\Filt_{\mathcal{A}}(\mathcal{X}),l_{\mathcal{X}})$ and $(\mathcal{C},\Theta)\mapsto \cim(\mathcal{C})$.
\end{theorem}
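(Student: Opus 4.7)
The plan is to verify that both assignments are well-defined and then that they are mutually inverse, using essentially the preparatory material already established.

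For well-definedness, the forward assignment $\mathcal{X} \mapsto (\Filt_{\mathcal{A}}(\mathcal{X}), l_{\mathcal{X}})$ lands in length wide subcategories directly by Remark \ref{R-1}(1). For the reverse assignment, given a length wide subcategory $(\mathcal{C}, \Theta)$, Lemma \ref{L-3-17}(2) identifies $\cim(\mathcal{C})$ with $\Theta_{1}$, which is a semibrick by Lemma \ref{L-3-77}(1); since $\mathcal{C}$ is a full subcategory of $\mathcal{A}$, this is also a semibrick in $\mathcal{A}$.

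To check $\cim(\Filt_{\mathcal{A}}(\mathcal{X})) = \mathcal{X}$ for a semibrick $\mathcal{X}$, I set $\mathcal{C} := \Filt_{\mathcal{A}}(\mathcal{X})$ and $\Theta := l_{\mathcal{X}}$. Applying Lemma \ref{L-3-17}(2) to the length wide pair $(\mathcal{C}, \Theta)$ yields $\cim(\mathcal{C}) = \Theta_{1}$, so it suffices to identify the objects of minimal $l_{\mathcal{X}}$-length with $\mathcal{X}$. On one hand, Proposition \ref{p-1-2}(2) gives $l_{\mathcal{X}}(X) = 1$ for every $X \in \mathcal{X}$. On the other hand, any $M$ with $l_{\mathcal{X}}(M) = 1$ admits a one-step filtration $0 = M_{0} \to M_{1} = M$ whose cone lies in $\mathcal{X}$, forcing $M \in \mathcal{X}$ up to isomorphism.

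Conversely, for a length wide subcategory $(\mathcal{C}, \Theta)$, put $\mathcal{X} := \cim(\mathcal{C})$ and verify $(\Filt_{\mathcal{A}}(\mathcal{X}), l_{\mathcal{X}}) = (\mathcal{C}, \Theta)$ in two steps. The equality of length functions $\Theta = l_{\mathcal{X}}$ is exactly the content of Lemma \ref{L-3-17}(3). For the equality of underlying subcategories, the inclusion $\Filt_{\mathcal{A}}(\mathcal{X}) \subseteq \mathcal{C}$ is immediate since $\mathcal{C}$ contains $\mathcal{X}$ and is extension-closed. For the reverse inclusion, length wideness together with Lemma \ref{L-3-17}(2) yields $\Theta_{\infty} = \Theta_{1} = \mathcal{X}$, so Proposition \ref{main9} applied to the length category $(\mathcal{C}, \Theta)$ shows $\mathcal{X}$ is a simple-minded system in $\mathcal{C}$, whence $\mathcal{C} = \Filt_{\mathcal{C}}(\mathcal{X}) \subseteq \Filt_{\mathcal{A}}(\mathcal{X})$. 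The proof is essentially an assembly of Lemmas \ref{L-3-77} and \ref{L-3-17}, Propositions \ref{p-1-2} and \ref{main9}, and Remark \ref{R-1}, so I do not anticipate significant obstacles; the only point requiring real care is the identification of simple objects of $\Filt_{\mathcal{A}}(\mathcal{X})$ with elements of $\mathcal{X}$, which is exactly where the semibrick hypothesis (rather than an arbitrary collection of objects) enters essentially through Proposition \ref{p-1-2}.
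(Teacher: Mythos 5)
Your proof is correct and follows essentially the same route as the paper's: both hinge on Remark \ref{R-1} for the forward map, Lemma \ref{L-3-17} for the identification $\cim(\mathcal{C})=\Theta_1$ and $\Theta=l_{\cim(\mathcal{C})}$, and Proposition \ref{main9} to recover $\mathcal{C}=\Filt_{\mathcal{A}}(\cim(\mathcal{C}))$; you simply unpack the two round-trip identities a bit more explicitly than the paper does. One small slip: the fact that $l_{\mathcal{X}}(X)=1$ for $X\in\mathcal{X}$ does not follow from Proposition \ref{p-1-2}(2) (which is about direct sums) but rather directly from the definition of $l_{\mathcal{X}}$ together with $X\not\cong 0$ (bricks have division-ring endomorphism rings, hence are nonzero).
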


\begin{proof} For a semibrick $\mathcal{X}$ in $\mathcal{A}$, $(\Filt_{\mathcal{A}}(\mathcal{X}),l_{\mathcal{X}})$ is a length wide subcategory by Remark \ref{R-1}. In this case, we have $\mathcal{X}=\cim(\Filt_{\mathcal{A}}(\mathcal{X}))$ by Lemma \ref{L-3-17}. Conversely, for a length wide subcategory  $(\mathcal{C},\Theta)$, we have $\Theta_{\infty}=\Theta_{1}=\cim(\mathcal{C})$ and $\Theta=l_{{\rm sim(\mathcal{C})}}$. Then  $\cim(\mathcal{C})$ is a simple-minded system in $\mathcal{C}$  by Proposition \ref{main9}. This implies that $(\mathcal{C},\Theta)=(\Filt_{\mathcal{A}}(\cim(\mathcal{C})),l_{{\rm sim(\mathcal{C})}})$.
\end{proof}

\begin{remark} For a finite dimensional algebra $\Lambda$ over a field,
the classic result by Ringel \cite{Ri} tells us that there exists
a bijection between semibricks and wide subcategories in  $\mod \Lambda$.
This bijection was generalized to exact categories in \cite[Theorem 2.5]{En},
 which explicitly establishes a bijection between semibricks and length wide subcategories
 in an exact category. By using Remark \ref{R-1}, we can recover \cite[Theorem 2.5]{En}.
\end{remark}

\subsection{Stable extriangulated length categories} As before, $(\mathcal{A}, \mathbb{E},\mathfrak{s})$ is an extriangulated category.

Let $(\mathcal{A},\Theta)$ be a length category. The length function $\Theta$ is not stable in general. To solve this, we consider the induced extriangulated structures by using length functions. Firstly, we recall from \cite{HLN} that a functor $\mathbb{F}$
is called a {\em subfunctor} of $\mathbb{E}$  if it satisfies the following conditions:

$\bullet$ For any $M,N\in \mathcal{A}$, $\mathbb{F}(M,N)$ is a subset of  $\mathbb{E}(M,N)$.

$\bullet$ For any $f\in\Hom_{\mathcal{A}}(M',M)$, $g\in\Hom_{\mathcal{A}}(N,N')$, we have $\mathbb{F}(f,g)=\mathbb{E}(f,g)|_{\mathbb{F}(M,N)}$.

A subfunctor is said to be an {\em additive subfunctor} if $\mathbb{F}(M,N)\subseteq\mathbb{E}(M,N)$ is an abelian subgroup for any $M,N\in \mathcal{A}$. For any $A,B\in \mathcal{A}$, we define $\mathbb{E}_{\Theta}(A,B)$ to be the subset of $\mathbb{E}(A,B)$  consisting of all $\Theta$-extensions. In particular, if $\Theta$ is stable, then $\mathbb{E}_{\Theta}=\mathbb{E}$. We define $\mathfrak{s}_{\Theta}$ as the restriction of $\mathfrak{s}$ on $\mathbb{E}_{\Theta}$.
Let us begin with the following observation.

\begin{lemma}\label{4-77} Let $f:X\rightarrow Y$ and  $g:Y\rightarrow Z$ be any composable pair of morphisms in $\mathcal{A}$. If $gf$ is a $\Theta$-inflation, then so is $f$. Dually, if $gf$ is a $\Theta$-deflation, then so is $g$.
\end{lemma}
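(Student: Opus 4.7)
The plan is to prove the first implication; the second follows by the symmetric argument using (ET4)$^{\rm op}$ and Lemma \ref{L-1}(2).

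Suppose $gf$ is a $\Theta$-inflation. Since $(\mathcal{A},\Theta)$ is a length category, $f$ admits a $\Theta$-decomposition $f = j_f \circ i_f$, where $i_f\colon X \twoheadrightarrow X_f$ is a $\Theta$-deflation and $j_f\colon X_f \rightarrowtail Y$ is a $\Theta$-inflation. Let $K = \cocone(i_f)$, giving the $\Theta$-stable $\mathbb{E}$-triangle $K \xrightarrow{k} X \xrightarrow{i_f} X_f \dashrightarrow$. The plan reduces to showing $K \cong 0$: then Lemma \ref{L2-4}(3$'$) forces $i_f$ to be an isomorphism, whence $f \cong j_f$ is a $\Theta$-inflation.

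The key observation is that $(gf)\circ k = g\,j_f\,(i_f\,k) = 0$, using $i_f\,k = 0$ from the defining $\mathbb{E}$-triangle of $k$. Since both $k$ and $gf$ are $\Theta$-inflations, I apply (ET4) to the pair $(k, gf)$ to produce an octahedron whose top row is $K \rightarrowtail X \twoheadrightarrow X_f \dashrightarrow$, whose middle column is $X \rightarrowtail Z \twoheadrightarrow \cone(gf) \dashrightarrow$, and whose middle row is an $\mathbb{E}$-triangle $K \xrightarrow{(gf)k} Z \to Q \dashrightarrow$ for some object $Q$. By Lemma \ref{L-1}(1), the two generating $\Theta$-inflations force every $\mathbb{E}$-triangle in the octahedron to be $\Theta$-stable; in particular the middle row is a $\Theta$-stable triangle whose first morphism vanishes. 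Lemma \ref{L2-4}(2) then yields $K \cong 0$.

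The main obstacle is identifying the right octahedron: pairing the inflation $k$ (obtained from the $\Theta$-decomposition of $f$) with the given inflation $gf$ and then invoking Lemma \ref{L-1}(1) is what converts the formal identity $(gf)k=0$ into a $\Theta$-stable triangle on which Lemma \ref{L2-4}(2) can act. For the dual statement, I would $\Theta$-decompose $g = j_g\circ i_g$, let $q$ be the deflation in the $\Theta$-stable triangle $X_g \xrightarrow{j_g} Z \xrightarrow{q} C \dashrightarrow$, observe that $q\circ(gf) = 0$, apply (ET4)$^{\rm op}$ to the pair of $\Theta$-deflations $(gf, q)$, invoke Lemma \ref{L-1}(2) so that every $\mathbb{E}$-triangle in the resulting octahedron is $\Theta$-stable, and conclude via Lemma \ref{L2-4}(2$'$) that $C \cong 0$, so $j_g$ is an isomorphism and $g$ is a $\Theta$-deflation.
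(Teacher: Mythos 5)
Your proof is correct and takes essentially the paper's route: $\Theta$-decompose $f$, form the (ET4) octahedron from the inflation $\cocone(i_f)\rightarrowtail X$ and the given inflation $gf$, and apply Lemma~\ref{L-1}(1) to make every $\mathbb{E}$-triangle in it $\Theta$-stable. The only deviation is the final step: you apply Lemma~\ref{L2-4}(2) directly to the stable middle row, whose inflation $(gf)\circ(\cocone(i_f)\to X)$ vanishes, to conclude $\cocone(i_f)\cong 0$; the paper instead uses that same vanishing to lift $1_Z$ along the deflation of the middle row (so it is a section), deduces the target of that deflation has the same $\Theta$-value as $Z$, and then stability of the middle row forces $\Theta(\cocone(i_f))=0$. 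Your ending is slightly more economical since it reuses the already-prepared Lemma~\ref{L2-4}(2) rather than re-deriving its content; your sketch of the dual argument via (ET4)$^{\rm op}$, Lemma~\ref{L-1}(2) and Lemma~\ref{L2-4}(2$'$) is also sound.
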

\begin{proof} We  only prove the first statement and the second statement can be proved dually. Firstly, we take a $\Theta$-decomposition $(i_{f},X_{f},j_{f})$ of $f$. Applying Lemma \ref{L-1}, we have the following commutative diagram:
\begin{equation*}
\xymatrix{
\cocone(i_{f})~\ar@{=}[d] \ar@{>->}[r] & X\vphantom{\big|}  \ar@{>->}[d]^{gf} \ar@{->>}[r]^{i_{f}} & X_{f} \vphantom{\bigg|}\ar@{>->}[d]^{} \ar@{-->}[r]^-{}&\\
\cocone(i_{f})~\ar@{>->}[r] & Z\ar@{->>}[d]^{} \ar@{->>}[r]^{s} & K \ar@{->>}[d]\ar@{-->}[r]^-{}& \\
  &   \cone(gf)  \ar@{-->}[d]^-{}\ar@{=}[r] & \cone(gf). \ar@{-->}[d]^-{}\\
   &  &  }
\end{equation*}
Since $gj_{f}i_{f}=gf$, there exists a morphism $l:K\rightarrow Z$ such that $ls=1_{Z}$. Then $s$ is a section and then $\Theta(Z)=\Theta(K)$. This implies that $i_{f}$ is an isomorphism and thus $f\cong j_{f}$ is a $\Theta$-inflation.
\end{proof}

\begin{lemma}\label{L-5-1} Let $\delta\in\mathbb{E}_{\Theta}(A,B)$. For any $a:A'\rightarrow A$  and $b:B\rightarrow B'$, we have $a^{\ast}\delta\in\mathbb{E}_{\Theta}(A',B)$ and $b_{\ast}\delta\in\mathbb{E}_{\Theta}(A,B')$. In particular, $\mathbb{E}_{\Theta}$ is an additive subfunctor of $\mathbb{E}$.
\end{lemma}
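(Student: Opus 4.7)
The plan is to prove that $\mathbb{E}_{\Theta}$ is closed under pullback along $a$ and pushforward along $b$, from which the additive-subfunctor claim follows formally via the Baer-sum construction. The key technical point is that Lemma \ref{L-1} handles only 3$\times$3 diagrams of the (ET4)/(ET4)$^{\rm op}$ type, where the ``identity'' row and column occupy two adjacent corners; the general pullback along an arbitrary morphism therefore has to be reduced to the special case of pullback along a $\Theta$-inflation, for which the natural (ET4)$^{\rm op}$ diagram does have the correct shape.

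For a morphism $a\colon A'\to A$ I would exploit the canonical graph factorization $a=\pi_A\circ\binom{1_{A'}}{a}$, where $\binom{1_{A'}}{a}\colon A'\to A'\oplus A$ is the graph morphism and $\pi_A\colon A'\oplus A\to A$ is the projection. The split triangle $A'\rightarrowtail A'\oplus A\twoheadrightarrow A$ is $\Theta$-stable by the split-case clause of Definition \ref{length}(2), so $\binom{1_{A'}}{a}$ is a $\Theta$-inflation. Moreover, $\pi_A^{*}\delta$ is realized by $B\to L\oplus A'\to A'\oplus A$, which is $\Theta$-stable because $\Theta(L\oplus A')=\Theta(L)+\Theta(A')=\Theta(B)+\Theta(A)+\Theta(A')=\Theta(B)+\Theta(A'\oplus A)$. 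It therefore remains to show that for a $\Theta$-inflation $j\colon X\rightarrowtail A$ and a $\Theta$-stable triangle $\eta\colon B\to L\to A\dashrightarrow$, the pullback $j^{*}\eta$ is $\Theta$-stable. Applying (ET4)$^{\rm op}$ to the composable $\Theta$-deflations $L\twoheadrightarrow A$ (from $\eta$) and $A\twoheadrightarrow\cone(j)=:C$ (from the $\Theta$-stable cofiber triangle of $j$) yields a 3$\times$3 diagram whose top row realizes $j^{*}\eta$. This diagram is precisely of the form (\ref{COM}) in Lemma \ref{L-1}; under the matching $(A,B,C,E,D,F)\leftrightarrow(B,L',X,L,A,C)$, the deflations $i=(L\to A)$ and $j=(A\to C)$ of Lemma \ref{L-1}(2) are both $\Theta$-deflations, so every $\mathbb{E}$-triangle in the diagram---in particular its top row---is $\Theta$-stable. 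Combined with the graph factorization, $a^{*}\delta\in\mathbb{E}_{\Theta}(A',B)$.

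The pushforward statement is proved by the symmetric argument: factor $b=(b,1_{B'})\circ\iota_B$ through $B\oplus B'$, where $\iota_B$ is the split $\Theta$-inflation (so that $(\iota_B)_{*}\delta$ is realized by $B\oplus B'\to L\oplus B'\to A$ and is $\Theta$-stable by additivity on direct sums), and $(b,1_{B'})$ is a $\Theta$-deflation. To handle pushforward along a $\Theta$-deflation $p\colon B\twoheadrightarrow B''$ with kernel triangle $N\rightarrowtail B\twoheadrightarrow B''$, I would apply (ET4) to the composable $\Theta$-inflations $N\rightarrowtail B$ and $B\rightarrowtail L$; the resulting 3$\times$3 diagram again fits (\ref{COM}) in Lemma \ref{L-1}, and Lemma \ref{L-1}(1) applies to give the stability of the right column, which realizes the pushforward. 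Thus $b_{*}\delta\in\mathbb{E}_{\Theta}(A,B')$.

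The additive-subfunctor assertion is then formal. The split triangle witnesses $0\in\mathbb{E}_{\Theta}(A,B)$; the direct sum of two $\Theta$-stable triangles is $\Theta$-stable by additivity of $\Theta$ on direct sums; and the Baer sum $\delta_1+\delta_2=(\nabla_B)_{*}\Delta_A^{*}(\delta_1\oplus\delta_2)$, being a pullback along the diagonal $\Delta_A\colon A\to A\oplus A$ followed by a pushforward along the codiagonal $\nabla_B\colon B\oplus B\to B$, is preserved in view of the previous two paragraphs. I expect the main obstacle to be the shape-matching step: the naive pullback 3$\times$3 along an arbitrary morphism (in particular along a $\Theta$-deflation) has its trivial row and column at opposite corners rather than adjacent, so Lemma \ref{L-1} cannot be invoked directly, and the available length inequalities give only the upper bound $\Theta(L')\leq\Theta(B)+\Theta(A')$. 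The graph-factorization trick is precisely what circumvents this mismatch, by routing the whole pullback through a $\Theta$-inflation.
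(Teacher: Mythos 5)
Your proof is correct, but it follows a markedly longer route than the paper's, and the ``obstacle'' you flag at the end never actually arises. The paper does not apply Lemma~\ref{L-1} to any $3\times 3$ diagram here: it invokes Lemma~\ref{4-77} (the cancellation lemma for $\Theta$-inflations and $\Theta$-deflations) directly on the ordinary $2\times 3$ realization diagram for $b_{*}\delta$,
$$\xymatrix{
 B\ar[d]^{b} \ar@{>->}[r]^-{} & K\ar[d]_-{} \ar@{->>}[r]^-{f} & A \ar@{=}[d]_{} \ar@{-->}[r]^-{\delta}  &  \\
 B'\ar[r]^-{} & K' \ar[r]^-{g} & A \ar@{-->}[r]^-{b_{\ast}\delta}&.  }
$$
From the right-hand square, $f=g\circ(K\to K')$; since $f$ is a $\Theta$-deflation (because $\delta$ is $\Theta$-stable), Lemma~\ref{4-77} forces $g$ to be a $\Theta$-deflation, and Lemma~\ref{L2-4} then gives $b_{*}\delta\in\mathbb{E}_{\Theta}(A,B')$. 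The dual cancellation applied to the left square of the pullback diagram handles $a^{*}\delta$. So the ``shape mismatch'' in the naive pullback square is simply irrelevant: no $3\times 3$ diagram is needed, only the fact that in the pullback or pushout square the two middle morphisms compose to a morphism already known to be a $\Theta$-inflation or $\Theta$-deflation.

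Your detour is nevertheless sound: the graph factorization $a=\pi_A\circ\binom{1_{A'}}{a}$ through a split $\Theta$-inflation; the direct length check on the middle term $L\oplus A'$ showing $\pi_A^{*}\delta=\delta\oplus 0$ is $\Theta$-stable; the reduction of pullback along a $\Theta$-inflation to an $(\mathrm{ET4})^{\mathrm{op}}$ diagram that fits (\ref{COM}) so Lemma~\ref{L-1}(2) applies; the dual pushforward argument through $B\oplus B'$ together with $(\mathrm{ET4})$ and Lemma~\ref{L-1}(1); and the Baer-sum reduction for the subgroup claim, which the paper leaves implicit, all check out. What your route buys is a more explicit display of how split triangles and additivity of $\Theta$ enter; what it costs is roughly three times the length and two extra $(\mathrm{ET4})$-type invocations. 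Lemma~\ref{4-77} is stated immediately before the present lemma evidently for this exact purpose, and recognizing it as the relevant tool would have collapsed the whole argument to a few lines.
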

\begin{proof} Consider the following commutative diagram
\begin{equation*}
\xymatrix{
 B\ar[d]^{b} \ar@{>->}[r]^-{} & K\ar[d]_-{} \ar@{->>}[r]^-{f} & A \ar@{=}[d]_{} \ar@{-->}[r]^-{\delta}  &  \\
 B'\ar[r]^-{} & K' \ar[r]^-{g} & A \ar@{-->}[r]^-{b_{\ast}\delta}&.  }
\end{equation*}
Since $f$ is a $\Theta$-deflation, so is $g$ by Lemma \ref{4-77}. By using Lemma \ref{L2-4}, we have $b_{\ast}\delta\in\mathbb{E}_{\Theta}(A,B')$. Similarly, we deduce that $a^{\ast}\delta\in\mathbb{E}_{\Theta}(A',B)$.
\end{proof}

 We define $\mathbb{E}_{\mathcal{C}}$ as the restriction of $\mathbb{E}$ onto $\mathcal{C}^{\rm op}\times \mathcal{C}$ and define $\mathfrak{s}_{\mathcal{C}}$ by restricting  $\mathfrak{s}$. Our aim in this subsection is to prove the following result, which will be used in Section 4.

\begin{proposition}\label{11-11} Let $((\mathcal{A}, \mathbb{E},\mathfrak{s}),\Theta)$ be
a length category. Then  $((\mathcal{A}, \mathbb{E}_{\Theta},\mathfrak{s}_{\Theta}),\Theta)$ is a stable length category.
\end{proposition}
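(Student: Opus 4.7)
The plan is to realize $\mathbb{E}_\Theta$ as a closed additive subfunctor of $\mathbb{E}$ in the sense of Herschend--Liu--Nakaoka, so that the restricted structure $(\mathcal{A},\mathbb{E}_\Theta,\mathfrak{s}_\Theta)$ is automatically extriangulated, and then to verify that $\Theta$ remains a length function on it which is now stable, with every morphism still $\Theta$-admissible.

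The first step is to record that $\mathbb{E}_\Theta$ is an additive subfunctor of $\mathbb{E}$, which is precisely Lemma \ref{L-5-1}. The nontrivial input is the closedness condition: the class of $\Theta$-inflations (equivalently, $\Theta$-deflations) must be closed under composition. To verify this, I would take composable $\Theta$-inflations $f\colon X\to Y$ and $g\colon Y\to Z$, complete them to $\mathbb{E}$-triangles, and apply (ET4) to produce the standard $3\times 3$ diagram in which $gf$ appears as the middle row inflation whose cone $E$ fits into an $\mathbb{E}$-triangle $\cone(f)\to E\to \cone(g)\dashrightarrow$. Lemma \ref{L-1}(1) then applies directly: since $f$ and $g$ are both $\Theta$-inflations, every $\mathbb{E}$-triangle in the diagram is $\Theta$-stable, and in particular the middle row is $\Theta$-stable, so $gf$ is a $\Theta$-inflation. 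The dual statement for $\Theta$-deflations follows symmetrically from Lemma \ref{L-1}(2).

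Once closedness is established, the Herschend--Liu--Nakaoka machinery yields that $(\mathcal{A},\mathbb{E}_\Theta,\mathfrak{s}_\Theta)$ is an extriangulated category. The remaining checks are light. Every $\mathbb{E}_\Theta$-triangle is, by construction, a $\Theta$-stable $\mathbb{E}$-triangle, so the axioms of a length function from Definition \ref{length} transfer verbatim to the new structure and are now satisfied with equality throughout, which gives stability. For $\Theta$-admissibility, any morphism $f\colon M\to N$ admits a $\Theta$-decomposition $(i_f,X_f,j_f)$ in the original category by Definition \ref{Def}; since $i_f$ is a $\Theta$-deflation and $j_f$ is a $\Theta$-inflation, both realized $\mathbb{E}$-triangles are $\Theta$-stable and hence belong to $\mathfrak{s}_\Theta$, so the same decomposition witnesses $\Theta$-admissibility in the new structure.

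The main obstacle is the closedness of $\mathbb{E}_\Theta$ under composition of inflations and deflations, and Lemma \ref{L-1} is tailored precisely for this task. After that, everything reduces to bookkeeping, since $\mathbb{E}_\Theta$ was defined exactly so that stability holds tautologically and $\Theta$-decompositions are automatically preserved.
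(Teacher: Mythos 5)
Your proof is correct and follows the same route as the paper: both establish that $\mathbb{E}_\Theta$ is an additive subfunctor via Lemma \ref{L-5-1}, invoke \cite[Proposition 3.14]{HLN} with Lemma \ref{L-1} supplying the closedness of $\Theta$-inflations under composition, and then observe that the resulting structure is tautologically stable and still admissible. You simply make explicit the verification that the paper compresses into ``Using [HLN, Proposition 3.14] together with Lemma \ref{L-1}'' and ``we can easily check.''
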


\begin{proof} First of all, $\mathbb{E}_{\Theta}$ is an additive subfunctor of $\mathbb{E}$ by Lemma \ref{L-5-1}. Using  \cite[Proposition 3.14]{HLN} together with Lemma \ref{L-1}, we obtain that the triple $(\mathcal{A}, \mathbb{E}_{\Theta},\mathfrak{s}_{\Theta})$ is an extriangulated category. Since $((\mathcal{A}, \mathbb{E},\mathfrak{s}),\Theta)$ is a length category, we can easily check that $((\mathcal{A}, \mathbb{E}_{\Theta},\mathfrak{s}_{\Theta}),\Theta)$  is a stable length category.
\end{proof}

\subsection{Examples of extriangulated length categories}
In this subsection, we present examples of length categories that involve several topics discussed in the preceding subsections.

\begin{example}Let $\Lambda$ be a  finite dimensional algebra.  As we discussed in Example \ref{E-3-10}, $\mod \Lambda$ is a stable length category. The length subcategories in $\mod \Lambda$  are just length wide subcategories by Remark \ref{R-1}. In particular, these subcategories are stable.
\end{example}

\begin{example}
Let $\mathcal{T}$ be a Hom-finite Krull-Schmidt triangulated category with shift functor $[1]$.  We say that a semibrick $\mathcal{S}$ in $\mathcal{T}$ is a {\em simple-minded collection} if $\mathcal{S}$ is a finite set, $\mathcal{S}$ generates $\mathcal{T}$ as triangulated category and $\Hom(S,S[n])=0$ for $n<0$. An {\em algebraic t-structure} on $\mathcal{T}$ is a $t$-structure $(\mathcal{T}^{\leq0},\mathcal{T}^{\geq0})$  such that the {\em heart} $\mathcal{H}:=\mathcal{T}^{\leq0}\cap\mathcal{T}^{\geq0}$ is an abelian length category with finitely many isomorphism classes of simple objects.  There is a bijection between isomorphism classes of simple-minded collections and algebraic $t$-structures. More precisely, for a simple-minded collection $\mathcal{S}$ in $\mathcal{T}$, the corresponding algebraic $t$-structure is $(\Filt_{\mathcal{T}}(S)[\geq0],\Filt_{\mathcal{T}}(S)[\leq0])$, where
$$\Filt_{\mathcal{T}}(\mathcal{S})[\geq0]:=\Filt_{\mathcal{T}}(\bigcup_{S\in\mathcal{S},i\geq0} S[i])~\text{and}~\Filt_{\mathcal{T}}(\mathcal{S})[\leq 0]:=\Filt_{\mathcal{T}}(\bigcup_{S\in\mathcal{S},i\leq0} S[i]).$$
Thus  $\Filt_{\mathcal{T}}(\mathcal{S}):=\Filt_{\mathcal{T}}(\mathcal{S})[\geq0]\cap \Filt_{\mathcal{T}}(\mathcal{S})[\leq0]$ is a stable length subcategory in  $\mathcal{T}$. As a consequence, a simple-minded collection is  proper.
\end{example}

\begin{example}\label{Derived}
Let $\Lambda$ be a finite dimensional algebra of finite global dimension. Recall that the {\em trivial extension} of $\Lambda$ is a Frobenius algebra $\mathrm{T}(\Lambda)=\Lambda\oplus D\Lambda$, where $D$ is the standard linear duality.  It is well-know that  there is an equivalence of triangulated categories $D^{b}(\Lambda)\cong \underline{\mathbf{gr}}$-$\mathrm{T}(\Lambda)$, where $\underline{\mathbf{gr}}$-$\mathrm{T}(\Lambda)$ is the stable category of $\mathbb{Z}$-graded $\mathrm{T}(\Lambda)$-modules. Let $\{S_{i}~|~1\leq i\leq n\}$ be the set of isomorphism classes of simple $\mathrm{T}(\Lambda)$-modules. Then the set $\mathcal{S}=\{S_{ij}~|~1\leq i\leq n,j\in \mathbb{Z}\}$ is a  simple-minded system in  $\underline{\mathbf{gr}}$-$\mathrm{T}(\Lambda)$, where $S_{ij}$ denotes the simple module $S_{i}$ concentrated in degree 0 with grading shift $j$. By \cite[Example 2]{Du}, the $\mathrm{T}(\Lambda)$-module $S_{ij}$ corresponds to $v^{j}(S_{i}[j])$ in $D^{b}(\Lambda)$, where $v$ denotes the Serre functor of $D^{b}(\Lambda)$. Then Theorem \ref{main1} implies that $(D^{b}(\Lambda),l_{\mathcal{S}})$ is a length category. Now we explain the notions by a concrete example.

Let $\Lambda$ be the path algebra of the quiver $1\longrightarrow2\longrightarrow3$. The Auslander-Reiten quiver $\Gamma$ of the bounded derived category $D^{b}(\Lambda)$ is as follows:
\begin{equation*}
\xymatrix@!=0.5pc{
   && S_3[-1]\ar[dr]  && S_2[-1]\ar[dr] && S_1[-1]\ar[dr] && P_1\ar[dr] && \\
   &&\cdots\cdots\quad& P_2[-1]\ar[dr]\ar[ur] && I_2[-1]\ar[ur]\ar[dr] && P_2\ar[ur]\ar[dr] && I_2\ar[dr] & \cdots\cdots\\
   &&&& P_1[-1]\ar[ur] && S_3\ar[ur] && S_2\ar[ur] && S_1}
\end{equation*}
We identify $S_{ij}\in\underline{\mathbf{gr}}$-$\mathrm{T}(\Lambda)$ with $v^{j}(S_{i}[j])$ in $D^{b}(\Lambda)$. We calculate:
$$
\begin{tabular}{|p{0.5cm}|p{1.2cm}|p{2,0cm}|c|p{2.0cm}|}
\hline
 & $j=4k$&    $j=4k+1$    &     $j=4k+2$ & $j=4k+3$\\
\hline
$S_{3j}$& $S_{3}[6k]$    &   $P_{1}[6k+1]$   &   $S_{1}[6k+2]$    &     $S_{2}[6k+4]$      \\
\hline
$S_{2j}$&     $S_{2}[6k]$           &   $S_{3}[6k+2]$    &    $P_{1}[6k+3]$     &     $S_{1}[6k+4]$          \\
\hline
$S_{1j}$&    $S_{1}[6k]$             &    $S_{2}[6k+2]$  &    $S_{3}[6k+4]$      &          $P_{1}[6k+5]$             \\
\hline
\end{tabular}
$$

Therefore, the set $\mathcal{S}=\{S_{ij}~|~1\leq i\leq n,j\in \mathbb{Z}\}$ is consisting of the isomorphism classes of objects in the bottom row of $\Gamma$. This implies that $(D^{b}(\Lambda),l_{\mathcal{S}})$ is a length category. The length structure  possesses the following properties:

$\bullet$  $l_{\mathcal{S}}$ is not stable.

$\bullet$  If $M\notin \mathcal{S}$, then $l_{\mathcal{S}}(M[i])=2$ for any $i\in\mathbb{Z}$.

$\bullet$ If  $M\in \mathcal{S}$, then $l_{\mathcal{S}}(M[i])=3$ if $i$ is odd; $l_{\mathcal{S}}(M[i])=1$ if $i$ is even.
\end{example}

The following example demonstrates that a derived category can exhibit significantly different length structures.

\begin{example}\label{E-3-26} We keep the notation of Example \ref{Derived}. Let $\mathcal{X}$ be the set consisting of the isomorphism classes of objects in the top row of $\Gamma$, i.e. $$\mathcal{X}=\bigcup_{i=2k,k\in \mathbb{Z}}\{P_{1}[i-1],S_{3}[i],S_{2}[i],S_{1}[i]\}.$$
It is clear that $\mathcal{X}$ is a simple-minded system in $D^{b}(\Lambda)$. Therefore, we have two distinct length structures $(D^{b}(\Lambda),l_{\mathcal{X}})$ and $(D^{b}(\Lambda),l_{\mathcal{S}})$.
\end{example}

We also provide an example of a length category, which is neither exact nor triangulated.

\begin{example}\label{E-2-22} We retain the notation used in Example \ref{Derived} and set $$\mathcal{Y}=\{S_{2}[-1],S_{1}[-1],P_{1}\}.$$ Then  the Auslander-Reiten quiver of $\mathcal{A}:=\Filt_{D^{b}(\Lambda)}(\mathcal{Y})$ is given by
\begin{equation*}
\xymatrix@!=0.5pc{
    S_2[-1]\ar[dr] && S_1[-1]\ar[dr] && P_1  \\
   & I_2[-1]\ar[ur]\ar[dr] && P_2\ar[ur] & \\
     && S_3\ar[ur] &&}
\end{equation*}
Since $P_{2}\rightarrow P_{1}$ is a monomorphism in $\mod(\Lambda)$, we easily see that $(\mathcal{A},l_{\mathcal{Y}})$ is a stable length  subcategory  in $D^{b}(\Lambda)$ which is neither exact nor triangulated.
\end{example}

\section{Torsion classes in extriangulated length categories}
Our aim in this section is to establish  a lattice theoretical framework of torsion classes in length categories, which is  a generalization of some classical results over a finite dimensional algebra (c.f. \cite{De}). In this section, we always assume that  $((\mathcal{A}, \mathbb{E},\mathfrak{s}),\Theta)$ is a  length category.

\subsection{Torsion classes}
Given subcategories $\mathcal{S},\mathcal{T}\subseteq \mathcal{A}$, we define
 $$\Fac_{\Theta}(\mathcal{S})=\{M\in \mathcal{A}~|~\text{there exists a $\Theta$-deflation}~S\twoheadrightarrow M~\text{for some}~S\in \mathcal{S}\},$$
$$\Sub_{\Theta}(\mathcal{S})=\{M\in \mathcal{A}~|~\text{there exists a $\Theta$-inflation}~M\rightarrowtail S~\text{for some}~S\in \mathcal{S}\},$$
 $$\mathcal{S}\ast_{\Theta} \mathcal{T}=\{M\in \mathcal{A}~|~\text{there exists a stable $\mathbb{E}$-triangle}~T\stackrel{}\rightarrowtail M\stackrel{}\twoheadrightarrow F\stackrel{}\dashrightarrow~$$
$$\text{with}~T\in\mathcal{S},F\in\mathcal{T} \}.$$

We introduce the following notions, which play a central role in this section.
\begin{definition}\label{Torsion} A {\em torsion class}  in $(\mathcal{A},\Theta)$ is a subcategory $\mathcal{T}$ of $\mathcal{A}$ such that $\Fac_{\Theta}(\mathcal{T})\subseteq\mathcal{T}$ and $\mathcal{T}\ast_{\Theta}\mathcal{T}\subseteq\mathcal{T}$. The set of torsion classes in $(\mathcal{A},\Theta)$ is denoted by $\tors_{\Theta}(\mathcal{A})$. Dually, a subcategory $\mathcal{F}$ of $\mathcal{A}$ is called a {\em torsion-free class} in $(\mathcal{A},\Theta)$ if $\Sub_{\Theta}(\mathcal{T})\subseteq\mathcal{T}$ and $\mathcal{T}\ast_{\Theta}\mathcal{T}\subseteq\mathcal{T}$. The set of torsion-free classes in $(\mathcal{A},\Theta)$ is denoted by $\torf_{\Theta}(\mathcal{A})$.
\end{definition}

If $(\mathcal{A},\Theta)=(\mod \Lambda,l_{{\rm sim({\rm mod} \Lambda)}})$ for a finite dimensional algebra $\Lambda$, then Definition \ref{Torsion} coincides with the usual one.

\begin{definition} A  {\em torsion pair}  $(\mathcal{T},\mathcal{F})$ in $(\mathcal{A},\Theta)$ is a pair of subcategories $\mathcal{T},\mathcal{F}\in \mathcal{A}$ such that $\Hom_{\mathcal{A}}(\mathcal{T},\mathcal{F})=0$, and for any $M\in\mathcal{A}$, there exists a $\Theta$-stable $\mathbb{E}$-triangle $T\stackrel{}\rightarrowtail M\stackrel{}\twoheadrightarrow F\dashrightarrow$ with $T\in\mathcal{T}$ and $F\in\mathcal{F}$.
\end{definition}

When $\Theta$ is stable, the torsion pairs in $(\mathcal{A},\Theta)$ coincide with the usual torsion pairs.  The following provides a useful characterization of torsion pairs in a length category.

\begin{lemma}\label{L-3-3} Let  $(\mathcal{T},\mathcal{F})$ be a torsion pair in $(\mathcal{A},\Theta)$. The following statements hold:

$(1)$ $\mathcal{T}^{\perp}=\mathcal{F}$ and $\mathcal{T}={^{\perp}}\mathcal{F}$.

$(2)$  $\mathcal{T}$ and $\mathcal{F}$ are extension-closed.

$(3)$ $\mathcal{T}$ is a torsion class in $(\mathcal{A},\Theta)$.

$(3')$ $\mathcal{F}$ is a torsion-free class in $(\mathcal{A},\Theta)$.
\end{lemma}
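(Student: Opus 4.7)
The plan is to establish the four assertions in the order (1), (2), (3), with (3') following by duality. The common toolkit consists of the $\Theta$-stability of the decomposition $\mathbb{E}$-triangle built into the definition of a torsion pair, Lemma \ref{L2-4} which converts vanishing of structure maps in stable triangles into vanishing of the flanking objects, and Lemma \ref{L-1} which propagates $\Theta$-stability across octahedral-style diagrams.

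For (1), the inclusions $\mathcal{T}\subseteq{}^{\perp}\mathcal{F}$ and $\mathcal{F}\subseteq\mathcal{T}^{\perp}$ are immediate from $\Hom_{\mathcal{A}}(\mathcal{T},\mathcal{F})=0$. For the reverse, given $M\in{}^{\perp}\mathcal{F}$ I would invoke the torsion pair to obtain a $\Theta$-stable $\mathbb{E}$-triangle $T\rightarrowtail M\twoheadrightarrow F\dashrightarrow$ with $T\in\mathcal{T}$ and $F\in\mathcal{F}$. The deflation $M\twoheadrightarrow F$ lies in $\Hom_{\mathcal{A}}(M,F)=0$, so Lemma \ref{L2-4}(2') forces $F\cong 0$, and then Lemma \ref{L2-4}(3') identifies $M$ with $T\in\mathcal{T}$. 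The inclusion $\mathcal{T}^{\perp}\subseteq\mathcal{F}$ is dual via Lemma \ref{L2-4}(2) and (3).

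For (2), the characterization $\mathcal{T}={}^{\perp}\mathcal{F}$ from (1) reduces extension-closure of $\mathcal{T}$ to a routine diagram chase: for any $\mathbb{E}$-triangle $A\to B\to C\dashrightarrow$ with $A,C\in\mathcal{T}$ and any $F\in\mathcal{F}$, the standard long exact sequence $\Hom_{\mathcal{A}}(C,F)\to\Hom_{\mathcal{A}}(B,F)\to\Hom_{\mathcal{A}}(A,F)$ has vanishing outer terms, forcing the middle to vanish and hence $B\in{}^{\perp}\mathcal{F}=\mathcal{T}$. Extension-closure of $\mathcal{F}$ is the dual statement.

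For (3), the inclusion $\mathcal{T}\ast_{\Theta}\mathcal{T}\subseteq\mathcal{T}$ follows directly from (2), since every stable $\mathbb{E}$-triangle is in particular an $\mathbb{E}$-triangle. The substantive content is $\Fac_{\Theta}(\mathcal{T})\subseteq\mathcal{T}$, and here lies the main obstacle: deflations in a general extriangulated category are not epimorphisms, so from a $\Theta$-deflation $g:T\twoheadrightarrow M$ with $T\in\mathcal{T}$ and the torsion pair decomposition $T_{0}\rightarrowtail M\twoheadrightarrow F_{0}\dashrightarrow$, the vanishing of the composite $T\to M\twoheadrightarrow F_{0}$ (which holds as it lies in $\Hom_{\mathcal{A}}(\mathcal{T},\mathcal{F})$) does not by itself force $M\twoheadrightarrow F_{0}$ to vanish. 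To circumvent this, I would apply the octahedral-type axiom $(\mathrm{ET4})^{\mathrm{op}}$ to the two consecutive $\Theta$-deflations $T\twoheadrightarrow M$ and $M\twoheadrightarrow F_{0}$, producing a commutative diagram of the shape appearing in Lemma \ref{L-1} which contains an $\mathbb{E}$-triangle of the form $K\rightarrowtail T\twoheadrightarrow F_{0}\dashrightarrow$ whose deflation is precisely the zero composite. Lemma \ref{L-1}(2) then guarantees that every triangle in this diagram is $\Theta$-stable, and Lemma \ref{L2-4}(2') applied to the vanishing deflation $T\twoheadrightarrow F_{0}$ yields $F_{0}\cong 0$, so $M\cong T_{0}\in\mathcal{T}$. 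Statement (3') is the exact dual, using (ET4), Lemma \ref{L-1}(1) and the remaining parts of Lemma \ref{L2-4}.
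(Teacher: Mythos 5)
Your proof is correct and follows essentially the same route as the paper. For (1), you prove $^{\perp}\mathcal{F}\subseteq\mathcal{T}$ via the torsion-pair decomposition and Lemma \ref{L2-4} while the paper proves the symmetric inclusion $\mathcal{T}^{\perp}\subseteq\mathcal{F}$; for (2) both use the long exact sequence from $\Hom_{\mathcal{A}}(-,\mathcal{F})$ together with part (1); and for (3) both observe that the composite $T\twoheadrightarrow M\twoheadrightarrow F_0$ is a zero $\Theta$-deflation and invoke Lemma \ref{L2-4} to kill $F_0$. Your treatment of (3) is actually a bit more explicit than the paper's: where the paper asserts in one line "Since $ba=0$ is a $\Theta$-deflation," you correctly identify that composites of deflations being deflations requires $(\mathrm{ET4})^{\mathrm{op}}$, and that the composite's $\Theta$-stability requires Lemma \ref{L-1}(2), and you also flag the non-epimorphism subtlety that makes this composition step necessary in the extriangulated setting. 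This is a fair expansion of a step the paper compresses.
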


\begin{proof} (1) The inclusion $\mathcal{F}\subseteq\mathcal{T}^{\perp}$ is obvious. For each $M\in\mathcal{T}^{\perp}$, there exists a $\Theta$-stable $\mathbb{E}$-triangle $T\stackrel{f}\rightarrowtail M\stackrel{}\twoheadrightarrow F\dashrightarrow$ with $T\in\mathcal{T}$ and $F\in\mathcal{F}$. Since $f=0$, we obtain $M\cong F\in \mathcal{F}$. This proves $\mathcal{T}^{\perp}=\mathcal{F}$. The proof of $\mathcal{T}={^{\perp}}\mathcal{F}$ is similar.

$(2)$ We take an $\mathbb{E}$-triangle $T_{1}\stackrel{}\longrightarrow M\stackrel{}\longrightarrow T_{2}\dashrightarrow$ with each $T_{i}\in\mathcal{T}$. Applying $\Hom_{\mathcal{A}}(-,\mathcal{F})$ to it, we deduce $\Hom_{\mathcal{A}}(M,\mathcal{F})=0$. By (1), we get $M\in\mathcal{T}$ and thus $\mathcal{T}\ast\mathcal{T}\subseteq\mathcal{T}$. Similarly, the second
statement can be proved.

$(3)$ For any $M\in\Fac_{\Theta}(\mathcal{T})$, there exists an $\mathbb{E}$-triangle $T'\stackrel{}\rightarrowtail T\stackrel{a}\twoheadrightarrow M\dashrightarrow$ with  $T\in\mathcal{T}$. Since  $(\mathcal{T},\mathcal{F})$ is a torsion pair, we have an $\mathbb{E}$-triangle $T''\stackrel{}\rightarrowtail M\stackrel{b}\twoheadrightarrow F\dashrightarrow$ with  $T''\in\mathcal{T}$ and $F\in \mathcal{F}$. Since $ba=0$ is a $\Theta$-deflation, we have $F\cong0$ and thus $M\cong T''\in \mathcal{T}$. Similarly, we can prove $(3)'$.
\end{proof}

\begin{lemma}\label{L-2-1}  $(1)$ If $\mathcal{T}\in\tors_{\Theta}(\mathcal{A})$, then $(\mathcal{T},\mathcal{T}^{\perp})$ is a torsion pair in $(\mathcal{A},\Theta)$.

$(2)$  If $\mathcal{F}\in\torf_{\Theta}(\mathcal{A})$, then $(^{\perp}\mathcal{F},\mathcal{F})$ is a torsion pair in $(\mathcal{A},\Theta)$.

\end{lemma}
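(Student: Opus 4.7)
The plan is to prove statement~(1); statement~(2) then follows by a dual argument exchanging inflations with deflations, $\Fac_{\Theta}$ with $\Sub_{\Theta}$, and left with right orthogonals. Set $\mathcal{F}:=\mathcal{T}^{\perp}$. The orthogonality condition $\Hom_{\mathcal{A}}(\mathcal{T},\mathcal{F})=0$ is immediate from the definition of $\mathcal{T}^{\perp}$, so the real task is to construct, for each $M\in\mathcal{A}$, a $\Theta$-stable $\mathbb{E}$-triangle $T\rightarrowtail M\twoheadrightarrow F\dashrightarrow$ with $T\in\mathcal{T}$ and $F\in\mathcal{F}$.

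I would argue by maximality on the length. Consider the $\Theta$-inflations $t:T\rightarrowtail M$ with $T\in\mathcal{T}$; this collection is non-empty (take $T=0$), and any such $t$ fits in a $\Theta$-stable triangle in which $\Theta(M)=\Theta(T)+\Theta(\cone(t))$, so $\Theta(T)\leq\Theta(M)$. Pick $t_{0}:T_{0}\rightarrowtail M$ maximizing $\Theta(T_{0})$ and fix a resulting $\Theta$-stable $\mathbb{E}$-triangle $T_{0}\rightarrowtail M\twoheadrightarrow F\dashrightarrow$. The content of the argument is then that $F\in\mathcal{F}$.

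To prove this, I would argue by contradiction. Suppose $f:T'\to F$ is non-zero with $T'\in\mathcal{T}$. By $\Theta$-admissibility, take a $\Theta$-decomposition $(i_{f},X_{f},j_{f})$ of $f$; since $f\neq0$ neither factor vanishes, so $X_{f}\neq0$ and $\Theta(X_{f})\geq 1$, while $i_{f}$ witnesses $X_{f}\in\Fac_{\Theta}(\mathcal{T})\subseteq\mathcal{T}$. Invoking $(\mathrm{ET4})^{\mathrm{op}}$, I would pull the triangle $T_{0}\rightarrowtail M\twoheadrightarrow F\dashrightarrow$ back along the $\Theta$-inflation $j_{f}:X_{f}\rightarrowtail F$ to obtain a $3\times3$ commutative diagram whose top row is $T_{0}\rightarrowtail M'\twoheadrightarrow X_{f}\dashrightarrow$ and whose middle column is $M'\rightarrowtail M\twoheadrightarrow\cone(j_{f})\dashrightarrow$. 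By Lemma~\ref{L-1}, every $\mathbb{E}$-triangle in this diagram is $\Theta$-stable. Hence $M'\in\mathcal{T}\ast_{\Theta}\mathcal{T}\subseteq\mathcal{T}$, while $M'\rightarrowtail M$ is a $\Theta$-inflation with $\Theta(M')=\Theta(T_{0})+\Theta(X_{f})>\Theta(T_{0})$, contradicting the maximal choice of $T_{0}$.

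The main obstacle is assembling the pullback diagram correctly inside the extriangulated framework and verifying the $\Theta$-stability of every triangle appearing in it: the construction rests on a careful application of $(\mathrm{ET4})^{\mathrm{op}}$, and the propagation of stability depends on Lemma~\ref{L-1}. Once the $3\times3$ diagram is in place and Lemma~\ref{L-1} is applied, the maximality argument closes things up immediately. A minor subsidiary check is the non-vanishing $X_{f}\neq0$, which is needed so that $\Theta(M')>\Theta(T_{0})$; this follows at once because $X_{f}=0$ would force $f=j_{f}i_{f}=0$.
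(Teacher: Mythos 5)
Your proof is correct, and it takes a genuinely different route from the paper. The paper argues by induction on $\Theta(M)$: it picks any non-zero $f\colon N\to M$ with $N\in\mathcal{T}$, extracts the image $X_f\in\mathcal{T}$ from a $\Theta$-decomposition, observes $\Theta(\cone(j_f))<\Theta(M)$, applies the inductive hypothesis to $\cone(j_f)$ to get a torsion decomposition there, and then glues via $(\mathrm{ET4})^{\mathrm{op}}$ to produce the decomposition of $M$. You instead run a maximality argument: choose a $\Theta$-inflation $T_0\rightarrowtail M$ with $T_0\in\mathcal{T}$ maximizing $\Theta(T_0)$ (the values are bounded by $\Theta(M)$, so the maximum is attained), and show that any non-zero map $T'\to F$ from $\mathcal{T}$ into the cone could be used to enlarge $T_0$, again via $(\mathrm{ET4})^{\mathrm{op}}$ and Lemma~\ref{L-1}. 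Both proofs rest on the same two pillars (the $\Theta$-decomposition of a morphism into/out of $\mathcal{T}$, and the propagation of $\Theta$-stability through a $3\times3$ diagram), but yours is a terminal-state argument while the paper's is a recursive construction. The maximality phrasing is arguably cleaner here because it avoids threading the inductive hypothesis through the diagram; it also makes the finiteness hypothesis (that $\Theta$ is $\mathbb{N}$-valued) do its work in one place, namely the existence of the maximizing $T_0$, rather than in the induction base. Everything you invoke is justified: $X_f\neq 0$ because $f\neq 0$; $X_f\in\Fac_\Theta(\mathcal{T})\subseteq\mathcal{T}$; the top row of the $3\times3$ diagram is $\Theta$-stable by Lemma~\ref{L-1}(2) applied with $i,j$ the two $\Theta$-deflations $M\twoheadrightarrow F$ and $F\twoheadrightarrow\cone(j_f)$, giving $M'\in\mathcal{T}\ast_\Theta\mathcal{T}\subseteq\mathcal{T}$; and the middle column's stability gives the contradicting inequality $\Theta(M')=\Theta(T_0)+\Theta(X_f)>\Theta(T_0)$.
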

\begin{proof} We only prove (1), since (2) can be proved in a similar way.

Take any object $M\in\mathcal{A}$. Without loss of generality, we assume that $M\notin \mathcal{T}^{\perp}$. Then there exists a non-zero morphism $f:N\rightarrow M$ for some $N\in \mathcal{T}$. If $\Theta(M)=1$, by Lemma \ref{L-3-77},  $f$ is a $\Theta$-deflation and thus $M\in\Fac_{\Theta}(\mathcal{T})$. For the general case, we take a $\Theta$-decomposition $(i_{f},X_{f},j_{f})$ for $f$. Note that $X_{f}\in \mathcal{T}$ since $\mathcal{T}$ is a torsion class. If $\Theta(\cone(j_{f}))=\Theta(M)$, then $X_{f}\cong0$ and thus $f=0$. This is a contradiction. Therefore, by induction hypothesis, there exists an $\mathbb{E}$-triangle $T\rightarrowtail \cone(j_{f})\twoheadrightarrow F\stackrel{}\dashrightarrow$ with $T\in \mathcal{T}$ and $F\in \mathcal{T}^{\perp}$. Applying $\rm (ET4)^{op}$ yields the following  commutative diagram
\begin{equation}\label{L-3}
\begin{array}{l}
\xymatrix{
 X_{f}~\ar@{=}[d] \ar@{>->}[r] & K \vphantom{\big|} \ar@{>->}[d]^{} \ar@{->>}[r]^{} & T\vphantom{\big|}  \ar@{>->}[d] \ar@{-->}[r]^-{}&\\
  X_{f}~\ar@{>->}[r] & M\ar@{->>}[d]^{} \ar@{->>}[r]^{} & \cone(j_{f}) \ar@{->>}[d]\ar@{-->}[r]^-{}& \\
  &  F  \ar@{-->}[d]^-{}\ar@{=}[r] & F.\ar@{-->}[d]^-{}\\
   &  &  }
\end{array}
\end{equation}
Since $X_{f},T\in \mathcal{T}$, we have $K\in \mathcal{T}$. Then the second column in (\ref{L-3}) gives the desired $\mathbb{E}$-triangle.
\end{proof}

\begin{remark}\label{R-3-5} Combing Lemma \ref{L-3-3} with Lemma \ref{L-2-1}, we get that the torsion  (resp. torsion-free) classes can also be defined as follows: A  torsion (resp. torsion-free) class  in $(\mathcal{A},\Theta)$ is a subcategory $\mathcal{T}$ such that $\Fac_{\Theta}(\mathcal{T})\subseteq\mathcal{T}$ (resp.  $\Sub_{\Theta}(\mathcal{T})\subseteq\mathcal{T}$) and $\mathcal{T}\ast\mathcal{T}\subseteq\mathcal{T}$.
\end{remark}

There is a natural bijection between torsion classes and torsion-free classes in a length category,
described as follows.
\begin{proposition}\label{L-2-2} There exist mutually inverse bijections
$$
\xymatrix@C=3.5pc{ \tors_{\Theta}(\mathcal{A})\ar@<-1ex>[r]_-{(-)^{\perp}}&
\torf_{\Theta}(\mathcal{A})\ar@<-1ex>[l]_-{{^{\perp}}(-)}.        }
$$
\end{proposition}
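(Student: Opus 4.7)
The plan is to show that the two maps $(-)^{\perp}$ and ${}^{\perp}(-)$ are well-defined on their respective sources and are mutually inverse, leveraging the torsion pair machinery already developed in Lemmas \ref{L-3-3} and \ref{L-2-1}.

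First I would verify that the assignments land in the correct targets. Given $\mathcal{T}\in\tors_{\Theta}(\mathcal{A})$, Lemma \ref{L-2-1}(1) produces a torsion pair $(\mathcal{T},\mathcal{T}^{\perp})$, and then Lemma \ref{L-3-3}(3$'$) immediately yields $\mathcal{T}^{\perp}\in\torf_{\Theta}(\mathcal{A})$, so $(-)^{\perp}$ is a well-defined map $\tors_{\Theta}(\mathcal{A})\to\torf_{\Theta}(\mathcal{A})$. The dual argument using Lemma \ref{L-2-1}(2) and Lemma \ref{L-3-3}(3) shows that ${}^{\perp}(-)$ is a well-defined map $\torf_{\Theta}(\mathcal{A})\to\tors_{\Theta}(\mathcal{A})$.

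Next I would check that the two compositions are identities. For any $\mathcal{T}\in\tors_{\Theta}(\mathcal{A})$, the pair $(\mathcal{T},\mathcal{T}^{\perp})$ is a torsion pair by Lemma \ref{L-2-1}(1), and Lemma \ref{L-3-3}(1) applied to this pair gives ${}^{\perp}(\mathcal{T}^{\perp})=\mathcal{T}$. Symmetrically, for $\mathcal{F}\in\torf_{\Theta}(\mathcal{A})$, the pair $({}^{\perp}\mathcal{F},\mathcal{F})$ is a torsion pair by Lemma \ref{L-2-1}(2), and Lemma \ref{L-3-3}(1) yields $({}^{\perp}\mathcal{F})^{\perp}=\mathcal{F}$. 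This establishes that the two maps are mutually inverse, completing the bijection.

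Since all the substantive work has already been absorbed into the preceding lemmas, no real obstacle remains: the argument is a bookkeeping combination of Lemma \ref{L-2-1} (which manufactures torsion pairs from one-sided data) and Lemma \ref{L-3-3}(1) (which recovers each component of a torsion pair as the orthogonal of the other). If anything deserves a word of care, it is simply the observation that Lemma \ref{L-3-3}(1) requires a genuine torsion pair as input, which is exactly what Lemma \ref{L-2-1} guarantees in both directions; this interplay is what makes the bijection work in the full extriangulated length setting rather than only in abelian or triangulated contexts.
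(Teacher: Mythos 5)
Your proof is correct and takes essentially the same approach as the paper: both arguments hinge on Lemma \ref{L-2-1} to produce torsion pairs from one-sided data and on Lemma \ref{L-3-3} to read off the other component via orthogonals. The only difference is that the paper re-proves closure of $\mathcal{T}^{\perp}$ under extensions and $\Sub_{\Theta}$ by hand before citing the lemmas, whereas you cite Lemma \ref{L-3-3}(3$'$) directly, which is the tidier bookkeeping.
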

\begin{proof}   Let $\mathcal{T}\in\tors(\mathcal{A})$. It is easy to check that $\mathcal{T}^{\perp}$ is closed under extensions. For $M\in\Sub_{\Theta}(\mathcal{T}^{\perp})$, there exists a $\Theta$-inflation $f:M\rightarrowtail N$ with $N\in\mathcal{T}^{\perp}$. By Lemma \ref{L-2-1}, there exists an $\mathbb{E}$-triangle  $T\stackrel{a}\rightarrowtail M\twoheadrightarrow F\stackrel{}\dashrightarrow$ with $T\in \mathcal{T}$ and $F\in\mathcal{T}^{\perp}$. Note that $fa=0$ is a $\Theta$-inflation, this implies $T=0$ and thus $M\cong F\in \mathcal{T}^{\perp}$. Hence, we have shown that $\mathcal{T}^{\perp}\in\torf_{\Theta}(\mathcal{A})$. Dually, if $\mathcal{F}\in\torf(\mathcal{A})$, then $^{\perp}\mathcal{F}\in\tors(\mathcal{A})$ by a similar argument as above. By Lemma \ref{L-3-3} and Lemma \ref{L-2-1}, it is easy to see that $(-)^{\perp}$ and ${^{\perp}(-)}$ are mutually inverse bijections.
\end{proof}

\subsection{Specifying a torsion class} Let $\mathcal{X}$ be a class of objects in $\mathcal{A}$. We denote by $\Filt_{\Theta}(\mathcal{X})$ the subcategory consisting of all objects $M$ admitting a finite filtration of the form
\begin{equation*}
0=M_{0}\stackrel{f_{0}}\rightarrowtail M_{1}\stackrel{f_{1}}\rightarrowtail M_{2}\rightarrowtail\cdots\stackrel{f_{n-1}}\rightarrowtail M_{n}=M
\end{equation*}
with $f_{i}$ being a $\Theta$-inflation and $\cone(f_{i})\in\mathcal{X}$ for any $0\leq i\leq n-1$. For each object $M\in\Filt_{\Theta}(\mathcal{X})$, the minimal length of such filtration of $M$ is denoted by $l_{\mathcal{X}}^{\Theta}(M)$. Clearly,  $\Filt_{\Theta}(\mathcal{X})$ is a subcategory of  $\mathbf{Filt_{\mathcal{A}}(\mathcal{X})}$.  If $\Theta$ is stable, then  $\Filt_{\Theta}(\mathcal{X})=\mathbf{Filt_{\mathcal{A}}(\mathcal{X})}$.

\begin{remark}\label{4-8}First of all, we note that   $((\mathcal{A}, \mathbb{E}_{\Theta},\mathfrak{s}_{\Theta}),\Theta)$  is a stable length category by Proposition \ref{11-11}. By construction, $\Filt_{\Theta}(\mathcal{X})$ is precisely the filtration subcategory generated by $\mathcal{X}$ in the extriangulated category $(\mathcal{A}, \mathbb{E}_{\Theta},\mathfrak{s}_{\Theta})$. Hence, we can apply Proposition \ref{p-1-1} to $\Filt_{\Theta}(\mathcal{X})$.
\end{remark}

For a subcategory $\mathcal{S}\subseteq \mathcal{A}$, we define
$$\mathrm{T}_{\Theta}(\mathcal{S}):=\bigcap_{\mathcal{T}\in{\rm tors_{\Theta}}(\mathcal{A});\mathcal{S}\subseteq\mathcal{ T}}\mathcal{T}~\text{and}~\mathrm{F}_{\Theta}(\mathcal{S}):=\bigcap_{\mathcal{F}\in{\rm torf_{\Theta}}(\mathcal{A});\mathcal{S}\subseteq\mathcal{F}}\mathcal{F}.$$
Clearly, $\mathrm{T}_{\Theta}(\mathcal{S})$ and $\mathrm{F}_{\Theta}(\mathcal{S})$ are the smallest torsion class and the smallest torsion-free class containing $\mathcal{S}$, respectively. It is well known that $\mathrm{T}=\Filt\circ\Fac$ and $\mathrm{F}=\Filt\circ\Sub$ in the setting of finitely generated module category. We can generalize this result as follows.
\begin{proposition}\label{P-2-6} Let $\mathcal{S}$ be a subcategory of $\mathcal{A}$. Then $$\mathrm{T}_{\Theta}(\mathcal{S})=\Filt_{\Theta}(\Fac_{\Theta}(\mathcal{S}))~{\text and}~\mathrm{F}_{\Theta}(\mathcal{S})=\Filt_{\Theta}(\Sub_{\Theta}(\mathcal{S})).$$
\end{proposition}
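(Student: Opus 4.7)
The plan is to prove the two displayed equalities separately; since the second is dual to the first (reverse all arrows and swap $\Fac_{\Theta}$ with $\Sub_{\Theta}$), I focus on $\mathrm{T}_{\Theta}(\mathcal{S})=\Filt_{\Theta}(\Fac_{\Theta}(\mathcal{S}))$. The routine inclusion is $\mathrm{T}_{\Theta}(\mathcal{S})\supseteq\Filt_{\Theta}(\Fac_{\Theta}(\mathcal{S}))$: every torsion class $\mathcal{T}$ containing $\mathcal{S}$ must contain $\Fac_{\Theta}(\mathcal{S})$ by Definition \ref{Torsion}, and is also closed under $\Theta$-stable extensions, so it contains $\Filt_{\Theta}(\Fac_{\Theta}(\mathcal{S}))$; intersecting over all such $\mathcal{T}$ gives the claim.

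The substantive direction is $\mathrm{T}_{\Theta}(\mathcal{S})\subseteq\Filt_{\Theta}(\Fac_{\Theta}(\mathcal{S}))$, for which I would show that $\Filt_{\Theta}(\Fac_{\Theta}(\mathcal{S}))$ is itself a torsion class. It plainly contains $\mathcal{S}$ and is closed under $\Theta$-stable extensions by the definition of $\Filt_{\Theta}$, so the only nontrivial step is closure under $\Theta$-quotients: given a $\Theta$-deflation $p\colon M\twoheadrightarrow N$ with $M\in\Filt_{\Theta}(\Fac_{\Theta}(\mathcal{S}))$, I must show $N\in\Filt_{\Theta}(\Fac_{\Theta}(\mathcal{S}))$. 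My approach is induction on $n:=l^{\Theta}_{\Fac_{\Theta}(\mathcal{S})}(M)$. For $n=1$ there is a $\Theta$-deflation $S\twoheadrightarrow M$ with $S\in\mathcal{S}$; since composition of deflations is a deflation in the stable extriangulated category $(\mathcal{A},\mathbb{E}_{\Theta},\mathfrak{s}_{\Theta})$ of Proposition \ref{11-11} (via $\mathrm{(ET4)}^{\rm op}$), the composite $S\twoheadrightarrow M\twoheadrightarrow N$ is a $\Theta$-deflation and $N\in\Fac_{\Theta}(\mathcal{S})$.

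For the inductive step, I would apply Proposition \ref{p-1-1}(3) inside $(\mathcal{A},\mathbb{E}_{\Theta},\mathfrak{s}_{\Theta})$ (legitimate by Remark \ref{4-8}) to obtain a stable $\mathbb{E}$-triangle $X\stackrel{q}{\rightarrowtail}M\stackrel{r}{\twoheadrightarrow}M'\dashrightarrow$ with $X\in\Fac_{\Theta}(\mathcal{S})$ and $l^{\Theta}_{\Fac_{\Theta}(\mathcal{S})}(M')=n-1$. By $\Theta$-admissibility (Definition \ref{Def}) the composite $pq\colon X\to N$ decomposes as $X\stackrel{i}{\twoheadrightarrow}Y\stackrel{j}{\rightarrowtail}N$, and since $\Fac_{\Theta}(\mathcal{S})$ is closed under $\Theta$-quotients (again by composition of $\Theta$-deflations), $Y\in\Fac_{\Theta}(\mathcal{S})$. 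Completing $j$ to a stable $\mathbb{E}$-triangle $Y\rightarrowtail N\stackrel{\pi}{\twoheadrightarrow}Z\dashrightarrow$, the identity $\pi\circ p\circ q=\pi j i=0$ forces $\pi p$ to factor through $r$ via some $\overline{p}\colon M'\to Z$. The map $\pi p$ is a composition of two $\Theta$-deflations, hence itself a $\Theta$-deflation; applying Lemma \ref{4-77} to $\overline{p}\circ r=\pi p$ thus yields that $\overline{p}$ is a $\Theta$-deflation. The induction hypothesis then places $Z$ in $\Filt_{\Theta}(\Fac_{\Theta}(\mathcal{S}))$, and the stable $\mathbb{E}$-triangle $Y\rightarrowtail N\twoheadrightarrow Z$ with $Y\in\Fac_{\Theta}(\mathcal{S})$ places $N$ in $\Filt_{\Theta}(\Fac_{\Theta}(\mathcal{S}))$, completing the induction.

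The torsion-free equality is handled by the dual argument, invoking the first half of Lemma \ref{4-77} instead of the second and using the first conflation of Proposition \ref{p-1-1}(3) in place of the second. I anticipate the main obstacle to be the inductive step above, specifically the construction of the factorization $\overline{p}\colon M'\to Z$ and the verification that it is a $\Theta$-deflation; the essential trick is to orient the equality $\pi p=\overline{p}\,r$ so that Lemma \ref{4-77} applies, which is precisely why the $\Theta$-admissible decomposition of $pq$ supplied by Definition \ref{Def} is indispensable.
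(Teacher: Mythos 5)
Your proof is correct and follows essentially the same route as the paper's own argument: take a stable $\mathbb{E}$-triangle $X\rightarrowtail M\twoheadrightarrow M'$ of minimal length one less, decompose the composite of the inflation with the given deflation via $\Theta$-admissibility, form the resulting commutative ladder, and invoke Lemma \ref{4-77} to see that the induced map $M'\to Z$ is a $\Theta$-deflation before closing by induction. The only cosmetic difference is that the paper obtains the vertical map $h\colon S'\to K$ by completing a morphism of $\mathbb{E}$-triangles, whereas you construct $\overline{p}\colon M'\to Z$ explicitly from the factorization property of $\mathbb{E}$-triangles; these produce the same morphism.
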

\begin{proof}

Firstly, we have $\Filt_{\Theta}(\Fac_{\Theta}(\mathcal{S}))\ast_{\Theta}\Filt_{\Theta}(\Fac_{\Theta}(\mathcal{S}))\subseteq\Filt_{\Theta}(\Fac_{\Theta}(\mathcal{S}))$  by Remark \ref{4-8}. It suffices to prove $M\in\Filt_{\Theta}(\Fac_{\Theta}(\mathcal{S}))$ for any $\Theta$-deflation $f:S\twoheadrightarrow M$ with $S\in \Filt_{\Theta}(\Fac_{\Theta}(\mathcal{S}))$. If $S\in\Fac_{\Theta}(\mathcal{S})$, then so is $M$. For the general case, by Remark \ref{4-8}, there exists an $\mathbb{E}$-triangle
 $$X_{1}\stackrel{g}\rightarrowtail S\stackrel{}\twoheadrightarrow S'\stackrel{}\dashrightarrow$$
with $X_{1}\in\Fac_{\Theta}(\mathcal{S})$ and $l_{\mathcal{X}}^{\Theta}(S')=l_{\mathcal{X}}^{\Theta}(S)-1$. Take a $\Theta$-decomposition $(i_{gf},X_{gf},j_{gf})$ of $gf$. We have the following commutative diagram
\begin{equation*}
\xymatrix{
  X_{1}~\ar@{->>}[d]^{i_{gf}} \ar@{>->}[r]^-{g} & S\ar@{->>}[d]_-{f} \ar@{->>}[r]^-{cf} & S' \ar[d]_{h} \ar@{-->}[r]^-{}  &  \\
  X_{gf}~ \ar@{>->}[r]^-{j_{gf}} & M \ar@{->>}[r]^-{c} & K \ar@{-->}[r]^-{}&.  }
\end{equation*}
Note that $h$ is a $\Theta$-deflation by Lemma \ref{4-77}. By  induction hypothesis, we get $X_{gf},K\in\Filt_{\Theta}(\Fac_{\Theta}(\mathcal{S}))$ and so is $M$.  We have shown that $\mathrm{T}_{\Theta}(\mathcal{S})=\Filt_{\Theta}(\Fac_{\Theta}(\mathcal{S}))$. Dually, we can prove $\mathrm{F}_{\Theta}(\mathcal{S})=\Filt_{\Theta}(\Sub_{\Theta}(\mathcal{S}))$.
\end{proof}

\subsection{Lattice of torsion classes}
First of all, we recall some notations from poset theory.

Let $P=(P,\leq)$ be a partially ordered set and let $x,y\in P$.

$\bullet$ We say an element $z\in P$ is a {\em join} of $x$ and $y$, denoted by $x\vee y$, if $z$ is the smallest element in the set $\{t\in P~|~t\geq x,t\geq y\}$. The {\em meet} $x\wedge y$ of $x$ and $y$ is the largest element in the set $\{t\in P~|~t\leq x,t\leq y\}$. We remark that the join or meet of two elements may not exist. For a subset $L$ of $P$, the join (resp. meet) of $L$ is the element $\bigvee L:=\bigvee_{L_{i}\in L} L_{i}$ (resp. $\bigwedge L:=\bigwedge_{L_{i}\in L} L_{i}$). Obviously, $\bigvee L$ (resp. $\bigwedge L$) is the unique minimal ({resp. maximal}) common upper (resp. lower) bound of elements in $L$.

$\bullet$ We say  $P$ is a {\em complete lattice} if every subset $S$ of $P$ admits  join and meet.  A morphism $\alpha:P\rightarrow L$ of complete lattices is a map such that $\alpha(\bigvee L)=\bigvee(\alpha(L))$ and $\alpha(\bigwedge L)=\bigwedge(\alpha(L))$ for any subset $L$ of $P$.

$\bullet$ Let $P$ be a complete lattice. We say an element $x\in P$ is {\em compact} if for any $L\subseteq P$ such that $x\leq\bigvee L$, there exists a finite subset $S\subseteq L$ such that $x\leq\bigvee_{S_{i}\in S}S_{i}$.

$\bullet$ The {\em interval} is a subset of $P$ which has the form $[x,y]:=\{z\in P~|~x\leq z\leq y\}$ for some $x,y\in P$. We say that $y$ is a {\em cover} of $x$ if $[x,y]=\{x,y\}$. The {\em Hasse quiver} Hasse$(P)$ has $P$ as the set of vertices and for any $x,y\in P$, there is an arrow $y\rightarrow x$ if and only if $y$ is a  cover of $x$.

Obviously, $\tors_{\Theta} (\mathcal{A})$ and $\torf_{\Theta} (\mathcal{A})$ are partially ordered sets with respect to the inclusion relation. The following basic observation on $\tors_{\Theta} (\mathcal{A})$ is useful.
\begin{lemma}\label{L-4-9} Let $\mathcal{U},\mathcal{T}$ be two torsion classes in $(\mathcal{A},\Theta)$. We have $\mathcal{U}\wedge\mathcal{T}=\mathcal{U}\cap\mathcal{T}$ and $\mathcal{U}\vee\mathcal{T}=\Filt_{\Theta}(\mathcal{U}\cup\mathcal{V})$.
\end{lemma}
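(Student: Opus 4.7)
The plan is to handle the meet and join separately, both by reducing to the characterizations of torsion classes already established. For the meet, I will verify directly that $\mathcal{U}\cap\mathcal{T}$ is itself a torsion class; then, since any torsion class contained in both $\mathcal{U}$ and $\mathcal{T}$ is automatically contained in $\mathcal{U}\cap\mathcal{T}$, it will follow that this intersection is the largest lower bound. Concretely, given a $\Theta$-deflation $M\twoheadrightarrow N$ with $M\in\mathcal{U}\cap\mathcal{T}$, closure of each of $\mathcal{U}$ and $\mathcal{T}$ under $\Fac_{\Theta}$ forces $N\in\mathcal{U}\cap\mathcal{T}$; and given a $\Theta$-stable $\mathbb{E}$-triangle $A\rightarrowtail M\twoheadrightarrow B\dashrightarrow$ with $A,B\in\mathcal{U}\cap\mathcal{T}$, closure of each of $\mathcal{U}$ and $\mathcal{T}$ under $\ast_{\Theta}$ yields $M\in\mathcal{U}\cap\mathcal{T}$. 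This uses exactly the definition of a torsion class from Definition~\ref{Torsion}.

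For the join, I will identify $\mathcal{U}\vee\mathcal{T}$ with the smallest torsion class containing $\mathcal{U}\cup\mathcal{T}$, namely $\mathrm{T}_{\Theta}(\mathcal{U}\cup\mathcal{T})$, and then apply Proposition~\ref{P-2-6} to rewrite this as $\Filt_{\Theta}(\Fac_{\Theta}(\mathcal{U}\cup\mathcal{T}))$. The key observation is that because $\mathcal{U}$ and $\mathcal{T}$ are each closed under $\Theta$-quotients, we have
\[
\Fac_{\Theta}(\mathcal{U}\cup\mathcal{T})=\Fac_{\Theta}(\mathcal{U})\cup\Fac_{\Theta}(\mathcal{T})=\mathcal{U}\cup\mathcal{T},
\]
the first equality holding because a $\Theta$-deflation $X\twoheadrightarrow M$ with $X\in\mathcal{U}\cup\mathcal{T}$ necessarily has $X$ in one of the two pieces. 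Combining these identities yields $\mathcal{U}\vee\mathcal{T}=\Filt_{\Theta}(\mathcal{U}\cup\mathcal{T})$, as claimed.

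The only mildly subtle step is confirming that $\Filt_{\Theta}(\mathcal{U}\cup\mathcal{T})$ really is a torsion class, but this is exactly the content of Proposition~\ref{P-2-6}: once the $\Fac_{\Theta}$ step has been absorbed into $\mathcal{U}\cup\mathcal{T}$, iterating $\Filt_{\Theta}$ over a class that is already $\Fac_{\Theta}$-closed produces a torsion class. Everything else is a standard minimality argument for the join in a complete lattice. I do not anticipate any genuine obstacle; the main point to watch is that $\Theta$-stability must be carried through the extension step, which is precisely what the subscript in $\ast_{\Theta}$ and $\Filt_{\Theta}$ is designed to track, and Remark~\ref{4-8} together with Proposition~\ref{11-11} ensures that all the required extriangulated arguments remain valid for the induced structure $(\mathcal{A},\mathbb{E}_{\Theta},\mathfrak{s}_{\Theta})$.
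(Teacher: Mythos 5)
Your proof is correct and takes essentially the same route as the paper: the meet is handled by noting that $\tors_{\Theta}(\mathcal{A})$ is closed under intersections, and the join is reduced to Proposition \ref{P-2-6} after observing that $\Fac_{\Theta}(\mathcal{U}\cup\mathcal{T})=\mathcal{U}\cup\mathcal{T}$. The paper's proof is a one-line citation of these two facts; your version simply fills in the (correct) details.
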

\begin{proof} The first statement follows from the fact that $\tors_{\Theta} \mathcal{A}$ is closed under intersections. The second one follows from Proposition \ref{P-2-6}.
\end{proof}

\begin{definition}\label{semi} (\cite[Definition 2.8]{De}) We say a complete lattice $P$ is {\em completely semidistributive} if  for any $x\in P$ and $L\subseteq P$, the following hold:
\begin{itemize}
   \item [(1)] If $x\wedge y=x\wedge z$ for any $y,z\in L$, then $x\wedge(\bigvee L)=x\wedge y$ for any $y\in L$.
   \item [(2)] If $x\vee y=x\vee z$ for any $y,z\in L$, then $x\vee(\bigwedge L)=x\vee y$ for any $y\in L$.
  \end{itemize}
\end{definition}
\begin{definition}\label{semi} (\cite[Definition 2.9]{De}) We say that a complete lattice $P$ is {\em  algebraic} if for any $x\in P$, there exists a set $L$ of compact objects of $P$ such that $x=\bigvee L$.
\end{definition}

\begin{theorem}\label{main2} Let $(\mathcal{A},\Theta)$ be a  length category.

$(1)$ The set $\tors_{\Theta}(\mathcal{A})$ is a complete lattice with joins and meets given by
$$\bigvee \mathcal{L}={^{\perp}}(\bigcap_{\mathcal{T}_{i}\in \mathcal{L}} {\mathcal{T}_{i}}^{\perp})=\Filt_{\Theta}(\bigcup_{\mathcal{T}_{i}\in \mathcal{L}} \mathcal{T}_{i})~\text{and}~\bigwedge\mathcal{L}=\bigcap_{\mathcal{T}_{i}\in \mathcal{L}} {\mathcal{T}_{i}}$$
for any $\mathcal{L}\subseteq \tors_{\Theta}(\mathcal{A})$.

$(2)$ The set $\torf_{\Theta}(\mathcal{A})$ is a complete lattice with joins and meets given by
$$\bigvee \mathcal{L}'=(\bigcap_{\mathcal{F}_{i}\in \mathcal{L}'} {^{\perp}}{\mathcal{F}_{i}}){^{\perp}}=\Filt_{\Theta}(\bigcup_{\mathcal{F}_{i}\in \mathcal{L}'} \mathcal{F}_{i})~\text{and}~\bigwedge\mathcal{L}'=\bigcap_{\mathcal{F}_{i}\in \mathcal{L}'} {\mathcal{F}_{i}}$$
for any $\mathcal{L}'\subseteq \torf_{\Theta}(\mathcal{A})$.

$(3)$ We have an isomorphism of complete lattices $\tors_{\Theta} (\mathcal{A})\stackrel{}{\longrightarrow}\torf_{\Theta} (\mathcal{A})^{{\rm op}};~\mathcal{T}\mapsto \mathcal{T}^{\perp}$.

$(4)$  The complete lattice $\tors_{\Theta}(\mathcal{A})$ is  completely semidistributive.

$(5)$  The complete lattice $\tors_{\Theta}(\mathcal{A})$ is algebraic.
\end{theorem}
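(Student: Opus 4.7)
The plan is to verify each of the five statements by leveraging the characterization $\mathrm{T}_{\Theta}(\mathcal{S}) = \Filt_{\Theta}(\Fac_{\Theta}(\mathcal{S}))$ from Proposition \ref{P-2-6}, the torsion/torsion-free bijection from Proposition \ref{L-2-2}, and the basic filtration decompositions of Proposition \ref{p-1-1} (applied in the stable structure $(\mathcal{A}, \mathbb{E}_{\Theta}, \mathfrak{s}_{\Theta})$ via Remark \ref{4-8}).

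\medskip

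For parts (1), (2) and (3), I would first verify that an intersection of torsion classes is again a torsion class, since both $\Fac_{\Theta}$-closure and $\ast_{\Theta}$-closure pass to intersections; this yields the meet formulas. For the join in $\tors_{\Theta}(\mathcal{A})$, each $\mathcal{T}_{i}$ is already $\Fac_{\Theta}$-closed, so $\bigcup_{i} \mathcal{T}_{i}$ is too, and Proposition \ref{P-2-6} gives $\mathrm{T}_{\Theta}(\bigcup_{i} \mathcal{T}_{i}) = \Filt_{\Theta}(\bigcup_{i} \mathcal{T}_{i})$, which is the smallest torsion class containing every $\mathcal{T}_{i}$. The equivalent description $^{\perp}(\bigcap_{i} \mathcal{T}_{i}^{\perp})$ follows from Proposition \ref{L-2-2}, since $\bigcap_{i} \mathcal{T}_{i}^{\perp}$ is the meet in $\torf_{\Theta}(\mathcal{A})$ and taking left-perpendicular recovers the corresponding torsion class. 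Part (2) is dual. For (3), Proposition \ref{L-2-2} already furnishes an order-reversing bijection, so it is automatically a complete lattice anti-isomorphism with respect to the joins and meets identified in (1) and (2).

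\medskip

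Part (4) is the main obstacle. Fix $\mathcal{T} \in \tors_{\Theta}(\mathcal{A})$ and $\mathcal{L} = \{\mathcal{U}_{i}\}_{i \in I}$ with common meet $\mathcal{W} = \mathcal{T} \cap \mathcal{U}_{i}$ for every $i$. The inclusion $\mathcal{W} \subseteq \mathcal{T} \wedge \bigvee_{i} \mathcal{U}_{i}$ is clear, so only the reverse is at stake. I would argue by contradiction: assuming the interval $[\mathcal{W}, \mathcal{T} \wedge \bigvee_{i} \mathcal{U}_{i}]$ is nontrivial, I would invoke the brick-labeling characterization of intervals (developed in Theorem D, whose first part is logically independent of semidistributivity) to produce a brick $S \in \mathcal{W}^{\perp} \cap \mathcal{T} \cap \bigvee_{i} \mathcal{U}_{i}$. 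Since $S \in \Filt_{\Theta}(\bigcup_{i} \mathcal{U}_{i})$, analyzing the canonical torsion triangles $t_{\mathcal{U}_{i}}(S) \rightarrowtail S \twoheadrightarrow S/t_{\mathcal{U}_{i}}(S)$ from Lemma \ref{L-2-1} together with the fact that $S$ is a brick should force $S \in \mathcal{U}_{j_{0}}$ for some index $j_{0}$, and hence $S \in \mathcal{T} \cap \mathcal{U}_{j_{0}} = \mathcal{W}$, contradicting $S \in \mathcal{W}^{\perp}$. The dual join-semidistributivity statement is transported from $\torf_{\Theta}(\mathcal{A})$ via the anti-isomorphism of (3). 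The main difficulty here is that torsion classes are not closed under subobjects, so a direct induction on the filtration length of an object in $\mathcal{T} \cap \Filt_{\Theta}(\bigcup_{i} \mathcal{U}_{i})$ cannot propagate membership in $\mathcal{T}$; the brick-interval argument circumvents this by isolating a minimal witness.

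\medskip

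For (5), the candidate compact elements are those of the form $\mathrm{T}_{\Theta}(M)$ for a single object $M \in \mathcal{A}$. Given $\mathrm{T}_{\Theta}(M) \subseteq \bigvee_{i} \mathcal{T}_{i} = \Filt_{\Theta}(\bigcup_{i} \mathcal{T}_{i})$, any $\Theta$-filtration of $M$ has finite length and so involves only finitely many indices $i_{1}, \dots, i_{k}$; therefore $M \in \Filt_{\Theta}(\bigcup_{j=1}^{k} \mathcal{T}_{i_{j}}) = \bigvee_{j=1}^{k} \mathcal{T}_{i_{j}}$, which forces $\mathrm{T}_{\Theta}(M) \subseteq \bigvee_{j=1}^{k} \mathcal{T}_{i_{j}}$ by minimality. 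Finally, every $\mathcal{T} \in \tors_{\Theta}(\mathcal{A})$ visibly equals $\bigvee_{M \in \mathcal{T}} \mathrm{T}_{\Theta}(M)$, which exhibits it as a join of compact elements and proves that the lattice is algebraic.
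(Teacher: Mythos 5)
Your treatment of (1)--(3) and (5) matches the paper's proof: meets are intersections (torsion classes are closed under arbitrary intersection), joins are $\Filt_{\Theta}$ of the union via Proposition~\ref{P-2-6}, the anti-isomorphism is Proposition~\ref{L-2-2}, and compactness of $\mathrm{T}_{\Theta}(T)$ comes from finiteness of $\Theta$-filtrations. These parts are fine.

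Part (4) is where the proposal has a genuine gap. From a brick $S\in\mathcal{W}^{\perp}\cap\mathcal{T}\cap\Filt_{\Theta}(\bigcup_i\mathcal{U}_i)$ you cannot conclude that $S$ lies in a single $\mathcal{U}_{j_0}$. Being a brick constrains $\End_{\mathcal{A}}(S)$, not the shape of a $\Theta$-filtration of $S$; nothing prevents $S$ from being a nontrivial stable extension of objects coming from distinct $\mathcal{U}_i$'s, with every canonical torsion triangle $t_{\mathcal{U}_i}(S)\rightarrowtail S\twoheadrightarrow S/t_{\mathcal{U}_i}(S)$ having both terms nonzero. Your own hedge (``should force'') flags exactly this missing step, and I do not see how to fill it along those lines.

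Moreover, the objection you raise against a direct filtration induction --- that torsion classes are not closed under subobjects, so membership in $\mathcal{T}$ ``cannot propagate'' --- does not in fact block the straightforward argument, which is what the paper uses. The key observation you are missing is that the common meet $\mathcal{W}=\mathcal{T}\wedge\mathcal{U}_i$ lies inside \emph{every} $\mathcal{U}_i$. Induct on $l_{\mathcal{X}}(L)$ for $L\in\mathcal{T}\cap\Filt_{\Theta}(\mathcal{X})$ where $\mathcal{X}=\bigcup_i\mathcal{U}_i$: by Proposition~\ref{p-1-1}(3) via Remark~\ref{4-8} there is a stable $\mathbb{E}$-triangle $X\rightarrowtail L\twoheadrightarrow M$ with $X\in\mathcal{X}$, say $X\in\mathcal{U}_{j_0}$, and $l_{\mathcal{X}}(M)<l_{\mathcal{X}}(L)$. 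Then $M\in\Fac_{\Theta}(L)\subseteq\mathcal{T}$ and $M\in\Filt_{\Theta}(\mathcal{X})$, so by induction $M\in\mathcal{W}\subseteq\mathcal{U}_{j_0}$. Now both $X$ and $M$ lie in $\mathcal{U}_{j_0}$, which is extension-closed, so $L\in\mathcal{U}_{j_0}$; together with $L\in\mathcal{T}$ this gives $L\in\mathcal{T}\cap\mathcal{U}_{j_0}=\mathcal{W}$. Notice that extension-closure is invoked for $\mathcal{U}_{j_0}$, not for $\mathcal{T}$, and closure under subobjects is never needed. You should replace the brick reduction with this induction.
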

\begin{proof} (1)--(3) Since $\tors_{\Theta} \mathcal{A}$ and $\torf_{\Theta} \mathcal{A}$ are closed under arbitrary intersections, we clearly have
$$\bigwedge\mathcal{L}=\bigcap_{\mathcal{T}_{i}\in \mathcal{L}} {\mathcal{T}_{i}}~\text{and}~\bigwedge\mathcal{L}'=\bigcap_{\mathcal{F}_{i}\in \mathcal{L}'} {\mathcal{F}_{i}}.$$
Moreover, by Proposition \ref{L-2-2} and Lemma \ref{L-4-9}, one can easily check that
$$\bigvee \mathcal{L}={^{\perp}}(\bigcap_{\mathcal{T}_{i}\in \mathcal{L}} {\mathcal{T}_{i}}^{\perp})=\Filt_{\Theta}(\bigcup_{\mathcal{T}_{i}\in \mathcal{L}} \mathcal{T}_{i})~\text{and}~\bigvee \mathcal{L}'=(\bigcap_{\mathcal{F}_{i}\in \mathcal{L}'} {^{\perp}}{\mathcal{F}_{i}}){^{\perp}}=\Filt_{\Theta}(\bigcup_{\mathcal{F}_{i}\in \mathcal{L}'} \mathcal{F}_{i}).$$
Hence, $\tors_{\Theta}(\mathcal{A})$ and $\torf_{\Theta}(\mathcal{A})$ are complete lattices. By Proposition \ref{L-2-2} again, it is easy to see that $\tors (\mathcal{A})\stackrel{}{\rightarrow}\torf (\mathcal{A})^{{\rm op}};~\mathcal{T}\mapsto \mathcal{T}^{\perp}$ is an isomorphism of  complete lattices.

$(4)$ Let $\mathcal{U}\in\tors_{\Theta}(\mathcal{A})$ and $\mathcal{L}\subseteq\tors_{\Theta}(\mathcal{A})$ such that $\mathcal{U}\wedge \mathcal{T}=\mathcal{U}\wedge \mathcal{V}$ for any $\mathcal{T},\mathcal{V}\in \mathcal{L}$. The inclusion $\mathcal{U}\wedge \mathcal{T}\subseteq\mathcal{U}\wedge (\bigvee\mathcal{L})$ is obvious. For the other one, by Lemma \ref{L-4-9}, we have $\mathcal{U}\wedge (\bigvee\mathcal{L})=\mathcal{U}\cap\Filt_{\mathcal{A}}(\mathcal{X})$, where $\mathcal{X}=\bigcup_{\mathcal{L}_{i}\in \mathcal{L}} \mathcal{L}_{i}$. For any $0\neq L\in\mathcal{U}\wedge (\bigvee\mathcal{L})$, there exists a stable $\mathbb{E}$-triangle  $X\stackrel{}\rightarrowtail L\stackrel{}\twoheadrightarrow M\dashrightarrow$ with $X\in\mathcal{X}$ and $l_{\mathcal{X}}(M)=l_{\mathcal{X}}(L)-1$. If $l_{\mathcal{X}}(L)=1$, then $L\in \mathcal{U}\cap \mathcal{L}_{i}=\mathcal{U}\wedge \mathcal{T}$ for some $\mathcal{L}_{i}\in \mathcal{L}$. For the general case, we observe that $M\in\mathcal{U}\wedge (\bigvee\mathcal{L})$ since $\mathcal{U}$ is a torsion class. By induction hypothesis, we have $M\in\mathcal{U}\wedge \mathcal{T}$ and so is $L$. By duality, we can prove the second condition in Definition \ref{semi}.

$(5)$ For any $\mathcal{T}\in\tors_{\Theta}(\mathcal{A})$, we can check directly that $\mathcal{T}=\bigvee_{T\in\mathcal{T}} \mathrm{T}_{\Theta}(T)$. It suffices to prove $\mathrm{T}_{\Theta}(T)$ is  compact. Suppose that $\mathcal{L}\subseteq\tors_{\Theta}(\mathcal{A})$ and $\mathrm{T}_{\Theta}(T)\subseteq\mathcal{L}$. Since $T\in\bigvee\mathcal{L}=\Filt_{\Theta}(\bigcup_{\mathcal{L}_{i}\in \mathcal{L}}\mathcal{L}_{i})$, there exists a finite set $\mathcal{S}=\{\mathcal{L}_{1},\mathcal{L}_{2},\cdots,\mathcal{L}_{n}\}\subseteq \mathcal{L}$ such that $T\in \mathcal{L}_{1}\ast_{\Theta}\mathcal{L}_{2}\ast_{\Theta}\cdots \ast_{\Theta}\mathcal{L}_{n}$. Thus $\mathrm{T}_{\Theta}(T)\subseteq\mathrm{T}_{\Theta}(\bigcup_{i=1}^{n}L_{i})=\bigvee\mathcal{S}$. This shows that $\mathrm{T}_{\Theta}(T)$ is  compact.
 \end{proof}

\subsection{Brick labelling}
For any $\mathcal{U},\mathcal{T}\in\tors_{\Theta}(\mathcal{A})$, we denote by {\rm brick}$(\mathcal{U}^{\perp}\cap \mathcal{T})$ the set of isomorphism classes of bricks in $\mathcal{U}^{\perp}\cap \mathcal{T}$.
\begin{lemma}\label{L-2-3} Let $\mathcal{U}\subseteq\mathcal{T}$ be two torsion classes in $\mathcal{A}$. For any $0\neq M\in \mathcal{U}^{\perp}\cap \mathcal{T}$, there exists a non-zero endomorphism $f:M\rightarrow M$ admitting a $\Theta$-decomposition $(i_{f},X_{f},j_{f})$ with $X_{f}\in{\rm brick }(\mathcal{U}^{\perp}\cap \mathcal{T})$.
\end{lemma}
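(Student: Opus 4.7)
The plan is to use a minimality argument. Let $n:=\min\{\Theta(X_f)\mid 0\neq f\in\End(M),\ (i_f,X_f,j_f)\text{ a }\Theta\text{-decomposition of }f\}$, which is well-defined since $1_M$ is such an endomorphism (with $X=M$). Fix $f_0$ achieving this minimum, with $\Theta$-decomposition $(i_{f_0},X_{f_0},j_{f_0})$.

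First I would verify that $X_{f_0}\in\mathcal{U}^{\perp}\cap\mathcal{T}$. Since $M\in\mathcal{T}$ and $i_{f_0}\colon M\twoheadrightarrow X_{f_0}$ is a $\Theta$-deflation, $X_{f_0}\in\Fac_{\Theta}(\mathcal{T})\subseteq\mathcal{T}$ by the torsion-class property. By Proposition~\ref{L-2-2}, $\mathcal{U}^{\perp}$ is a torsion-free class, hence closed under $\Theta$-subobjects; the $\Theta$-inflation $j_{f_0}\colon X_{f_0}\rightarrowtail M\in\mathcal{U}^{\perp}$ then gives $X_{f_0}\in\mathcal{U}^{\perp}$.

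Next I would show that $X_{f_0}$ is a brick. Take any non-zero $g\colon X_{f_0}\to X_{f_0}$ with $\Theta$-decomposition $(i_g,X_g,j_g)$ and form $h:=j_{f_0}gi_{f_0}\colon M\to M$. Using Lemma~\ref{L-1} (composition of $\Theta$-inflations is a $\Theta$-inflation, and dually for deflations), the tuple $(i_gi_{f_0},X_g,j_{f_0}j_g)$ is a $\Theta$-decomposition of $h$. If $h\neq 0$, the minimality of $n$ forces $\Theta(X_g)\geq n$; together with $\Theta(X_g)\leq\Theta(X_{f_0})=n$ (coming from the $\Theta$-inflation $j_g\colon X_g\rightarrowtail X_{f_0}$), equality holds, and then Lemma~\ref{L2-4} makes both $i_g$ and $j_g$ isomorphisms, so $g$ is an isomorphism.

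The hard case is $h=0$. In an abelian length category this case is vacuous, because $j_{f_0}$ is monic and $i_{f_0}$ is epic, so $j_{f_0}gi_{f_0}=0$ would force $g=0$; but in an extriangulated category inflations need not be monic and deflations need not be epic, so this is the main obstacle. To handle it, I would push the minimality further: the same argument shows that for every $\Theta$-deflation $\gamma\colon M\twoheadrightarrow X_g$, $\Theta$-inflation $\delta\colon X_g\rightarrowtail M$ and endomorphism $\psi\in\End(X_g)$ with $\Theta(X_g)<n$, the composite $\delta\psi\gamma$ must vanish (otherwise the $\Theta$-decomposition $(i_\psi\gamma,X_\psi,\delta j_\psi)$ has middle $\Theta$-value at most $\Theta(X_g)<n$, contradicting the minimality). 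I would then combine this structural constraint with the Krull--Schmidt property of $\mathcal{A}$---for instance, by replacing $f_0$ with a refinement whose image becomes indecomposable or whose associated $\epsilon:=i_{f_0}j_{f_0}$ is an isomorphism (which would split the underlying conflation and reduce to an abelian-style argument)---to derive a contradiction with the assumption $g\neq 0$ non-iso, thereby completing the proof.
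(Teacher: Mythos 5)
Your approach---induction or minimality on the size $\Theta(X_f)$ of the middle term of a $\Theta$-decomposition, followed by composing decompositions---is exactly the paper's strategy (the paper inducts on $\Theta(M)$ and applies the inductive hypothesis to $X_f$, then forms the composite endomorphism $j_f\,g\,i_f$; your minimality formulation is a natural variant). Your verification that $X_{f_0}\in\mathcal U^\perp\cap\mathcal T$ and your use of Lemma~\ref{L-1} to compose inflations/deflations are both correct.

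However, you have correctly put your finger on a genuine difficulty that your proposal does not resolve: the case $h:=j_{f_0}\,g\,i_{f_0}=0$. In an extriangulated (non-exact) length category $\Theta$-inflations need not be monic and $\Theta$-deflations need not be epic, so nothing immediately forbids $j_{f_0}g\,i_{f_0}=0$ even with $g\neq 0$; indeed the paper's own write-up forms the very same composite and declares $(i_g i_f, X_g, j_g j_f)$ to be a $\Theta$-decomposition ``of $f$'' without addressing whether that composite is non-zero (and with a typographical slip: the inflation should read $j_f j_g$, and the endomorphism it decomposes is $j_f g i_f$, not the original $f$). So the issue you spotted is real and not an artifact of your reformulation. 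Unfortunately your proposed resolution of the hard case is only a sketch: ``push the minimality further'' derives only that certain composites vanish, and the appeal to Krull--Schmidt/idempotents is not worked out. A concrete attempt along those lines would be to set $u:=j_{f_0}j_g$, $v:=i_g i_{f_0}$, observe $uv=h=0$, and look at $vu\in\End(X_g)$: since $X_g$ is a brick, $vu$ is either an isomorphism---in which case $X_g$ splits off $M$ as a direct summand and the corresponding idempotent of $M$ is a non-zero endomorphism with $\Theta$-decomposition through $X_g$, finishing the proof---or $vu=0$, in which case the obstruction persists. As written, the proposal is incomplete: the ``hard case'' is identified but not closed, and without closing it the proof is not complete.
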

\begin{proof} If $M$ is a brick, then any non-zero endomorphism of $M$ is an isomorphism. Then the statement is trivial. Otherwise, there exists a non-zero non-isomorphic endomorphism $f:M\rightarrow M$. We take a $\Theta$-decomposition $(i_{f},X_{f},j_{f})$ of $f$. Since $M\in \mathcal{U}^{\perp}\cap \mathcal{T}$, we have $0\neq X_{f}\in\mathcal{U}^{\perp}\cap \mathcal{T}$. Note that $\Theta(X_{f})<\Theta(M)$ since $f$ is not an isomorphism. By induction hypothesis, there exists a non-zero endomorphism $g:X_{f}\rightarrow X_{f}$ admitting a $\Theta$-decomposition $(i_{g},X_{g},j_{g})$ with $X_{g}\in{\rm brick }(\mathcal{U}^{\perp}\cap \mathcal{T})$. Thus, we have a  $\Theta$-decomposition $(i_{g}i_{f},X_{g},j_{g}j_{f})$ of $f$ with $X_{g}\in{\rm brick }(\mathcal{U}^{\perp}\cap \mathcal{T})$.
\end{proof}

\begin{lemma}\label{L-2-4} Let $\mathcal{U}\subseteq\mathcal{T}$ be two torsion classes in $\mathcal{A}$. Then we have $$\mathcal{U}^{\perp}\cap \mathcal{T}=\Filt_{\Theta}( {\rm brick }(\mathcal{U}^{\perp}\cap \mathcal{T})).$$
\end{lemma}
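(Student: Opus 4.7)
Set $\mathcal{C}:=\mathcal{U}^\perp\cap\mathcal{T}$. One inclusion is immediate: by Lemma \ref{L-2-1}, $\mathcal{U}^\perp$ is a torsion-free class and is therefore closed under extensions by Lemma \ref{L-3-3}(2), as is $\mathcal{T}$, so $\mathcal{C}$ is closed under $\Theta$-stable extensions. Since ${\rm brick}(\mathcal{U}^\perp\cap\mathcal{T})\subseteq\mathcal{C}$, this yields $\Filt_\Theta({\rm brick}(\mathcal{U}^\perp\cap\mathcal{T}))\subseteq\mathcal{C}$.

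For the reverse inclusion I would argue by induction on $\Theta(M)$ for $M\in\mathcal{C}$. The case $\Theta(M)=0$ is trivial. If $M$ is a brick then $M\in{\rm brick}(\mathcal{C})$; and if $M=M_1\oplus M_2$ non-trivially, then $M_1,M_2\in\mathcal{C}$ because $\mathcal{T}$ is closed under quotients and $\mathcal{U}^\perp$ under subobjects, hence both under direct summands, and the induction hypothesis applied to the summands produces a direct-sum brick filtration of $M$.

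The essential case is when $M$ is indecomposable and not a brick. Lemma \ref{L-2-3} then furnishes a non-zero endomorphism $f:M\to M$ with $\Theta$-decomposition $(i_f,X_f,j_f)$ where $X_f\in{\rm brick}(\mathcal{C})$. Because $\End(M)$ is local and $f$ cannot be an isomorphism (otherwise $j_f$ would be split, forcing $M\cong X_f$ to be a brick), $f$ is nilpotent, say $f^n=0$. A short induction on $k$ gives $(i_fj_f)^{k+1}=i_ff^kj_f$, so $i_fj_f\in\End(X_f)$ is nilpotent, and since $X_f$ is a brick with division-ring endomorphism ring we conclude $i_fj_f=0$. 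The pivotal claim is that $K:=\cone(j_f)$ does not lie in $\mathcal{U}$: if it did, then $\Hom(K,X_f)=0$ because $X_f\in\mathcal{U}^\perp$, and applying $\Hom(-,X_f)$ to the $\Theta$-stable $\mathbb{E}$-triangle $X_f\rightarrowtail M\twoheadrightarrow K\dashrightarrow$ would render the composition-with-$j_f$ map $\Hom(M,X_f)\to\End(X_f)$ injective; but $i_f$ is non-zero and $i_fj_f=0$, a contradiction.

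With $K\notin\mathcal{U}$ established, the torsion decomposition $K_u\rightarrowtail K\twoheadrightarrow K_{u'}$ of $K$ relative to $(\mathcal{U},\mathcal{U}^\perp)$ has $K_{u'}\neq 0$. Pulling back $K_u\rightarrowtail K$ along $M\twoheadrightarrow K$ via the standard $3\times 3$ construction inside $(\mathcal{A},\mathbb{E}_\Theta,\mathfrak{s}_\Theta)$ produces an object $V$ fitting into $\Theta$-stable $\mathbb{E}$-triangles $X_f\rightarrowtail V\twoheadrightarrow K_u$ and $V\rightarrowtail M\twoheadrightarrow K_{u'}$. Then $V\in\mathcal{T}$ as an extension of $K_u\in\mathcal{U}\subseteq\mathcal{T}$ by $X_f\in\mathcal{T}$ and $V\in\mathcal{U}^\perp$ as a $\Theta$-subobject of $M$, while $K_{u'}\in\mathcal{T}$ as a $\Theta$-quotient of $K\in\mathcal{T}$ and $K_{u'}\in\mathcal{U}^\perp$ by construction; so both $V$ and $K_{u'}$ belong to $\mathcal{C}$ with $\Theta$ strictly smaller than $\Theta(M)$. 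The induction hypothesis then supplies brick filtrations of $V$ and $K_{u'}$ in $\mathcal{C}$, which splice through $V\rightarrowtail M\twoheadrightarrow K_{u'}$ to yield the required filtration of $M$. The principal obstacle is justifying $K\notin\mathcal{U}$, which is precisely where the brick hypothesis on $X_f$, via the division-ring structure of $\End(X_f)$, is decisive.
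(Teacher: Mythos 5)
Your proposal follows essentially the same route as the paper's proof: reduce to an indecomposable non-brick $M$, invoke Lemma~\ref{L-2-3} to obtain a brick $X_f$ together with a $\Theta$-decomposition $(i_f,X_f,j_f)$ of an endomorphism $f$ of $M$, show that $\cone(j_f)\notin\mathcal{U}$, pull back the torsion decomposition of $\cone(j_f)$ along $M\twoheadrightarrow\cone(j_f)$ to split $M$ into two strictly shorter members of $\mathcal{U}^\perp\cap\mathcal{T}$, and close by induction on $\Theta$-length.

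The one step I would push back on is how you deduce $i_fj_f=0$. You argue that $\End(M)$ is local, so the non-isomorphism $f$ is nilpotent, and then transport nilpotence to $i_fj_f$ via $(i_fj_f)^{k+1}=i_ff^kj_f$. But in the generality of the paper---Krull--Schmidt is assumed, Hom-finiteness is not---locality of $\End(M)$ alone does not give nilpotence of a non-unit; the Jacobson radical of a local ring need not be nil. Fortunately the nilpotence detour is unnecessary. Since $X_f$ is a brick, $i_fj_f\in\End(X_f)$ is either $0$ or an isomorphism, and the isomorphism case is excluded by the very observation you already made in parentheses: if $i_fj_f$ were invertible then $j_f$ would be a split mono, forcing $M\cong X_f$ to be a brick (using that $M$ is indecomposable and $\cone(j_f)\neq 0$), contrary to assumption. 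This is exactly what the paper is encoding by calling the $\mathbb{E}$-triangle $X_f\rightarrowtail M\twoheadrightarrow M'$ ``non-split''. With that repair your argument that $\cone(j_f)\notin\mathcal{U}$ (via exactness of $\Hom(-,X_f)$, equivalently the factorization $s:M'\to X_f$ with $sg=i_f$ as in the paper) and the subsequent pullback construction agree with the paper's proof.
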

\begin{proof}  It suffices to prove $\mathcal{U}^{\perp}\cap \mathcal{T}\subseteq\Filt_{\Theta}( {\rm brick }(\mathcal{U}^{\perp}\cap \mathcal{T}))$. For any non-zero indecomposable object $M\in\mathcal{U}^{\perp}\cap \mathcal{T}$,  by Lemma \ref{L-2-3}, there exists a non-zero endomorphism $f:M\rightarrow M$ and a $\Theta$-decomposition $(i_{f},X_{f},j_{f})$ with $X_{f}\in{\rm brick }(\mathcal{U}^{\perp}\cap \mathcal{T})$. In addition, we have a non-split stable $\mathbb{E}$-triangle
 $$X_{f}\stackrel{j_{f}}\rightarrowtail M\stackrel{g}\twoheadrightarrow M'\stackrel{\delta}\dashrightarrow.$$
If $\Theta(M')=0$, then $M\cong X_{f}\in{\rm brick }(\mathcal{U}^{\perp}\cap \mathcal{T})$. So let us assume $\Theta(M')>0$. Since $X_{f}$ is a brick and $i_{f}\neq0$, there exists a non-zero morphism $s:M'\rightarrow X_{f}$ such that $sg=i_{f}$. This implies that $M'\notin\mathcal{U}$. Using Lemma \ref{L-2-1} together with Lemma \ref{L-1}, we have the following commutative diagram
\begin{equation*}
\xymatrix{
 X_{f}~\ar@{=}[d] \ar@{>->}[r] & K \vphantom{\big|}  \ar@{>->}[d]^{} \ar@{->>}[r]^{} & U\vphantom{\big|}   \ar@{>->}[d]\ar@{-->}[r]^-{}& \\
  X_{f}~\ar@{>->}[r]^{j_{f}} & M\ar@{->>}[d]^{} \ar@{->>}[r]^{} & M' \ar@{->>}[d] \ar@{-->}[r]^-{}&\\
  &  V\ar@{=}[r]\ar@{-->}[d]^-{} & V\ar@{-->}[d]^-{} \\
   &  & }
\end{equation*}
where $U\in \mathcal{U}$ and $V\in \mathcal{U}^{\perp}$. Since $M\in \mathcal{U}^{\perp}\cap \mathcal{T}$ and $X_{f},U\in \mathcal{T}$, we have $K\in \mathcal{U}^{\perp}\cap\mathcal{T}$ and $V\in \mathcal{T}$. Note that, $\Theta(V)<\Theta(M)$ since $j_{f}\neq0$. If  $\Theta(K)=\Theta(M)$, then $K\cong M$ and thus $M'\cong U\in \mathcal{U}$, this is a contradiction. This implies that $0<\Theta(V)<\Theta(M)$. By induction hypothesis, we get $K,V\in\Filt_{\Theta}( {\rm brick }(\mathcal{U}^{\perp}\cap \mathcal{T}))$ and so is $M$.
\end{proof}

\begin{lemma}\label{L-4-18}  Let $[\mathcal{U},\mathcal{T}]$ be an interval in $\tors_{\Theta}(\mathcal{A})$.

$(1)$  We have $\mathcal{T}=\mathcal{U}\ast (\mathcal{U}^{\perp}\cap \mathcal{T})$.

$(2)$  Let $S\in{\rm brick }(\mathcal{U}^{\perp}\cap \mathcal{T})$ such that $\Theta(S)\leq\Theta(S')$ for any $S'\in{\rm brick }(\mathcal{U}^{\perp}\cap \mathcal{T})$. Then $\mathcal{T}\cap{^{\perp}}S$ is a torsion class.
\end{lemma}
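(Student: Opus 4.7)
The plan for part~(1) is to verify both inclusions directly. For $\mathcal{U}\ast(\mathcal{U}^{\perp}\cap\mathcal{T})\subseteq\mathcal{T}$, any $\mathbb{E}$-triangle $U\to M\to V\dashrightarrow$ with $U\in\mathcal{U}\subseteq\mathcal{T}$ and $V\in\mathcal{U}^{\perp}\cap\mathcal{T}\subseteq\mathcal{T}$ forces $M\in\mathcal{T}\ast\mathcal{T}\subseteq\mathcal{T}$ by Remark~\ref{R-3-5}. For the reverse inclusion, given $M\in\mathcal{T}$, Lemma~\ref{L-2-1} produces a $\Theta$-stable $\mathbb{E}$-triangle $U\rightarrowtail M\twoheadrightarrow V\dashrightarrow$ with $U\in\mathcal{U}$ and $V\in\mathcal{U}^{\perp}$; since $\mathcal{T}$ is closed under $\Fac_{\Theta}$, we have $V\in\mathcal{U}^{\perp}\cap\mathcal{T}$, as required.

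For part~(2) I verify the two closure conditions of Remark~\ref{R-3-5} for $\mathcal{T}\cap{}^{\perp}S$. Extension-closure is essentially automatic: for an $\mathbb{E}$-triangle $X\to Y\to Z\dashrightarrow$ with $X,Z\in\mathcal{T}\cap{}^{\perp}S$, applying $\Hom_{\mathcal{A}}(-,S)$ yields the exact sequence $\Hom(Z,S)\to\Hom(Y,S)\to\Hom(X,S)$ with both outer terms zero, whence $Y\in{}^{\perp}S$; combined with $Y\in\mathcal{T}$ this gives $Y\in\mathcal{T}\cap{}^{\perp}S$. For closure under $\Fac_{\Theta}$, let $M\in\mathcal{T}\cap{}^{\perp}S$ and $f\colon M\twoheadrightarrow N$ a $\Theta$-deflation; certainly $N\in\mathcal{T}$, so I suppose for contradiction that there is a non-zero $g\colon N\to S$ with $\Theta$-decomposition $(i_{g},X_{g},j_{g})$. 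The argument then splits on the size of $\Theta(X_{g})$.

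If $\Theta(X_{g})=\Theta(S)$, the cone of the $\Theta$-inflation $j_{g}$ has $\Theta=0$, so $j_{g}$ is an isomorphism by Lemma~\ref{L2-4}, and hence $g$ is (equivalent to) a $\Theta$-deflation. Then $gf$ is a composition of two $\Theta$-deflations, which by Lemma~\ref{L-1} is itself a $\Theta$-deflation; but $gf=0$ since $M\in{}^{\perp}S$, so Lemma~\ref{L2-4}(2$'$) forces $S\cong 0$, contradicting the brickness of $S$. If instead $\Theta(X_{g})<\Theta(S)$, I claim $X_{g}\in\mathcal{U}^{\perp}\cap\mathcal{T}$. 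The containment $X_{g}\in\mathcal{T}$ is clear from $X_{g}\in\Fac_{\Theta}(N)$; to see $X_{g}\in\mathcal{U}^{\perp}$, take any non-zero $h\colon U\to X_{g}$ with $U\in\mathcal{U}$ and decompose $h=j_{h}i_{h}$, so that $X_{h}\in\Fac_{\Theta}(\mathcal{U})\subseteq\mathcal{U}$ is non-zero and $j_{g}j_{h}\colon X_{h}\rightarrowtail S$ is a $\Theta$-inflation by Lemma~\ref{L-1}, which by Lemma~\ref{L2-4}(2) is non-zero, contradicting $S\in\mathcal{U}^{\perp}$. Hence $X_{g}\in\mathcal{U}^{\perp}\cap\mathcal{T}$ with $0<\Theta(X_{g})<\Theta(S)$, and Lemma~\ref{L-2-4} places some brick $S'\in\mathrm{brick}(\mathcal{U}^{\perp}\cap\mathcal{T})$ inside a $\Theta$-filtration of $X_{g}$, so $\Theta(S')\leq\Theta(X_{g})<\Theta(S)$, contradicting the minimality of $\Theta(S)$. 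The main obstacle will be this second case, where the minimality of $\Theta(S)$ must be channelled through the brick-filtration description of $\mathcal{U}^{\perp}\cap\mathcal{T}$ from Lemma~\ref{L-2-4}, and careful use of Lemma~\ref{L-1} and Lemma~\ref{L2-4} is needed to track the stability of the compositions of $\Theta$-inflations and $\Theta$-deflations that appear in the decompositions.
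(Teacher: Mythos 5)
Your part~(1) coincides with the paper's proof: the inclusion $\mathcal{U}\ast(\mathcal{U}^{\perp}\cap\mathcal{T})\subseteq\mathcal{T}$ is immediate from extension-closure of $\mathcal{T}$, and the reverse inclusion applies Lemma~\ref{L-2-1} to $M\in\mathcal{T}$ and then closes under $\Fac_{\Theta}$.

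For part~(2) you take a genuinely different route. The paper argues by induction on $\Theta(N)$: it first settles the base case $\Theta(N)=1$ via Lemma~\ref{L-4-166}, then for the inductive step splits on whether $\Theta(X_{g})=\Theta(N)$ (which forces $N\cong S$ and hence a contradiction from $f$ being a $\Theta$-deflation killed by $M\in{}^{\perp}S$) or $\Theta(X_{g})<\Theta(N)$, where the induction hypothesis applied to $i_{g}f\colon M\twoheadrightarrow X_{g}$ gives $X_{g}\in\mathcal{T}\cap{}^{\perp}S$ and hence $X_{g}\cong 0$. Your argument is non-inductive and splits instead on $\Theta(X_{g})$ against $\Theta(S)$. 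When $\Theta(X_{g})=\Theta(S)$ you turn $g$ into a $\Theta$-deflation and conclude from $gf=0$ being a $\Theta$-deflation that $S\cong 0$; when $\Theta(X_{g})<\Theta(S)$ you show $X_{g}\in\mathcal{U}^{\perp}\cap\mathcal{T}$ (in effect, $X_{g}\in\Sub_{\Theta}(S)\subseteq\mathcal{U}^{\perp}$ because $\mathcal{U}^{\perp}$ is a torsion-free class) and then feed $X_{g}$ into Lemma~\ref{L-2-3} to produce a brick in $\mathrm{brick}(\mathcal{U}^{\perp}\cap\mathcal{T})$ of strictly smaller $\Theta$-value than $S$, contradicting minimality. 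Both routes use the same ingredients (Lemma~\ref{L2-4}, Lemma~\ref{L-2-3}) but your case split avoids the induction and the base-case lemma. One small citation gloss: invoking Lemma~\ref{L-1} for ``a composition of $\Theta$-deflations is a $\Theta$-deflation'' is not literally what that lemma states; the cleanest source in the paper is Proposition~\ref{11-11}, which exhibits $(\mathcal{A},\mathbb{E}_{\Theta},\mathfrak{s}_{\Theta})$ as an extriangulated category, so that $\Theta$-deflations compose by the (ET4)-type axioms of that structure. With that reference adjusted, the argument is sound.
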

\begin{proof} $(1)$ It is enough to show that $\mathcal{T}\subseteq\mathcal{U}\ast (\mathcal{U}^{\perp}\cap \mathcal{T})$. For any $M\in\mathcal{T}$, by Lemma \ref{L-2-1}, there exists a stable $\mathbb{E}$-triangle $U\stackrel{}\rightarrowtail M\twoheadrightarrow U'\stackrel{}\dashrightarrow$ such that $U\in \mathcal{U}$ and $U'\in\mathcal{U}^{\perp}$. Since $\mathcal{T}$ is a torsion class, we get $U'\in\mathcal{U}^{\perp}\cap \mathcal{T}$.

$(2)$ It is obvious that $\mathcal{T}\cap{^{\perp}}S$ is closed under extensions. We take a $\Theta$-deflation $f:M\twoheadrightarrow N$ such that $M\in\mathcal{T}\cap{^{\perp}}S$. It suffices to prove $N\in\mathcal{T}\cap{^{\perp}}S$. Note that $N\in\mathcal{T}$ since $\mathcal{T}$ is a torsion class. If $\Theta(N)=1$, then either $N\in \mathcal{U}$ or $N\in \mathcal{U}^{\perp}$ by Lemma \ref{L-2-1}.  For the former, we have $N\in \mathcal{U}\subseteq {^{\perp}}S$. For the latter, we  have $N\in{\rm brick }(\mathcal{U}^{\perp}\cap \mathcal{T})$. Since $\Theta(S)\leq\Theta(N)=1$, we have $S\in\Theta_{1}$. By Lemma \ref{L-4-18}, $^{\perp}S$ is a torsion class and thus $N\in{^{\perp}}S$. For the general case, let $g:N\rightarrow S$ be a morphism in $\mathcal{A}$. We take a $\Theta$-decomposition $(i_{g},X_{g},j_{g})$ for $g$. If $\Theta(N)=\Theta(X_{g})$, then $g\cong j_{g}$ is a $\Theta$-inflation. Since $S\in \mathcal{U}^{\perp}$, we have $N\in\mathcal{U}^{\perp}\cap \mathcal{T}$. By Lemma \ref{L-2-3}, there exists a brick $S'\in{\rm brick }(\mathcal{U}^{\perp}\cap \mathcal{T})$ such that $\Theta(S')\leq\Theta(N)$. Since $\Theta(S')\leq\Theta(N)\leq\Theta(S)\leq\Theta(S')$, we have $N\cong S$ and hence $f=0$, this is a contradiction. This implies $\Theta(X_{g})<\Theta(N)$. By induction hypothesis, we have $X_{g}\in\mathcal{T}\cap{^{\perp}}S$. In this case, we get $j_{g}=0$ and then $g=0$. We conclude that $N\in\mathcal{T}\cap{^{\perp}}S$.
\end{proof}

\begin{lemma}\label{L-4-166}  If $M\in\Theta_{1}$, then $^{\perp}M$ is a torsion class.
\end{lemma}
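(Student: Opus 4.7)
The plan is to verify directly the two defining conditions of a torsion class from Definition~\ref{Torsion}: that $^{\perp}M$ is closed under $\Theta$-deflation quotients (i.e.\ $\Fac_\Theta({}^{\perp}M) \subseteq {}^{\perp}M$) and closed under $\Theta$-stable extensions (i.e.\ ${}^{\perp}M *_\Theta {}^{\perp}M \subseteq {}^{\perp}M$). Both verifications reduce to applying the functor $\Hom_\mathcal{A}(-,M)$ to the relevant $\mathbb{E}$-triangle and reading off the long exact sequence of abelian groups furnished by the extriangulated structure.

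For the extension closure, I would suppose $X \rightarrowtail Z \twoheadrightarrow Y \dashrightarrow$ is a $\Theta$-stable $\mathbb{E}$-triangle with $X,Y \in {}^{\perp}M$. Applying $\Hom_\mathcal{A}(-,M)$ yields the exact sequence
\[
0 \longrightarrow \Hom_\mathcal{A}(Y,M) \longrightarrow \Hom_\mathcal{A}(Z,M) \longrightarrow \Hom_\mathcal{A}(X,M),
\]
whose flanking terms vanish by hypothesis, forcing $\Hom_\mathcal{A}(Z,M)=0$, and hence $Z \in {}^{\perp}M$.

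For the quotient closure, I would take $N \in {}^{\perp}M$ and a $\Theta$-deflation $f:N \twoheadrightarrow N'$, completing it to a stable $\mathbb{E}$-triangle $K \rightarrowtail N \twoheadrightarrow N' \dashrightarrow$. Any $g:N' \to M$ satisfies $gf \in \Hom_\mathcal{A}(N,M)=0$; since the precomposition map $\Hom_\mathcal{A}(N',M) \to \Hom_\mathcal{A}(N,M)$ induced by $f$ is injective by the initial segment of the same long exact sequence, I conclude that $g = 0$, whence $N' \in {}^{\perp}M$.

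The whole argument is essentially formal, so there is no real obstacle: the conclusion follows from the left-exactness of $\Hom_\mathcal{A}(-,M)$ built into the axioms of an extriangulated category. In fact the hypothesis $M \in \Theta_1$ is not actually used in either step, and the statement holds for an arbitrary $M \in \mathcal{A}$; however, the specialized form is the one required for its use in the proof of Lemma~\ref{L-4-18}(2), where the relevant brick $S$ is produced with $\Theta(S)=1$.
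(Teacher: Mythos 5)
The extension-closure step of your argument is correct and formal, but the deflation-quotient step contains a genuine error. The long exact sequence obtained by applying $\Hom_{\mathcal{A}}(-,M)$ to an $\mathbb{E}$-triangle $K\rightarrowtail N\twoheadrightarrow N'\dashrightarrow$ is
$$\Hom_{\mathcal{A}}(N',M)\longrightarrow\Hom_{\mathcal{A}}(N,M)\longrightarrow\Hom_{\mathcal{A}}(K,M)\longrightarrow\mathbb{E}(N',M)\longrightarrow\cdots,$$
and it does \emph{not} begin with $0$: the map $\Hom_{\mathcal{A}}(N',M)\to\Hom_{\mathcal{A}}(N,M)$ induced by the deflation is in general not injective. (In a triangulated category this sequence continues to the left with $\Hom(K[1],M)$, and its image is precisely the kernel.) Consequently, from $gf=0$ you cannot conclude $g=0$ without further input, and the rest of the argument collapses.

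This is exactly where the hypothesis $M\in\Theta_{1}$ is needed, and your closing remark that it is unused --- and that ${}^{\perp}M$ is a torsion class for arbitrary $M$ --- is false. Indeed, the paper's Remark immediately after this lemma exhibits a counterexample: in the length category $(D^{b}(\Lambda),l_{\mathcal{X}})$ of Example~\ref{E-3-26}, one has $P_{1}[-1]\in{}^{\perp}P_{2}$ and $I_{2}[-1]\in\Fac_{l_{\mathcal{X}}}(P_{1}[-1])$ but $I_{2}[-1]\notin{}^{\perp}P_{2}$, so ${}^{\perp}P_{2}$ is not a torsion class. The paper's actual proof of the deflation-closure proceeds differently: if $Q\notin{}^{\perp}M$, pick a nonzero $g\colon Q\to M$; since $\Theta(M)=1$, Lemma~\ref{L-3-77}(1) forces $g$ to be a $\Theta$-deflation, so $gf\colon P\twoheadrightarrow M$ is again a $\Theta$-deflation which vanishes because $P\in{}^{\perp}M$; by Lemma~\ref{L2-4}(2') a vanishing $\Theta$-deflation forces its target to be $0$, contradicting $\Theta(M)=1$. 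That reliance on Lemma~\ref{L-3-77}(1), which requires $\Theta(M)=1$ so that every nonzero map into $M$ is a $\Theta$-deflation, is the essential content your proof is missing.
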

\begin{proof}  Clearly, it suffices to show that $\Fac_{\Theta}(^{\perp}M)\subseteq{^{\perp}M}$. To see this, we take a $\Theta$-deflation $f:P\twoheadrightarrow Q$ with $P\in{^\perp M}$. If $Q\notin{^{\perp}M}$, then there exists a  $\Theta$-deflation $g:Q\twoheadrightarrow M$ by Lemma \ref{L-3-77}(1). Since $P\in{^\perp M}$, we get $fg=0$ and thus $M\cong0$, this is a contradiction. This implies  $Q\in{^{\perp}M}$ and hence  $^{\perp}M$ is a torsion class.
\end{proof}

\begin{remark}For any $S\in \mathcal{A}$, $^{\perp}S$ is not a torsion class in general.  For instance, consider the length category $(D^{b}(\Lambda),l_{\mathcal{X}})$ as in  Example \ref{E-3-26}. We get easily $P_{1}[-1]\in {^{\perp}P_{2}}$ and $I_{2}[-1]\in\Fac_{l_{\mathcal{X}}}(P_{1}[-1])$. However, $I_{2}[-1]\notin {^{\perp}P_{2}}$ and hence $ {^{\perp}P_{2}}\notin\tors_{l_{\mathcal{X}}}(D^{b}(\Lambda))$.
\end{remark}

For a set $\mathcal{X}$, we denote by $\#\mathcal{X}$ its cardinality. Now we can state our first main result of this subsection.

\begin{theorem}\label{main4} Let $(\mathcal{A},\Theta)$ be a length category. Suppose that $[\mathcal{U},\mathcal{T}]\subseteq\tors_{\Theta}(\mathcal{A})$.

$(1)$ We have $\#[\mathcal{U},\mathcal{T}]=1$ if and only if {\rm brick}$(\mathcal{U}^{\perp}\cap \mathcal{T})=\emptyset$.

$(2)$ If $\#${\rm brick}$(\mathcal{U}^{\perp}\cap \mathcal{T})=1$, then $\#[\mathcal{U},\mathcal{T}]=2$.

$(3)$ If $\#[\mathcal{U},\mathcal{T}]=2$, then there exists a brick $S\in{\rm brick }(\mathcal{U}^{\perp}\cap \mathcal{T})$ satisfying the following conditions:
\begin{itemize}
   \item [(i)]$\Theta(S)\leq\Theta(S')$ for any $S'\in{\rm brick }(\mathcal{U}^{\perp}\cap \mathcal{T})$.
   \item [(ii)] $\mathcal{T}=\mathrm{T}_{\Theta}(\mathcal{U}\cup  S)$ and $\mathcal{U}=\mathcal{T}\cap{^{\perp}}S$.
   \item [(iii)] $S$ satisfies $$S\in\bigcap_{S'\in{\rm brick }(\mathcal{U}^{\perp}\cap \mathcal{T})}(\Sub_{\Theta}(S')\cap\Fac_{\Theta}(S')).$$
  \end{itemize}
Moreover, the brick $S$ is unique up to isomorphism.
\end{theorem}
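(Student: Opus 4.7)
The plan is to establish the three assertions in sequence, with part (iii) being the main obstacle, and then derive uniqueness.

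For (1), the forward implication is immediate: if $\mathcal{U}=\mathcal{T}$, any $M\in\mathcal{U}^{\perp}\cap\mathcal{U}$ satisfies $\End(M)=0$, forcing $M=0$, so no bricks exist. Conversely, if ${\rm brick}(\mathcal{U}^{\perp}\cap\mathcal{T})=\emptyset$, Lemma \ref{L-2-4} yields $\mathcal{U}^{\perp}\cap\mathcal{T}=\Filt_{\Theta}(\emptyset)=\{0\}$, whence Lemma \ref{L-4-18}(1) gives $\mathcal{T}=\mathcal{U}\ast\{0\}=\mathcal{U}$. For (2), let $S$ be the unique brick and suppose $\mathcal{U}\subsetneq\mathcal{V}\subseteq\mathcal{T}$. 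Applying (1) to $[\mathcal{U},\mathcal{V}]$ together with Lemma \ref{L-2-3}, some brick lies in $\mathcal{U}^{\perp}\cap\mathcal{V}\subseteq\mathcal{U}^{\perp}\cap\mathcal{T}=\{S\}$, so $S\in\mathcal{V}$. Since $\mathcal{U}^{\perp}\cap\mathcal{T}=\Filt_{\Theta}(\{S\})$ by Lemma \ref{L-2-4}, the canonical decomposition of Lemma \ref{L-4-18}(1) places every $M\in\mathcal{T}$ in $\mathrm{T}_{\Theta}(\mathcal{U}\cup\{S\})\subseteq\mathcal{V}$, giving $\mathcal{V}=\mathcal{T}$.

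For (3)(i)--(ii), I choose $S\in{\rm brick}(\mathcal{U}^{\perp}\cap\mathcal{T})$ with $\Theta(S)$ minimal, which exists because $\Theta$ is $\mathbb{N}$-valued and the set is non-empty by (1) and Lemma \ref{L-2-3}. Condition (ii) then follows from the two-element interval assumption: $\mathrm{T}_{\Theta}(\mathcal{U}\cup\{S\})$ lies in $[\mathcal{U},\mathcal{T}]$ and strictly contains $\mathcal{U}$ (as $S\notin\mathcal{U}$), so equals $\mathcal{T}$; and by Lemma \ref{L-4-18}(2), $\mathcal{T}\cap{^{\perp}}S$ is a torsion class in $[\mathcal{U},\mathcal{T}]$ distinct from $\mathcal{T}$ (since $S\notin{^{\perp}}S$), so equals $\mathcal{U}$.

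The main obstacle is (iii), which requires producing for every $S'\in{\rm brick}(\mathcal{U}^{\perp}\cap\mathcal{T})$ both a $\Theta$-deflation $S'\twoheadrightarrow S$ and a $\Theta$-inflation $S\rightarrowtail S'$. For the $\Fac$ direction, (ii) gives $\Hom(S',S)\neq 0$ (as $S'\in\mathcal{U}^{\perp}\cap\mathcal{T}\setminus\mathcal{U}$); taking a nonzero $f\colon S'\to S$ with $\Theta$-decomposition $S'\twoheadrightarrow X_f\rightarrowtail S$, the object $X_f$ lies in $\mathcal{U}^{\perp}\cap\mathcal{T}$ as a $\Theta$-quotient of $S'\in\mathcal{T}$ and a $\Theta$-subobject of $S\in\mathcal{U}^{\perp}$, and Lemma \ref{L-2-3} applied to $X_f$ produces a brick $Y$ satisfying $\Theta(S)\le\Theta(Y)\le\Theta(X_f)\le\Theta(S)$; all values coincide, forcing the $\Theta$-inflation $X_f\rightarrowtail S$ to be an isomorphism by Lemma \ref{L2-4}, so that $f$ itself is a $\Theta$-deflation. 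The $\Sub$ direction is the harder half, since there is no direct analogue of (ii) for $\Hom(S,-)$; my strategy is to show that $\mathcal{U}^{\perp}\cap S^{\perp}$ is a torsion-free class. Extension-closure is routine, and $\Sub_{\Theta}$-closure uses the long exact sequence $0\to\Hom(S,M)\to\Hom(S,N)\to\cdots$ associated to a stable $\mathbb{E}$-triangle $M\rightarrowtail N\twoheadrightarrow \cone$. This torsion-free class sits in $[\mathcal{T}^{\perp},\mathcal{U}^{\perp}]$, which has exactly two elements by Proposition \ref{L-2-2}, and it is a proper subcategory of $\mathcal{U}^{\perp}$ since $S\notin S^{\perp}$; hence it equals $\mathcal{T}^{\perp}$. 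Consequently, no $S'\in\mathcal{U}^{\perp}\cap\mathcal{T}$ lies in $S^{\perp}$, so $\Hom(S,S')\neq 0$, and the symmetric $\Theta$-decomposition argument yields $S\in\Sub_{\Theta}(S')$. Uniqueness of $S$ follows immediately from (iii) and (i): any $\tilde S$ satisfying all the conditions gives a $\Theta$-inflation $S\rightarrowtail\tilde S$ with $\Theta(S)=\Theta(\tilde S)$, which must be an isomorphism by Lemma \ref{L2-4}.
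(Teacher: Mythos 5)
Your overall architecture matches the paper's for parts (1), (2), (3)(i)--(ii), the $\Fac$ half of (iii), and uniqueness, and you work out explicitly what the paper only gestures at with ``by symmetry'' for the $\Sub$ half. However, there is a genuine gap in that half, and it occurs precisely at the step you label ``routine.''

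You claim that $\Sub_{\Theta}$-closure of $\mathcal{U}^{\perp}\cap S^{\perp}$ ``uses the long exact sequence $0\to\Hom(S,M)\to\Hom(S,N)\to\cdots$ associated to a stable $\mathbb{E}$-triangle $M\rightarrowtail N\twoheadrightarrow\cone$.'' No such sequence exists in an extriangulated category: $\Hom(S,-)$ is not left exact on $\mathbb{E}$-triangles, so $\Hom(S,M)\to\Hom(S,N)$ is not injective in general. (In the triangulated case, its kernel is the image of $\Hom(S,\cone[-1])$.) In particular, for an arbitrary object $S$ the subcategory $S^{\perp}$ is \emph{not} closed under $\Theta$-subobjects, and correspondingly $^{\perp}S$ is not a torsion class for a general brick $S$ — the paper records this explicitly as a remark after Lemma~\ref{L-4-166}, with a concrete counterexample. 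So the closure you need cannot be ``routine''; it must use the minimality of $\Theta(S)$, which your long-exact-sequence argument does not.

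The step can be repaired without changing your strategy: it is exactly the dual of Lemma~\ref{L-4-18}(2). Concretely, suppose $M\rightarrowtail N$ is a $\Theta$-inflation with $N\in\mathcal{U}^{\perp}\cap S^{\perp}$ and $g\colon S\to M$ is nonzero. Its $\Theta$-decomposition $S\twoheadrightarrow X_g\rightarrowtail M$ puts $X_g$ in $\mathcal{U}^{\perp}\cap\mathcal{T}$ (as a $\Theta$-quotient of $S\in\mathcal{T}$ and a $\Theta$-subobject of $M\in\mathcal{U}^{\perp}$), and Lemma~\ref{L-2-3} together with the minimality of $\Theta(S)$ forces $\Theta(X_g)=\Theta(S)$, so $X_g\cong S$; composing the resulting $\Theta$-inflation $S\rightarrowtail M$ with $M\rightarrowtail N$ gives a nonzero map $S\to N$ by Lemmas~\ref{L-1} and~\ref{L2-4}, contradicting $N\in S^{\perp}$. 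Once this is in place, the remainder of your argument — identifying $\mathcal{U}^{\perp}\cap S^{\perp}$ with $\mathcal{T}^{\perp}$ inside the two-element interval $[\mathcal{T}^{\perp},\mathcal{U}^{\perp}]$ of torsion-free classes — is correct and is a clean rendering of what the paper leaves implicit.
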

\begin{proof}
(1) If $\#[\mathcal{U},\mathcal{T}]=1$, then {\rm brick}$(\mathcal{U}^{\perp}\cap \mathcal{T})=\emptyset$ by Lemma \ref{L-2-4}. Conversely, suppose that {\rm brick }$(\mathcal{U}^{\perp}\cap \mathcal{T})=\emptyset$. If $\mathcal{U}\subsetneq\mathcal{T}$, then there exists a non-zero object $X\in\mathcal{U}^{\perp}\cap \mathcal{T}$. Then by Lemma \ref{L-2-3} there is a brick $X'\in{\rm brick }(\mathcal{U}^{\perp}\cap \mathcal{T})$, this is a contradiction.

$(2)$ Suppose that $\#${\rm brick }$(\mathcal{U}^{\perp}\cap \mathcal{T})=1$ and $\mathcal{U}\subseteq \mathcal{V}\subseteq \mathcal{T}$. Clearly, $\#{\rm brick }(\mathcal{U}^{\perp}\cap \mathcal{V})\leq 1$. If $\#{\rm brick }(\mathcal{U}^{\perp}\cap \mathcal{V})=0$, then $\mathcal{U}=\mathcal{V}$ by (1). If $\#{\rm brick }(\mathcal{U}^{\perp}\cap \mathcal{V})=1$, then $\mathcal{U}^{\perp}\cap \mathcal{V}=\mathcal{U}^{\perp}\cap \mathcal{T}$  by Lemma \ref{L-2-4}. By using Lemma \ref{L-4-18}(1), we have $\mathcal{V}=\mathcal{U}\ast (\mathcal{U}^{\perp}\cap \mathcal{V})=\mathcal{T}$. This shows that $\#[\mathcal{U},\mathcal{T}]=2$.

$(3)$ For any $S\in{\rm brick }(\mathcal{U}^{\perp}\cap \mathcal{T})$, we set $\mathcal{T}_{S}:=\mathrm{T}_{\Theta}(\mathcal{U}\cup S)$ and $\mathcal{U}_{S}=\mathcal{T}\cap\mathrm{T}_{\Theta}(^{\perp}S)$. Then one can check that $\mathcal{U}\subsetneq\mathcal{T}_{S}\subseteq \mathcal{T}$ and $\mathcal{U}\subseteq \mathcal{U}_{S}\subseteq\mathcal{T}_{S}$.

We can take $S\in{\rm brick }(\mathcal{U}^{\perp}\cap \mathcal{T})$ such that $\Theta(S)\leq\Theta(S')$ for any  $S'\in{\rm brick }(\mathcal{U}^{\perp}\cap \mathcal{T})$. Since $\#[\mathcal{U},\mathcal{T}]=2$, we have $\mathcal{T}=\mathcal{T}_{\mathcal{S}}$. Thus, we get $\mathcal{U}=\mathcal{T}\cap{^{\perp}}S$ by Lemma \ref{L-4-18}. So we have shown (i) and (ii). For (iii), we take $S'\in{\rm brick }(\mathcal{U}^{\perp}\cap \mathcal{T})$. Observe that $\mathcal{T}=\mathcal{T}_{\mathcal{S'}}$. If $S'\in{^{\perp}}S$, then $\mathcal{T}=\mathcal{T}_{\mathcal{S}'}=\mathrm{T}_{\Theta}(\mathcal{U}\cup S')\subseteq\mathcal{T}\cap{^{\perp}}S=\mathcal{U}$, this is a contradiction. Thus, there exists a non-zero morphism $f:S'\rightarrow S$ in $\mathcal{A}$. We take a $\Theta$-decomposition $(i_{f},X_{f},j_{f})$ of $f$. Clearly, $X_{f}\in\mathcal{U}^{\perp}\cap \mathcal{T}$. By Lemma \ref{L-2-3}, there exists a brick $S''\in{\rm brick }(\mathcal{U}^{\perp}\cap \mathcal{T})$ such that $\Theta(S'')\leq\Theta(X_{f})\leq\Theta(S)$.  The preceding argument implies $\Theta(X_{f})=\Theta(S)$ and hence $j_{f}$ is an isomorphism. Then $f\cong i_{f}$ and thus $S\in\Fac_{\Theta}(S')$. In particular, if $\Theta(S')=\Theta(S)$, then $f$ is an isomorphism. This means that $S$ is unique up to isomorphism. By symmetry, there exists a non-zero morphism $g:S\rightarrow S'$ with a $\Theta$-decomposition  $(i_{g},X_{g},j_{g})$. By using Lemma \ref{L-2-3}, we have $S\cong X_{g}\in\Sub_{\Theta}(S')$.
\end{proof}

Theorem \ref{main4}(3) allows us to introduce brick labelling in the Hasse quiver of $\tors_{\Theta}(\mathcal{A})$, which first appeared in \cite{De}.

\begin{definition}\label{brick} Let $q:\mathcal{T}\rightarrow \mathcal{U}$ be an arrow in {\rm Hasse}($\tors(\mathcal{A})$). The {\em brick label} of $q$ is the unique brick $S\in{\rm brick }(\mathcal{U}^{\perp}\cap \mathcal{T})$ from Theorem \ref{main4}(3). In this case, we write $\mathcal{T}\stackrel{S}\rightarrow \mathcal{U}$.
\end{definition}

Dually, for an arrow  $p:\mathcal{V}\rightarrow \mathcal{F}$  in {\rm Hasse}($\torf_{\Theta}(\mathcal{A})$),  the {brick label} of $p$ is the  unique brick  $S\in{\rm brick }(^{\perp}\mathcal{F}\cap \mathcal{V})$ (up to isomorphism) such that $\Theta(S)\leq\Theta(S')$ for any $S'\in{\rm brick }(^{\perp}\mathcal{F}\cap \mathcal{V})$. In this case, we write $\mathcal{V}\stackrel{S}\rightarrow \mathcal{F}$. We have the following relationship between $\tors_{\Theta}(\mathcal{A})$ and $\torf_{\Theta}(\mathcal{A})$ with respect to brick labelling.

\begin{proposition}\label{p-3-10} The isomorphism
 \begin{align*}
 \tors_{\Theta} (\mathcal{A})&\stackrel{}{\longrightarrow}\torf_{\Theta} (\mathcal{A})^{{\rm op}}\\
 \mathcal{T}&\longmapsto \mathcal{T}^{\perp}
 \end{align*}
 of complete lattices preserves brick labelling. That is, $\mathcal{T}\stackrel{S}\rightarrow \mathcal{U}$ is an arrow in {\rm Hasse}$(\tors_{\Theta}(\mathcal{A}))$ if and only if $\mathcal{U}^{\perp}\stackrel{S}\rightarrow \mathcal{T}^{\perp}$ is an arrow in {\rm Hasse}$(\torf_{\Theta}(\mathcal{A}))$.
\end{proposition}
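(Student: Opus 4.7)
The plan is to observe that the proposition follows essentially by unwinding the dual definition of brick labels, once one identifies the two sets in which the labels live. First I would invoke Theorem \ref{main2}(3): the map $(-)^{\perp}$ is an order-reversing isomorphism of complete lattices, so it sends the interval $[\mathcal{U},\mathcal{T}] \subseteq \tors_{\Theta}(\mathcal{A})$ bijectively onto the interval $[\mathcal{T}^{\perp},\mathcal{U}^{\perp}] \subseteq \torf_{\Theta}(\mathcal{A})$, and in particular turns covering relations into covering relations. This already yields the equivalence ``$\mathcal{T} \to \mathcal{U}$ is an arrow in $\mathrm{Hasse}(\tors_{\Theta}(\mathcal{A}))$ iff $\mathcal{U}^{\perp} \to \mathcal{T}^{\perp}$ is an arrow in $\mathrm{Hasse}(\torf_{\Theta}(\mathcal{A}))$'' with no further work. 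One must only double-check the direction of the order reversal so that, in the dual definition of the brick label, we take $\mathcal{V} = \mathcal{U}^{\perp}$ and $\mathcal{F} = \mathcal{T}^{\perp}$.

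The key observation is then that, by Proposition \ref{L-2-2}, $(-)^{\perp}$ and $^{\perp}(-)$ are mutually inverse bijections, so $^{\perp}(\mathcal{T}^{\perp}) = \mathcal{T}$. Consequently the two candidate subcategories for hosting the brick labels literally coincide:
$$^{\perp}(\mathcal{T}^{\perp}) \cap \mathcal{U}^{\perp} \;=\; \mathcal{T} \cap \mathcal{U}^{\perp} \;=\; \mathcal{U}^{\perp} \cap \mathcal{T},$$
and hence ${\rm brick}(^{\perp}\mathcal{T}^{\perp} \cap \mathcal{U}^{\perp}) = {\rm brick}(\mathcal{U}^{\perp} \cap \mathcal{T})$. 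By Definition \ref{brick} the label of $\mathcal{T} \to \mathcal{U}$ is the (essentially unique) $\Theta$-minimal brick in ${\rm brick}(\mathcal{U}^{\perp} \cap \mathcal{T})$ produced by Theorem \ref{main4}(3), while by the dual convention stated right after Definition \ref{brick} the label of $\mathcal{U}^{\perp} \to \mathcal{T}^{\perp}$ is the $\Theta$-minimal brick in ${\rm brick}(^{\perp}\mathcal{T}^{\perp} \cap \mathcal{U}^{\perp})$. The two characterizations are literally the same, so the labels coincide up to isomorphism.

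There is no genuine obstacle here: the proof is a purely formal consequence of Theorem \ref{main2}(3), the bijective correspondence in Proposition \ref{L-2-2}, and the symmetric shape of the two brick-label definitions. The only verification requiring care is the identity $^{\perp}(\mathcal{T}^{\perp}) = \mathcal{T}$, which is already packaged into Proposition \ref{L-2-2}. Once that is in hand, the forward and backward implications of the proposition both follow from the single displayed equality of brick sets above.
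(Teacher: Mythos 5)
Your proof is correct and follows essentially the same route as the paper: invoke Theorem \ref{main2}(3) to transport the covering relation to $\torf_\Theta(\mathcal{A})$, then observe that $^{\perp}(\mathcal{T}^{\perp})=\mathcal{T}$ (Proposition \ref{L-2-2}) makes the two brick sets ${\rm brick}(\mathcal{U}^{\perp}\cap\mathcal{T})$ and ${\rm brick}({^{\perp}}(\mathcal{T}^{\perp})\cap\mathcal{U}^{\perp})$ literally coincide, so the $\Theta$-minimality characterizations pick out the same brick. Your version is a bit more explicit than the paper's about the double-perpendicular identity and the direction of the order reversal, but the argument is the same.
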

\begin{proof} The isomorphism follows immediately from Theorem \ref{main2}(3). In particular,  $p:\mathcal{T}\stackrel{}\rightarrow \mathcal{U}$ is an arrow in {\rm Hasse}$(\tors_{\Theta}(\mathcal{A}))$ if and only if  $q:\mathcal{U}^{\perp}\stackrel{}\rightarrow \mathcal{T}^{\perp}$ is an arrow in {\rm Hasse}$(\torf_{\Theta}(\mathcal{A}))$. Note that  $\mathcal{U}^{\perp}\cap \mathcal{T}=\mathcal{U}^{\perp}\cap {^{\perp}}(\mathcal{T}^{\perp})$. Hence, the definition  of brick label yields $S_{p}=S_{q}$.
\end{proof}

The labelled Hasse quiver of $\tors_{\Theta} (\mathcal{A})$ has the following global structure:

\begin{lemma}\label{L-3-18} Let $X$ be an object in $\Theta_{1}$.

$(1)$ $\mathcal{A}\stackrel{X}\rightarrow{^{\perp}X}$ is an  arrow  in {\rm Hasse}$(\tors_{\Theta}(\mathcal{A}))$.

$(1)'$ $\mathcal{A}\stackrel{X}\rightarrow X{^{\perp}}$ is an  arrow  in {\rm Hasse}$(\torf_{\Theta}(\mathcal{A}))$.

$(2)$ $\add(X)\stackrel{X}\rightarrow 0$ is an  arrow  in {\rm Hasse}$(\tors_{\Theta}(\mathcal{A}))$.

$(2')$ $\add(X)\stackrel{X}\rightarrow 0$ is an  arrow  in {\rm Hasse}$(\torf_{\Theta}(\mathcal{A}))$.
\end{lemma}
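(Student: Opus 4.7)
The plan rests on two facts about $X\in\Theta_{1}$ drawn from Lemma~\ref{L-3-77}(1): $X$ is a brick (since $\Theta_{1}$ is a semibrick), and every non-zero morphism $Y\to X$ (resp.\ $X\to Y$) in $\mathcal{A}$ is a $\Theta$-deflation (resp.\ $\Theta$-inflation). With this in hand, each cover relation will be verified via Theorem~\ref{main4}(2), and the brick label pinned down as $X$ by Theorem~\ref{main4}(3).

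For (1), Lemma~\ref{L-4-166} gives ${^{\perp}X}\in\tors_{\Theta}(\mathcal{A})$, and $X\notin{^{\perp}X}$ makes the inclusion strict. The key step is to show every $F\in({^{\perp}X})^{\perp}$ lies in $\Filt_{\Theta}(X)$. I would proceed by induction on $\Theta(F)$: if $F\neq 0$, then $F\notin{^{\perp}X}$ (otherwise $F\in{^{\perp}X}\cap({^{\perp}X})^{\perp}$ would force $\mathrm{id}_{F}=0$), yielding a non-zero morphism $F\to X$ which by the observation above is a $\Theta$-deflation; the resulting $\Theta$-stable cocone $K\rightarrowtail F\twoheadrightarrow X$ satisfies $K\in({^{\perp}X})^{\perp}$ since torsion-free classes are closed under $\Theta$-subobjects by Proposition~\ref{L-2-2}, and the inductive hypothesis gives $K\in\Filt_{\Theta}(X)$, hence $F\in\Filt_{\Theta}(X)$. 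Combining this with the torsion pair $({^{\perp}X},({^{\perp}X})^{\perp})$ from Lemma~\ref{L-2-1}, every $M\in\mathcal{A}$ decomposes as a $\Theta$-stable extension of some $F\in\Filt_{\Theta}(X)$ by some $T\in{^{\perp}X}$. If $\mathcal{V}\in\tors_{\Theta}(\mathcal{A})$ properly contains ${^{\perp}X}$, any $Y\in\mathcal{V}\setminus{^{\perp}X}$ admits a non-zero $\Theta$-deflation onto $X$, forcing $X\in\mathcal{V}$; closure of $\mathcal{V}$ under $\Theta$-stable extensions then gives $\mathcal{V}=\mathcal{A}$. Hence $\#[{^{\perp}X},\mathcal{A}]=2$, and by Theorem~\ref{main4}(3) the brick label is the unique brick of minimum $\Theta$ in $({^{\perp}X})^{\perp}$. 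Since $X\in({^{\perp}X})^{\perp}$ is a brick with $\Theta(X)=1$, and $(\Filt_{\Theta}(X),l_{X})$ is length wide with $\cim=\{X\}$ by Remark~\ref{R-1}, Lemma~\ref{L-3-17}(2) and Theorem~\ref{main-s}, the label is $X$.

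Parts~(1'), (2) and (2') follow in parallel. Part~(1') is dual to (1) via Proposition~\ref{p-3-10} once one verifies the dual of Lemma~\ref{L-4-166}, namely $X^{\perp}\in\torf_{\Theta}(\mathcal{A})$, argued symmetrically using that the composition of $\Theta$-inflations is a $\Theta$-inflation by Lemma~\ref{L-1}. For (2), any non-zero $M\in\add(X)$ admits a non-zero $\Theta$-deflation $M\twoheadrightarrow X$ (a projection onto a summand), so any torsion class $0\subsetneq\mathcal{V}\subseteq\add(X)$ contains $X$ and hence $\mathrm{T}_{\Theta}(X)$, forcing $\mathcal{V}=\add(X)$; the brick label is again $X$ by the same analysis of simples in $\Filt_{\Theta}(X)$. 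Part~(2') is dual. The main technical obstacle I anticipate is the identification $({^{\perp}X})^{\perp}\subseteq\Filt_{\Theta}(X)$ carried out by the inductive argument above; once this is in hand, the remaining steps are routine applications of Theorem~\ref{main4}, Proposition~\ref{p-3-10} and the closure properties of (co)torsion classes already established.
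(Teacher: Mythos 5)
For (1), your route is the same as the paper's at the level of structure --- Lemma \ref{L-4-166} gives ${^{\perp}X}\in\tors_{\Theta}(\mathcal{A})$, and then a cover/brick argument via Theorem \ref{main4} --- but you supply the substance the paper omits. The paper simply asserts that ${\rm brick}\bigl(({^{\perp}X})^{\perp}\cap\mathcal{A}\bigr)=\{X\}$ and cites Theorem \ref{main4}(2), whereas you prove the inclusion $({^{\perp}X})^{\perp}\subseteq\Filt_{\Theta}(X)$ by induction and then verify the cover relation directly. That induction, together with the remark that the only brick in $\Filt_{\Theta}(X)$ is $X$ (any brick there admits a nonzero inflation from $X$ and a nonzero deflation to $X$, whose composite is then an isomorphism), is exactly what is needed to justify the paper's one-line claim, so this part of your proposal is a real improvement; the final identification of the label could be read off even more quickly from Theorem \ref{main4}(3)(i), since $\Theta(X)=1$ is minimal.

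Two points need repair. First, the justification of (1') misuses Proposition \ref{p-3-10}: applied to the arrow $\mathcal{A}\stackrel{X}\rightarrow{^{\perp}X}$ of (1), it yields $({^{\perp}X})^{\perp}\stackrel{X}\rightarrow 0$ in ${\rm Hasse}(\torf_{\Theta}(\mathcal{A}))$ --- an arrow ending at $0$, not the arrow $\mathcal{A}\stackrel{X}\rightarrow X^{\perp}$ claimed in (1'). Statement (1') must be obtained by dualizing the argument of (1) (which you also hint at via "argued symmetrically"), or else deduced from (2) via Proposition \ref{p-3-10} after (2) has been established; as written the appeal is a non sequitur. Second, for (2) (you argue directly; the paper routes through Proposition \ref{p-3-10} applied to (1')), both you and the paper tacitly assume $\add(X)\in\tors_{\Theta}(\mathcal{A})$, equivalently $\mathrm{T}_{\Theta}(X)=\add(X)$: your step "$\mathcal{V}\supseteq\mathrm{T}_{\Theta}(X)$, forcing $\mathcal{V}=\add(X)$" only makes sense under this identification, and so does the paper's implicit claim that ${}^{\perp}(X^{\perp})=\add(X)$. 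The identity fails whenever $X$ admits a nontrivial $\Theta$-self-extension (take $\Lambda=k[x]/x^{2}$ with its standard length and $X$ the unique simple: then $\mathrm{T}_{\Theta}(X)=\mod\Lambda$ while $\add(X)$ is not even closed under $\ast_{\Theta}$), so either a no-self-extension hypothesis or a separate verification is required. This gap is inherited from the paper, but your direct argument at least makes the assumption visible rather than burying it in the translation by Proposition \ref{p-3-10}.
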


\begin{proof} By Lemma \ref{L-4-166}, $^{\perp}X$ is a torsion class. Since ${\rm brick }((^{\perp}X)^{\perp}\cap\mathcal{A})=X$, $\mathcal{A}\stackrel{X}\rightarrow{^{\perp}X}$ is an  arrow  in {\rm Hasse}$(\tors_{\Theta}(\mathcal{A}))$ by Theorem \ref{main4}(2). Similarly, we can prove $(1)'$. By using Proposition \ref{p-3-10}, we get (2) and $(2)'$.
\end{proof}

\begin{proposition}\label{3-11} Let $(\mathcal{A},\Theta)$ be a length wide category. The arrows  in {\rm Hasse}$(\tors_{\Theta}(\mathcal{A}))$ starting at $\mathcal{A}$ are precisely $\mathcal{A}\stackrel{X}\rightarrow{^{\perp}X}$, where $X$ runs through $\Theta_{1}$. Similarly, the arrows  in {\rm Hasse}$(\tors_{\Theta}(\mathcal{A}))$ ending at $0$ are precisely $\add X\stackrel{X}\rightarrow0$, where $X$ runs through $\Theta_{1}$.
\end{proposition}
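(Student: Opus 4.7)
The plan is to combine Lemma~\ref{L-3-18}, which already furnishes these arrows in ${\rm Hasse}(\tors_{\Theta}(\mathcal{A}))$ for every $X \in \Theta_1$, with Theorem~\ref{main4}(3) to handle the converse direction in both parts. Given a Hasse arrow $\mathcal{A}\stackrel{S}{\rightarrow}\mathcal{U}$, Theorem~\ref{main4}(3) hands us $S$ as a brick in ${\rm brick}(\mathcal{U}^{\perp}\cap\mathcal{A})$ of minimal $\Theta$-length, together with the identification $\mathcal{U}=\mathcal{A}\cap{^{\perp}}S={^{\perp}}S$. The entire task therefore reduces to proving $\Theta(S)=1$.

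For this reduction, length wideness enters decisively: by Lemma~\ref{L-3-17}(3) we have $\Theta=l_{\Theta_{1}}$ with $\Theta_{1}$ a simple-minded system in $\mathcal{A}$, so Remark~\ref{r-1-3} produces, for any candidate $S$ with $\Theta(S)>1$, a non-zero $\Theta$-inflation $X\rightarrowtail S$ with $X\in\Theta_{1}$. Since $\mathcal{U}^{\perp}$ is a torsion-free class (Proposition~\ref{L-2-2}) and hence closed under $\Sub_{\Theta}$, we get $X\in\mathcal{U}^{\perp}$. But $X$ is a brick (Lemma~\ref{L-3-77}(1)) with $\Theta(X)=1<\Theta(S)$, contradicting the minimality clause of Theorem~\ref{main4}(3). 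Hence $S\in\Theta_{1}$, completing the first half.

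The second half proceeds by the same strategy on the deflation side. For a Hasse arrow $\mathcal{V}\stackrel{S}{\rightarrow}0$, Theorem~\ref{main4}(3) gives $\mathcal{V}=\mathrm{T}_{\Theta}(S)$ with $S$ a brick of minimal $\Theta$-length in $\mathcal{V}$; using Remark~\ref{r-1-3} to obtain a non-zero $\Theta$-deflation $S\twoheadrightarrow X$ with $X\in\Theta_{1}$ and invoking closure of $\mathcal{V}$ under $\Fac_{\Theta}$, the minimality argument forces $\Theta(S)=1$ just as before. To finish, I would identify $\mathrm{T}_{\Theta}(S)$ with $\add(S)$: any torsion class is closed under direct sums, so $\add(S)\subseteq\mathrm{T}_{\Theta}(S)$, while $\add(S)$ is itself a torsion class containing $S$ by Lemma~\ref{L-3-18}(2), yielding the reverse inclusion. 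The only delicate point I foresee is verifying that the morphisms supplied by Remark~\ref{r-1-3} are genuinely $\Theta$-inflations and $\Theta$-deflations in $(\mathcal{A},\Theta)$, rather than merely inflations and deflations in the ambient extriangulated structure; this is underwritten by the identification $\Theta=l_{\Theta_{1}}$ from Lemma~\ref{L-3-17} together with Proposition~\ref{p-1-2}(3), but it is the one place where the two notions of admissibility must be reconciled.
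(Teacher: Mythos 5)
Your proof is correct, and it follows the same overall strategy as the paper: Lemma~\ref{L-3-18} supplies the arrows, and Theorem~\ref{main4}(3) reduces the converse to showing that the brick label has $\Theta$-length~$1$. The one place where you diverge is in the first half. The paper's printed argument takes $0\neq M\in{^{\perp}X}$, produces a $\Theta$-deflation $M\twoheadrightarrow S$ with $S\in\Theta_1$, and then invokes closure of the torsion class ${^{\perp}X}$ under $\Fac_\Theta$ to get $S\in{^{\perp}X}$; but this places $S$ inside $\mathcal{U}={^{\perp}X}$, not inside $\mathcal{U}^{\perp}$, and the subsequent appeal to Theorem~\ref{main4}(3) to conclude $S\cong X$ would then force $X\in{^{\perp}X}$, an absurdity. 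It reads as a typo (``$\Theta$-deflation'' for ``$\Theta$-inflation'', and ``torsion class'' for ``torsion-free class''), and your version is the clean, surely intended one: start from $S\in\mathcal{U}^{\perp}$, take a $\Theta$-inflation $X\rightarrowtail S$ with $X\in\Theta_1$ (Remark~\ref{r-1-3}), and use that $\mathcal{U}^{\perp}$ is a torsion-free class (Proposition~\ref{L-2-2}), hence closed under $\Sub_\Theta$, to contradict minimality. Your second half, and the identification $\mathrm{T}_\Theta(S)=\add(S)$ via Lemma~\ref{L-3-18}(2) together with closure of torsion classes under direct sums and summands, matches the paper. You also rightly flag the only delicate point: that the (co)cones produced by Remark~\ref{r-1-3} really are $\Theta$-stable, which indeed rests on $\Theta=l_{\Theta_1}$ from Lemma~\ref{L-3-17} and Proposition~\ref{p-1-2}(3).
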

\begin{proof} Let $\mathcal{A}\stackrel{X}\rightarrow \mathcal{U}$ be an arrow in {\rm Hasse}$(\tors_{\Theta}(\mathcal{A}))$. Then Theorem \ref{main4}(3) implies that $\mathcal{U}=\mathcal{A}\cap{^{\perp}}X={^{\perp}}X$. For any $0\neq M\in {^{\perp}}X$, there exists a $\Theta$-deflation $M\twoheadrightarrow S$ for some $S\in\Theta_{1}$.  Since ${^{\perp}}X$ is a torsion class, we have $S\in{^{\perp}}X$. By Theorem \ref{main4}(3), this implies $S\cong X$ since $\Theta(X)\leq\Theta(S)$.  Thus, the  first statement follows  from Lemma  \ref{L-3-18}(1).

Now, let $\mathcal{T}\stackrel{X}\rightarrow 0$ be an arrow in {\rm Hasse}$(\tors_{\Theta}(\mathcal{A}))$. Then Theorem \ref{main4}(3) implies that $\mathcal{T}=\mathrm{T}_{\Theta}(S)$. By Proposition \ref{P-2-6}, there exists a $\Theta$-deflation $X\twoheadrightarrow S$ for some $S\in\Theta_{1}$. Since $S\in\mathcal{T}$, we have $\Theta(X)\leq\Theta(S)$ and hence $X\cong S$. One can check that $\mathcal{T}=\add X$ by Lemma \ref{L-3-18}(2).  By Lemma \ref{L-3-18}(2) again, we obtain the second statement.
\end{proof}

\begin{remark} Let $(\mathcal{A},\Theta)$ be a length category and set $\Theta'=l_{\Theta_{\infty}}$. Then $(\mathcal{A},\Theta')$ is a length wide category by Remark \ref{R-1}. Hence, we can apply Proposition \ref{3-11} to $(\mathcal{A},\Theta')$.
\end{remark}

To state our second main result, we introduce the following definition.

\begin{definition}\label{SF}  We say that a length category  $(\mathcal{A},\Theta)$ is {\em standard} if for any brick $S$ in $\mathcal{A}$, we have $\Sub_{\Theta}(S)\cap\Fac_{\Theta}(S)=\{0,S\}$.
 \end{definition}

\begin{example}\label{E-4-25} $(1)$ If $\mathcal{A}$ is an exact length category, by \cite[Exercise 8.6]{Bu},
we obtain that $\mathcal{A}$ is an abelian length category.
One can easily check that $(\mathcal{A},l_{\rm sim(\mathcal{A})})$ is standard.

$(2)$ Let $\Lambda$ be a path algebra associated to a quiver of type $A_{n}$.  Suppose that $M,N\in\ind(D^{b}(\Lambda))$ such that $\Hom_{D^{b}(\Lambda)}(M,N)\neq0$ and $\Hom_{D^{b}(\Lambda)}(N,M)\neq0$. We claim that $M\cong N$. Note that $D^{b}(\Lambda)$ is independent of the choice of orientation of $A_{n}$. We only need to consider the case $1\rightarrow 2\rightarrow\cdots\rightarrow n$. By hypothesis, there exists an integer $i$ such that $M\cong X[i]$ and $N\cong Y[i]$ for some indecomposable $\Lambda$-modules $X,Y$.
For $P\in\ind(\mod\Lambda)$, we set
$$H_{+}^{0}(P)=\{Q\in\ind(\mod \Lambda)~|~\Hom_{\Lambda}(P,Q)\neq0\}.$$
Since $M\in H_{+}^{0}(N)$ and  $N\in H_{+}^{0}(M)$, we obtain that $M\cong N$  by \cite[Lemma 3.1]{Ar}. Now, let $\mathcal{X}$ be a semibrick in $D^{b}(\Lambda)$. As a consequence of Claim, $(\Filt_{\mathcal{T}}(\mathcal{X}),l_{\mathcal{X}})$ is a  standard length category. In particular, $(D^{b}(\Lambda),l_{\mathcal{S}})$ is  standard, where $\mathcal{S}$ is a simple-minded system given in Example \ref{Derived}.
\end{example}

Let $[\mathcal{U},\mathcal{T}]$ be an interval in $\tors_{\Theta}(\mathcal{A})$. We define
$${\rm Mbrick}[\mathcal{U},\mathcal{T}]:=\{S\in{\rm brick }(\mathcal{U}^{\perp}\cap \mathcal{T})~|~\mathcal{T}\cap{^{\perp}}S\in\tors_{\Theta}(\mathcal{A})\} ,$$
$${\rm Jbrick}[\mathcal{U},\mathcal{T}]:=\{S\in{\rm brick }(\mathcal{U}^{\perp}\cap \mathcal{T})~|~\mathrm{T}_{\Theta}(\mathcal{U}\cup S)\cap{^{\perp}}S\in\tors_{\Theta}(\mathcal{A})\}.$$
In particular, we have ${\rm Mbrick}[0,\mathcal{A}]=\{S\in{\rm brick }(\mathcal{A})~|~{^{\perp}}S\in\tors_{\Theta}(\mathcal{A})\}$ and ${\rm Jbrick}[0,\mathcal{A}]=\{S\in{\rm brick } (\mathcal{A})~|~\mathrm{T}_{\Theta}(S)\cap{^{\perp}}S\in\tors_{\Theta}(\mathcal{A})\}$. Then clearly we have ${\rm Mbrick}[0,\mathcal{A}]\subseteq{\rm Jbrick}[0,\mathcal{A}]$.
\begin{remark} Suppose that $\mathcal{A}$ is an exact category. We can check that ${\rm Mbrick}[\mathcal{U},\mathcal{T}]={\rm Jbrick}[\mathcal{U},\mathcal{T}]={\rm brick }(\mathcal{U}^{\perp}\cap \mathcal{T})$. In particular, ${\rm Mbrick}[0,\mathcal{A}]={\rm Jbrick}[0,\mathcal{A}]={\rm brick }(\mathcal{A})$.
\end{remark}

Let $P=(P,\leq)$ be a partially ordered set. For any $a\in P$, we define $a_{\ast}=\bigvee\{b\in P~|~b<a\}$ and $a^{\ast}=\bigwedge\{b\in P~|~a> b\}$. An element $a\in P$ is {\em completely join-irreducible}  if $a=\bigvee L$ for some $L\subseteq P$ implies $a\in L$. In this case, $a_{\ast}$ is the unique element covered by $a$. If $L$ is a finite set, then the converse is true. One can check that $a\in P$ is completely join-irreducible if and only if $a\neq a_{\ast}$. In the same way, an element $a\in P$ is {\em completely meet-irreducible}  if $a=\bigwedge L$ for some $L\subseteq P$ implies $a\in L$, equivalently, $a\neq a^{\ast}$. We denote by {\rm j-irr}$^{\rm c}(P)$ (resp. {\rm m-irr}$^{\rm c}(P)$) the set of completely join-irreducible (resp. completely meet-irreducible) elements in $P$.

\begin{lemma}\label{3-13} Let $(\mathcal{A},\Theta)$ be a standard length category with $[\mathcal{U},\mathcal{T}]\subseteq\tors_{\Theta}(\mathcal{A})$.

$(1)$ Let $S$ be a brick in $\mathcal{U}^{\perp}$ and $S'\in\mathrm{T}_{\Theta}(\mathcal{U}\cup S)$. Then $S'\in{^{\perp}}S$ or $S\in\Fac_{\Theta}(S')$.

$(2)$ If $S\in{\rm Jbrick}[\mathcal{U},\mathcal{T}]$, then
 $$\mathcal{T}_{S}:=\mathrm{T}_{\Theta}(\mathcal{U}\cup S)\in{\rm j}\text{-}{\rm irr}^{\rm c}([\mathcal{U},\mathcal{T}])~\text{and}~\mathcal{T}_{S^\ast}=\mathcal{T}_{S}\cap{^{\perp}}S.$$
In particular, $\mathcal{T}_{S}\rightarrow\mathcal{T}_{S^\ast}$ is an arrow in {\rm Hasse}$([\mathcal{U},\mathcal{T}])$.

$(3)$ If $S\in{\rm Mbrick}[\mathcal{U},\mathcal{T}]$, then
 $$\mathcal{U}_{S}:=\mathcal{T}\cap{^{\perp}}S\in{\rm m}\text{-}{\rm irr}^{\rm c}([\mathcal{U},\mathcal{T}])~\text{and}~\mathcal{U}_{S^\ast}=\mathrm{T}_{\Theta}(\mathcal{U}_{S}\cup S).$$
In particular, $\mathcal{U}_{S^\ast}\rightarrow\mathcal{U}_{S}$ is an arrow in {\rm Hasse}$([\mathcal{U},\mathcal{T}])$.
\end{lemma}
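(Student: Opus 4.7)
The plan is to attack (1) first, since both (2) and (3) reduce to it via torsion-pair decompositions. Suppose $g\colon S'\to S$ is a nonzero morphism with $\Theta$-decomposition $(i_g,X_g,j_g)$. Then $0\neq X_g\in\mathcal{U}^{\perp}\cap\mathrm{T}_{\Theta}(\mathcal{U}\cup S)$: the inflation $j_g$ places $X_g$ in $\Sub_{\Theta}(S)\subseteq\mathcal{U}^{\perp}$ (torsion-free classes are closed under $\Sub_{\Theta}$), while the deflation $i_g$ makes $X_g$ a $\Theta$-quotient of $S'\in\mathrm{T}_{\Theta}(\mathcal{U}\cup S)$. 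I aim to show $X_g\cong S$, whence $S\in\Fac_{\Theta}(S')$ via $i_g$. By Propositions \ref{P-2-6} and \ref{11-11}, $\mathrm{T}_{\Theta}(\mathcal{U}\cup S)=\Filt_{\Theta}(\Fac_{\Theta}(\mathcal{U})\cup\Fac_{\Theta}(S))$ inside the extriangulated structure $(\mathcal{A},\mathbb{E}_{\Theta},\mathfrak{s}_{\Theta})$; invoking Proposition \ref{p-1-1}(3), I peel off a minimal first piece $Y\rightarrowtail X_g$ with $0\neq Y\in\Fac_{\Theta}(\mathcal{U})\cup\Fac_{\Theta}(S)$. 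The case $Y\in\Fac_{\Theta}(\mathcal{U})\subseteq\mathcal{U}$ is ruled out because $Y\in\Sub_{\Theta}(X_g)\subseteq\mathcal{U}^{\perp}$ and $\mathcal{U}\cap\mathcal{U}^{\perp}=0$. Hence $Y\in\Fac_{\Theta}(S)$, and composing the $\Theta$-inflations $Y\rightarrowtail X_g\rightarrowtail S$ puts $Y\in\Sub_{\Theta}(S)\cap\Fac_{\Theta}(S)=\{0,S\}$ by standardness, forcing $Y\cong S$. The length bookkeeping $\Theta(S)=\Theta(Y)\leq\Theta(X_g)\leq\Theta(S)$ combined with Lemma \ref{L2-4} then gives $X_g\cong S$.

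For part (2), the inclusion $\mathcal{T}_S\cap{^{\perp}}S\subseteq\mathcal{T}_{S^\ast}$ is immediate: the Jbrick hypothesis makes $\mathcal{T}_S\cap{^{\perp}}S$ a torsion class lying in $[\mathcal{U},\mathcal{T}]$ (using $\mathcal{U}\subseteq{^{\perp}}S$ since $S\in\mathcal{U}^{\perp}$), and it is strictly below $\mathcal{T}_S$ because $S\in\mathcal{T}_S\setminus{^{\perp}}S$. For the reverse, take any $\mathcal{V}\in[\mathcal{U},\mathcal{T}]$ with $\mathcal{V}\subsetneq\mathcal{T}_S$ and suppose $\mathcal{V}\not\subseteq{^{\perp}}S$. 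Choosing $V\in\mathcal{V}\setminus{^{\perp}}S$, we have $V\in\mathcal{T}_S=\mathrm{T}_{\Theta}(\mathcal{U}\cup S)$, so part (1) yields a $\Theta$-deflation $V\twoheadrightarrow S$; closure of $\mathcal{V}$ under $\Fac_{\Theta}$ forces $S\in\mathcal{V}$ and hence $\mathcal{T}_S\subseteq\mathcal{V}$, contradicting $\mathcal{V}\subsetneq\mathcal{T}_S$. Thus $\mathcal{V}\subseteq\mathcal{T}_S\cap{^{\perp}}S$, and joining over all such $\mathcal{V}$ gives $\mathcal{T}_{S^\ast}=\mathcal{T}_S\cap{^{\perp}}S$. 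The strict inequality $\mathcal{T}_S\neq\mathcal{T}_{S^\ast}$, witnessed by $S$, supplies complete join-irreducibility and the Hasse arrow.

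Part (3) uses (1) with different parameters. Since $\mathrm{T}_{\Theta}(\mathcal{U}_S\cup S)>\mathcal{U}_S$ lies in $[\mathcal{U},\mathcal{T}]$, it suffices to show every $\mathcal{W}\in[\mathcal{U},\mathcal{T}]$ with $\mathcal{W}>\mathcal{U}_S$ contains $S$; that will force $\mathcal{W}\supseteq\mathrm{T}_{\Theta}(\mathcal{U}_S\cup S)$ and hence $\mathcal{U}_{S^\ast}=\mathrm{T}_{\Theta}(\mathcal{U}_S\cup S)$, along with the Hasse arrow. Decompose $S$ via the torsion pair $(\mathcal{W},\mathcal{W}^{\perp})$ as $W'\rightarrowtail S\twoheadrightarrow F'$. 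If $W'=0$, then $S\cong F'\in\mathcal{W}^{\perp}$ forces $\mathcal{W}\subseteq{^{\perp}}S$ and thus $\mathcal{W}\subseteq\mathcal{T}\cap{^{\perp}}S=\mathcal{U}_S$, contradicting $\mathcal{W}>\mathcal{U}_S$; if $F'=0$, then $S\cong W'\in\mathcal{W}$, as desired. In the remaining case both are nonzero. Since $\mathcal{U}_S\subseteq{^{\perp}}S$ we have $S\in\mathcal{U}_S^{\perp}$, and $F'\in\Fac_{\Theta}(S)\subseteq\mathrm{T}_{\Theta}(\mathcal{U}_S\cup S)$, so (1) applies with $\mathcal{U}_S$ in place of $\mathcal{U}$: either $F'\in{^{\perp}}S$, placing $F'$ in $\mathcal{T}\cap{^{\perp}}S=\mathcal{U}_S\subseteq\mathcal{W}$ so that $F'\in\mathcal{W}\cap\mathcal{W}^{\perp}=0$ (contradiction); or $S\in\Fac_{\Theta}(F')$, yielding $\Theta(F')\geq\Theta(S)$ which clashes with $\Theta(S)=\Theta(W')+\Theta(F')$ and $W'\neq 0$. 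The main obstacle is part (1), which threads the $\mathrm{T}_{\Theta}$-filtration through both the orthogonality $\mathcal{U}\cap\mathcal{U}^{\perp}=0$ and the standardness condition; once this is in hand, parts (2) and (3) follow by essentially routine torsion-pair manipulations.
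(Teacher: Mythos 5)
Your proofs of parts~(1) and~(2) follow the paper's argument essentially verbatim. For~(1) the paper and you both reduce to showing $X_g\cong S$ via a first filtration piece in $\Fac_\Theta(S)$; the paper simply asserts the existence of a $\Theta$-inflation $M\rightarrowtail X_f$ with $M\in\Fac_\Theta(S)$, while you make explicit the step of peeling off a first piece from the $\Filt_\Theta(\mathcal{U}\cup\Fac_\Theta(S))$-filtration and ruling out the $\mathcal{U}$-case using $\mathcal{U}\cap\mathcal{U}^\perp=0$ --- this is exactly the reasoning implicit in the paper and fills a small gap in the exposition. Part~(2) is identical in substance to the paper's: take $\mathcal{V}\subsetneq\mathcal{T}_S$ in the interval, and if $\mathcal{V}\not\subseteq{^\perp}S$ then~(1) forces $S\in\mathcal{V}$ and hence $\mathcal{T}_S\subseteq\mathcal{V}$, a contradiction.

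For part~(3), the paper only writes ``Similarly, we can prove (3),'' presumably gesturing at a dualization. You instead give a genuine, complete argument that stays on the torsion-class side and reuses~(1) as stated: decompose $S$ against the torsion pair $(\mathcal{W},\mathcal{W}^\perp)$ for any $\mathcal{W}\in[\mathcal{U},\mathcal{T}]$ with $\mathcal{W}\supsetneq\mathcal{U}_S$, and rule out $W'=0$ (it would force $\mathcal{W}\subseteq\mathcal{U}_S$) and the mixed case (by applying~(1) with $\mathcal{U}_S$ in place of $\mathcal{U}$ to the quotient $F'\in\Fac_\Theta(S)$, getting a contradiction either with $\mathcal{W}\cap\mathcal{W}^\perp=0$ or with $\Theta(S)=\Theta(W')+\Theta(F')$). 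The hypotheses needed to invoke~(1) with $\mathcal{U}_S$ in place of $\mathcal{U}$ --- namely $S\in\mathcal{U}_S^\perp$ a brick and $F'\in\mathrm{T}_\Theta(\mathcal{U}_S\cup S)$ --- are indeed satisfied. This is a clean and self-contained substitute for the paper's implied dual argument, and it has the advantage of not requiring a separately stated dual of~(1). The proposal is correct.
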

\begin{proof} $(1)$ If $S'\in{^{\perp}S}$, then the statement is trivial, so we assume there exists a non-zero morphism $f:S'\rightarrow S$. We take a $\Theta$-decomposition $(i_{f},X_{f},j_{f} )$ for $f$. By Proposition \ref{P-2-6}, we can check that $\mathrm{T}_{\Theta}(\mathcal{U}\cup S)=\Filt_{\Theta}(\mathcal{U}\cup\Fac_{\Theta}(S))$. Since $X_{f}\in \mathcal{U}^{\perp}\cap \mathrm{T}_{\Theta}(\mathcal{U}\cup S)$, there exists a $\Theta$-inflation $g:M\rightarrowtail X_{f}$ with $M\in\Fac_{\Theta}(S)$. Then $j_{f}g$ is  a $\Theta$-inflation and hence $M\in\Fac_{\Theta}(S)\cap\Sub_{\Theta}(S)$. Since $(\mathcal{A},\Theta)$ is  standard, we have $M\cong S\cong X_{f}$ and then $j_{f}$ is an isomorphism. This implies that $f\cong i_{f}$, thus $S\in\Fac_{\Theta}(S')$.

$(2)$ First of all, it is immediate that $\mathcal{U}\subseteq \mathrm{T}_{S}\cap{^{\perp}}S\subsetneq \mathrm{T}_{S}\subseteq \mathcal{T}$. Suppose that $\mathcal{V}\in[\mathcal{U},\mathcal{T}_{S}]$ with $\mathcal{V}\subsetneq\mathcal{T}_{S}$. If $\mathcal{V}\subseteq{^{\perp}}S$, then $\mathcal{V}\subseteq\mathrm{T}_{S}\cap{^{\perp}}S$. Otherwise,  there exists a non-zero morphism $f:S'\rightarrow S$ for some $S'\in\mathcal{V}$.  Thus  (1) implies that $S\in\Fac_{\Theta}(S')$ and then  $\mathcal{V}=\mathcal{T}_{S}$, this is a contradiction. This shows that $\mathcal{T}_{S^\ast}=\mathcal{T}_{S}\cap{^{\perp}}S$ and
 $\mathcal{T}_{S}\in{\rm j}\text{-}{\rm irr}^{\rm c}([\mathcal{U},\mathcal{T}])$. Similarly, we can prove (3).
\end{proof}
The second main result of this subsection is the following.
\begin{theorem}\label{main-N} Let $(\mathcal{A},\Theta)$ be a standard length category with $[\mathcal{U},\mathcal{T}]\subseteq\tors_{\Theta}(\mathcal{A})$.

$(1)$ We have $\#[\mathcal{U},\mathcal{T}]=2$ if and only if $\#${\rm brick }$(\mathcal{U}^{\perp}\cap \mathcal{T})=1$. In this case, there exists a unique brick $S\in{\rm brick }(\mathcal{U}^{\perp}\cap \mathcal{T})$ such that
$$\mathcal{T}=\mathrm{T}_{\Theta}(\mathcal{U}\cup  S),~\mathcal{U}=\mathcal{T}\cap {^{\perp}}S~\text{\rm and}~\mathcal{U}^{\perp}\cap \mathcal{T}=\Filt_{\Theta}(S).$$
In particular, $\mathcal{T}\stackrel{X}\rightarrow \mathcal{U}$ is an arrow in {\rm Hasse}$([\mathcal{U},\mathcal{T}])$.

$(2)$ There exists a bijection
$$ \mathcal{T}_{?}:{\rm Jbrick}[\mathcal{U},\mathcal{T}]\longrightarrow {\rm j}\text{-}{\rm irr}^{\rm c}([\mathcal{U},\mathcal{T}]);~S\longmapsto \mathrm{T}_{\Theta}(\mathcal{U}\cup S).$$
In this case, $\mathcal{T}_{S^\ast}=\mathcal{T}_{S}\cap{^{\perp}}S$ and $\mathcal{T}_{S}\stackrel{S}\rightarrow\mathcal{T}_{S^\ast}$ is the only arrow starting at $\mathcal{T}_{S}$.

$(3)$ There exists a bijection
$$ \mathcal{U}_{?}:{\rm Mbrick}[\mathcal{U},\mathcal{T}]\longrightarrow {\rm m}\text{-}{\rm irr}^{\rm c}([\mathcal{U},\mathcal{T}]);~S\longmapsto \mathcal{T}\cap{^{\perp}}S.$$
In this case, $\mathcal{U}_{S^\ast}=\mathrm{T}_{\Theta}(\mathcal{U}_{S}\cup S)$ and $\mathcal{U}_{S^\ast}\stackrel{S}\rightarrow\mathcal{U}_{S}$ is the only arrow ending at $\mathcal{U}_{S}$.

$(4)$ There exists two bijections
$$ {\rm Jbrick}[0,\mathcal{A}]\longrightarrow {\rm j}\text{-}{\rm irr}^{\rm c}(\tors_{\Theta}(\mathcal{A}));~S\longmapsto \mathrm{T}_{\Theta}(S),$$
$${\rm Mbrick}[0,\mathcal{A}]\longrightarrow {\rm m}\text{-}{\rm irr}^{\rm c}(\tors_{\Theta}(\mathcal{A}));~S\longmapsto {^{\perp}}S.$$
In particular, ${\rm m}\text{-}{\rm irr}^{\rm c}(\tors_{\Theta}(\mathcal{A}))$ is a subset of ${\rm j}\text{-}{\rm irr}^{\rm c}(\tors_{\Theta}(\mathcal{A})).$
\end{theorem}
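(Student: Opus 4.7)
The plan is to prove parts (1)--(4) in sequence, using Theorem~\ref{main4} and Lemma~\ref{3-13} as the principal tools, with the standardness hypothesis (Definition~\ref{SF}) supplying all uniqueness assertions.

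For part~(1), the ``if'' direction is exactly Theorem~\ref{main4}(2), so only the ``only if'' direction requires work. Assuming $\#[\mathcal{U},\mathcal{T}]=2$, Theorem~\ref{main4}(3) already produces a brick $S$ satisfying (ii), and its uniqueness is obtained by combining condition~(iii) of that theorem---which places $S$ in $\Sub_{\Theta}(S')\cap\Fac_{\Theta}(S')$ for every other brick $S'\in{\rm brick}(\mathcal{U}^{\perp}\cap\mathcal{T})$---with the standardness equality $\Sub_{\Theta}(S')\cap\Fac_{\Theta}(S')=\{0,S'\}$, forcing $S\cong S'$. The identity $\mathcal{U}^{\perp}\cap\mathcal{T}=\Filt_{\Theta}(S)$ is then immediate from Lemma~\ref{L-2-4}.

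For part~(2), well-definedness of $\mathcal{T}_{?}$ and the identification $\mathcal{T}_{S^{\ast}}=\mathcal{T}_{S}\cap{^{\perp}}S$ are supplied by Lemma~\ref{3-13}(2). For injectivity, if $\mathcal{T}_{S}=\mathcal{T}_{S'}$, then the unique cocovers coincide and both $S,S'$ are bricks in the length-two interval $[\mathcal{T}_{S^{\ast}},\mathcal{T}_{S}]$, hence $S\cong S'$ by part~(1). For surjectivity, given $\mathcal{V}\in{\rm j\text{-}irr}^{\rm c}([\mathcal{U},\mathcal{T}])$ with unique cocover $\mathcal{V}_{\ast}$, I apply part~(1) to $[\mathcal{V}_{\ast},\mathcal{V}]$ to produce a unique brick $S$ with $\mathcal{V}=\mathrm{T}_{\Theta}(\mathcal{V}_{\ast}\cup S)$ and $\mathcal{V}_{\ast}=\mathcal{V}\cap{^{\perp}}S$. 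Since $S\in\mathcal{V}\setminus\mathcal{V}_{\ast}$, the torsion class $\mathrm{T}_{\Theta}(\mathcal{U}\cup S)$ lies in $[\mathcal{U},\mathcal{V}]$ but is not contained in the unique cocover $\mathcal{V}_{\ast}$, and so must equal $\mathcal{V}$; membership $S\in{\rm Jbrick}[\mathcal{U},\mathcal{T}]$ then follows from $\mathcal{V}\cap{^{\perp}}S=\mathcal{V}_{\ast}\in\tors_{\Theta}(\mathcal{A})$. Part~(3) is established dually, using Lemma~\ref{3-13}(3) in place of Lemma~\ref{3-13}(2).

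Part~(4) is obtained by specializing (2) and (3) to $\mathcal{U}=0$ and $\mathcal{T}=\mathcal{A}$. The main obstacle, in my view, is the trailing inclusion ${\rm m\text{-}irr}^{\rm c}(\tors_{\Theta}(\mathcal{A}))\subseteq{\rm j\text{-}irr}^{\rm c}(\tors_{\Theta}(\mathcal{A}))$: the inclusion ${\rm Mbrick}[0,\mathcal{A}]\subseteq{\rm Jbrick}[0,\mathcal{A}]$ (an intersection of two torsion classes is a torsion class) only yields a map between the indexing sets and does not automatically furnish a set-theoretic containment at the level of the corresponding torsion classes. My intended attack is to take $\mathcal{V}={^{\perp}}S\in{\rm m\text{-}irr}^{\rm c}$ and leverage Theorem~\ref{main2}(4)--(5) (complete semidistributivity and algebraicity) together with part~(3) to exhibit an explicit unique maximal proper sub-torsion-class of $\mathcal{V}$, which I expect to be the most delicate step in the entire argument.
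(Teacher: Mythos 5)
Your reconstruction of parts (1)--(3) is essentially the paper's argument. For (1), the paper likewise appeals to Theorem~\ref{main4} together with Lemma~\ref{L-2-4}, and your use of standardness (condition (iii) of Theorem~\ref{main4}(3) plus $\Sub_{\Theta}(S')\cap\Fac_{\Theta}(S')=\{0,S'\}$) to pin down $S$ is exactly what makes the forward implication work. Your injectivity argument in (2) differs slightly from the paper's: you apply part~(1) to the two-element interval $[\mathcal{T}_{S^{\ast}},\mathcal{T}_{S}]$ after observing that $S$ and $S'$ both lie in ${\rm brick}(\mathcal{T}_{S^{\ast}}^{\perp}\cap\mathcal{T}_{S})$, whereas the paper uses Lemma~\ref{3-13}(1) directly to obtain $S\in\Fac_{\Theta}(S')$ and $S'\in\Fac_{\Theta}(S)$ and then compares $\Theta$-lengths. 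Both routes are valid; yours is arguably a bit cleaner since it re-uses (1). Your surjectivity argument is the paper's. Part~(3) by duality is fine.

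On part~(4), your instinct that the trailing containment ``${\rm m}\text{-}{\rm irr}^{\rm c}(\tors_{\Theta}(\mathcal{A}))\subseteq{\rm j}\text{-}{\rm irr}^{\rm c}(\tors_{\Theta}(\mathcal{A}))$'' does not follow formally from the two bijections is correct --- the bijections send $S$ to $\mathrm{T}_{\Theta}(S)$ and to ${^{\perp}}S$ respectively, which are generally distinct torsion classes, so ${\rm Mbrick}[0,\mathcal{A}]\subseteq{\rm Jbrick}[0,\mathcal{A}]$ only yields a canonical \emph{injection} ${\rm m}\text{-}{\rm irr}^{\rm c}\hookrightarrow{\rm j}\text{-}{\rm irr}^{\rm c}$, not a set-theoretic inclusion. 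However, the remedy you propose (using complete semidistributivity and algebraicity to exhibit a unique cocover of ${^{\perp}}S$) cannot succeed, because the literal inclusion is false. Take $\mathcal{A}=\mod k$ with $\Theta$ the usual length: the lattice $\tors_{\Theta}(\mathcal{A})$ is the two-element chain $\{0,\mathcal{A}\}$, and one computes ${\rm m}\text{-}{\rm irr}^{\rm c}=\{0\}$ while ${\rm j}\text{-}{\rm irr}^{\rm c}=\{\mathcal{A}\}$, which are disjoint. So the ``In particular'' sentence can only mean the cardinality statement together with the canonical injection induced by ${\rm Mbrick}[0,\mathcal{A}]\subseteq{\rm Jbrick}[0,\mathcal{A}]$ --- which does follow immediately from (2) and (3), as the paper asserts. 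Do not spend effort trying to upgrade it to an honest set containment.
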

\begin{proof} (1) This follows immediately from  Lemma \ref{L-2-4} and  Theorem \ref{main4}.

$(2)$ Firstly, we prove that the map $\mathcal{T}_{?}:{\rm Jbrick}[\mathcal{U},\mathcal{T}]\rightarrow {\rm j}\text{-}{\rm irr}^{\rm c}([\mathcal{U},\mathcal{T}])$ is well-defined. Take any $S\in{\rm Jbrick}[\mathcal{U},\mathcal{T}]$. By Lemma \ref{3-13}, we have
 $\mathcal{T}_{S}\in{\rm j}\text{-}{\rm irr}^{\rm c}([\mathcal{U},\mathcal{T}])~\text{and}~\mathcal{T}_{S^\ast}=\mathcal{T}_{S}\cap{^{\perp}}S.$
Suppose that there  exists a brick $S'\in {\rm Jbrick}[\mathcal{U},\mathcal{T}]$ such that $\mathcal{T}_{S'}=\mathcal{T}_{S}$. If $S'\in{^{\perp}S}$, then $\mathcal{T}_{S'}\subseteq \mathcal{T}_{S^\ast}\subsetneq\mathcal{T}_{S}$, this is a contradiction. Then there exists a non-zero morphism $S'\rightarrow S$. Since $S'\in\mathcal{T}_{S}$, we get $S\in\Fac_{\Theta}(S')$ by Lemma \ref{3-13}(1). By symmetry, we also have $S'\in\Fac_{\Theta}(S)$ and hence $S\cong S'$.

Next we show that  $\mathcal{T}_{?}$ is inverse.  For any $\mathcal{V}\in${\rm j-irr}$^{\rm c}([\mathcal{U},\mathcal{T}])$, we have $\#[\mathcal{V}_{\ast},\mathcal{V}]=2$. By (1),  there exists a unique brick $S\in{\rm brick }(\mathcal{V}_{\ast}^{\perp}\cap \mathcal{V})$ such that
$$\mathcal{V}=\mathrm{T}_{\Theta}(\mathcal{V}_{\ast}\cup S),~\mathcal{V}_{\ast}=\mathcal{V}\cap {^{\perp}}S~\text{\rm and}~\mathcal{V}_{\ast}^{\perp}\cap \mathcal{V}=\Filt_{\mathcal{A}}(S).$$
Note that $\mathrm{T}_{\Theta}(\mathcal{U}\cup S)\subseteq \mathcal{V}$ and  $\mathrm{T}_{\Theta}(\mathcal{U}\cup S)\nsubseteq\mathcal{V}_{\ast}$.  This implies $\mathcal{V}=\mathrm{T}_{\Theta}(\mathcal{U}\cup S)$ and hence $\mathrm{T}_{\Theta}(\mathcal{U}\cup S)\cap {^{\perp}}S=\mathcal{V}_{\ast}$ is a torsion class. It follows that  $S\in{\rm Jbrick}[\mathcal{U},\mathcal{T}]$. Then we can define a map  $$\mathbf{L}:{\rm j}\text{-}{\rm irr}^{\rm c}([\mathcal{U},\mathcal{T}])\longrightarrow {\rm Jbrick}[\mathcal{U},\mathcal{T}];~\mathcal{V}\longmapsto S.$$
Suppose that there exists a torsion class $\mathcal{V}'\in${\rm j-irr}$^{\rm c}([\mathcal{U},\mathcal{T}])$ such that $\mathbf{L}(\mathcal{V}')=S$. Then $\mathcal{V}=\mathrm{T}_{\Theta}(\mathcal{U}\cup S)=\mathcal{V'}$. Hence, the map $\mathbf{L}$ is well-defined.
For any $S\in{\rm Jbrick}[\mathcal{U},\mathcal{T}]$, there exists an arrow $\mathcal{T}_{S}\stackrel{S}\rightarrow \mathcal{T}_{S^\ast}$ in  {\rm Hasse}$([\mathcal{U},\mathcal{T}])$ such that ${\rm brick }(\mathcal{T}_{S^\ast}^{\perp}\cap \mathcal{T}_{S})=S$. This proves $\mathbf{L}\mathcal{T}_{?}=\id$.  Conversely, we assume that $\mathcal{V}\in${\rm j-irr}$^{\rm c}([\mathcal{U},\mathcal{T}])$. Then ${\rm brick }(\mathcal{V}_{\ast}^{\perp}\cap \mathcal{V})=\{\mathbf{L}(\mathcal{V})\}$ and hence $\mathcal{T}_{\mathbf{L}(\mathcal{V})}=\mathrm{T}_{\Theta}(\mathcal{U}\cup \mathbf{L}(V))\subseteq\mathcal{V}$. Since $\mathrm{T}_{\Theta}(\mathcal{U}\cup \mathbf{L}(\mathcal{V}))\nsubseteq \mathcal{V}\cap {^{\perp}}\mathbf{L}(\mathcal{V})=\mathcal{V}_{\ast}$, we conclude that  $\mathcal{T}_{\mathbf{L}(\mathcal{V})}=\mathcal{V}$. This proves $\mathcal{T}_{?}\mathbf{L}=\id$.

$(3)$ Using the analogous arguments  used in (2), we can prove $\mathcal{U}_{?}$ is a bijection.

$(4)$ It follows immediately from (2) and (3).
\end{proof}

\begin{remark} This is a generalization of the classical result \cite[Theorems 3.3, 3.4]{De} of Demonet-Iyama-Reading-Reiten-Thomas for a finitely generated module category $\mathcal{A}$.
\end{remark}

We finish this subsection with a straightforward example illustrating Theorem \ref{main-N}.
\begin{example}\label{ED}  Keep the notation used in Example \ref{E-2-22}. By Example \ref{E-4-25}(2),  $(\mathcal{A},l_{\mathcal{Y}})$ is a standard length category. It is easy to check that ${\rm Mbrick}[0,\mathcal{A}]={\rm Jbrick}[0,\mathcal{A}]= {\rm brick}\mathcal{A}=\{P_{1},S_{1}[-1],S_{2}[-1],S_{3},P_{2},I_{2}[-1]\}.$ Let us list all 6 bricks, the corresponding completely meet-irreducible elements and completely join-irreducible elements as follows:
$$
\begin{tabular}{|p{1.1cm}|p{6.3cm}|p{7.8cm}|}
\hline
 brick& completely join-irreducible element&    completely meet-irreducible element\\
\hline
$P_{1}$&  $\add(P_{1})$  &   $\add(\{S_{2}[-1],S_{1}[-1],I_{2}[-1]\})={^{\perp}P_{1}}$ \\
\hline
$S_{1}[-1]$&    $\add(S_{1}[-1])$          &   $\add(\{S_{2}[-1],P_{1},P_{2},S_{3}\})={^{\perp}S_{1}}[-1]$  \\
\hline
$S_{2}[-1]$&       $\add(S_{2}[-1])$                &   $\add(\{S_{1}[-1],P_{1},P_{2},S_{3},I_{2}[-1]\})={^{\perp}S_{2}}[-1]$   \\
\hline
$S_{3}$&      $\add(\{S_{3},P_{2},P_{1}\})$        &  $\add(\{P_{2},P_{1}\})$        \\
\hline
$P_{2}$&      $\add(\{P_{2},P_{1}\})$        &   $\add(\{S_{2}[-1],P_{1}\})$  \\
\hline
$I_{2}[-1]$&      $\add(\{I_{2}[-1],S_{1}[-1]\})$            &   $\add(\{S_{1}[-1],P_{1},P_{2},P_{3}\})$    \\
\hline
\end{tabular}
$$
\tikzset{every picture/.style={line width=0.75pt}} %set default line width to 0.75pt
The labelled Hasse quiver of $\tors_{l_{\mathcal{Y}}}(\mathcal{A})$ is given by
$$
\begin{tikzpicture}[x=0.75pt,y=0.75pt,yscale=-1,xscale=1]
%uncomment if require: \path (0,1353); %set diagram left start at 0, and has height of 1353

%Straight Lines [id:da7671954333242379]
\draw    (187.68,613.14) -- (309.02,685.76) ;
\draw [shift={(310.74,686.79)}, rotate = 210.9] [color={rgb, 255:red, 0; green, 0; blue, 0 }  ][line width=0.75]    (10.93,-3.29) .. controls (6.95,-1.4) and (3.31,-0.3) .. (0,0) .. controls (3.31,0.3) and (6.95,1.4) .. (10.93,3.29)   ;
%Straight Lines [id:da3875578611132118]
\draw    (319.22,609.16) -- (319.19,682.73) ;
\draw [shift={(319.18,684.73)}, rotate = 270.02] [color={rgb, 255:red, 0; green, 0; blue, 0 }  ][line width=0.75]    (10.93,-3.29) .. controls (6.95,-1.4) and (3.31,-0.3) .. (0,0) .. controls (3.31,0.3) and (6.95,1.4) .. (10.93,3.29)   ;
%Straight Lines [id:da6288859834256743]
\draw    (464.29,614.14) -- (329.64,687.76) ;
\draw [shift={(327.89,688.72)}, rotate = 331.33] [color={rgb, 255:red, 0; green, 0; blue, 0 }  ][line width=0.75]    (10.93,-3.29) .. controls (6.95,-1.4) and (3.31,-0.3) .. (0,0) .. controls (3.31,0.3) and (6.95,1.4) .. (10.93,3.29)   ;
%Straight Lines [id:da7238085534998814]
\draw    (187.77,514.7) -- (186.71,585.16) ;
\draw [shift={(186.68,587.16)}, rotate = 270.86] [color={rgb, 255:red, 0; green, 0; blue, 0 }  ][line width=0.75]    (10.93,-3.29) .. controls (6.95,-1.4) and (3.31,-0.3) .. (0,0) .. controls (3.31,0.3) and (6.95,1.4) .. (10.93,3.29)   ;
%Straight Lines [id:da785194068588724]
\draw    (226.73,508.2) -- (457.89,591.62) ;
\draw [shift={(459.77,592.3)}, rotate = 199.84] [color={rgb, 255:red, 0; green, 0; blue, 0 }  ][line width=0.75]    (10.93,-3.29) .. controls (6.95,-1.4) and (3.31,-0.3) .. (0,0) .. controls (3.31,0.3) and (6.95,1.4) .. (10.93,3.29)   ;
%Straight Lines [id:da3628565944379363]
\draw    (60.48,377.9) -- (154.92,591.57) ;
\draw [shift={(155.73,593.4)}, rotate = 246.15] [color={rgb, 255:red, 0; green, 0; blue, 0 }  ][line width=0.75]    (10.93,-3.29) .. controls (6.95,-1.4) and (3.31,-0.3) .. (0,0) .. controls (3.31,0.3) and (6.95,1.4) .. (10.93,3.29)   ;
%Straight Lines [id:da8071667376987339]
\draw    (198.48,383.9) -- (189.18,484.81) ;
\draw [shift={(189,486.8)}, rotate = 275.27] [color={rgb, 255:red, 0; green, 0; blue, 0 }  ][line width=0.75]    (10.93,-3.29) .. controls (6.95,-1.4) and (3.31,-0.3) .. (0,0) .. controls (3.31,0.3) and (6.95,1.4) .. (10.93,3.29)   ;
%Straight Lines [id:da13739503838531308]
\draw    (191.48,219.3) -- (73.38,343.75) ;
\draw [shift={(72,345.2)}, rotate = 313.5] [color={rgb, 255:red, 0; green, 0; blue, 0 }  ][line width=0.75]    (10.93,-3.29) .. controls (6.95,-1.4) and (3.31,-0.3) .. (0,0) .. controls (3.31,0.3) and (6.95,1.4) .. (10.93,3.29)   ;
%Straight Lines [id:da004441438428215538]
\draw    (204.48,226.3) -- (200.07,344.2) ;
\draw [shift={(200,346.2)}, rotate = 272.14] [color={rgb, 255:red, 0; green, 0; blue, 0 }  ][line width=0.75]    (10.93,-3.29) .. controls (6.95,-1.4) and (3.31,-0.3) .. (0,0) .. controls (3.31,0.3) and (6.95,1.4) .. (10.93,3.29)   ;
%Straight Lines [id:da6370283407120139]
\draw    (282.48,462.5) -- (320.39,580.5) ;
\draw [shift={(321,582.4)}, rotate = 252.19] [color={rgb, 255:red, 0; green, 0; blue, 0 }  ][line width=0.75]    (10.93,-3.29) .. controls (6.95,-1.4) and (3.31,-0.3) .. (0,0) .. controls (3.31,0.3) and (6.95,1.4) .. (10.93,3.29)   ;
%Straight Lines [id:da05073717410704659]
\draw    (77,378.8) -- (262.57,434.33) ;
\draw [shift={(264.48,434.9)}, rotate = 196.66] [color={rgb, 255:red, 0; green, 0; blue, 0 }  ][line width=0.75]    (10.93,-3.29) .. controls (6.95,-1.4) and (3.31,-0.3) .. (0,0) .. controls (3.31,0.3) and (6.95,1.4) .. (10.93,3.29)   ;
%Straight Lines [id:da6745208202246166]
\draw    (398.48,452.9) -- (333.37,584.71) ;
\draw [shift={(332.48,586.5)}, rotate = 296.29] [color={rgb, 255:red, 0; green, 0; blue, 0 }  ][line width=0.75]    (10.93,-3.29) .. controls (6.95,-1.4) and (3.31,-0.3) .. (0,0) .. controls (3.31,0.3) and (6.95,1.4) .. (10.93,3.29)   ;
%Straight Lines [id:da22460216260830923]
\draw    (406.48,352.9) -- (406.99,422.8) ;
\draw [shift={(407,424.8)}, rotate = 269.59] [color={rgb, 255:red, 0; green, 0; blue, 0 }  ][line width=0.75]    (10.93,-3.29) .. controls (6.95,-1.4) and (3.31,-0.3) .. (0,0) .. controls (3.31,0.3) and (6.95,1.4) .. (10.93,3.29)   ;
%Straight Lines [id:da9512818974893332]
\draw    (384.48,270.3) -- (403.82,325.41) ;
\draw [shift={(404.48,327.3)}, rotate = 250.67] [color={rgb, 255:red, 0; green, 0; blue, 0 }  ][line width=0.75]    (10.93,-3.29) .. controls (6.95,-1.4) and (3.31,-0.3) .. (0,0) .. controls (3.31,0.3) and (6.95,1.4) .. (10.93,3.29)   ;
%Straight Lines [id:da9587663000143876]
\draw    (214,221.2) -- (288.62,250.57) ;
\draw [shift={(290.48,251.3)}, rotate = 201.48] [color={rgb, 255:red, 0; green, 0; blue, 0 }  ][line width=0.75]    (10.93,-3.29) .. controls (6.95,-1.4) and (3.31,-0.3) .. (0,0) .. controls (3.31,0.3) and (6.95,1.4) .. (10.93,3.29)   ;
%Straight Lines [id:da2842793570604183]
\draw    (506.22,527.7) -- (476.84,590.99) ;
\draw [shift={(476,592.8)}, rotate = 294.9] [color={rgb, 255:red, 0; green, 0; blue, 0 }  ][line width=0.75]    (10.93,-3.29) .. controls (6.95,-1.4) and (3.31,-0.3) .. (0,0) .. controls (3.31,0.3) and (6.95,1.4) .. (10.93,3.29)   ;
%Straight Lines [id:da9626110545882134]
\draw    (461.22,446.8) -- (475.04,462.95) -- (506.92,500.18) ;
\draw [shift={(508.22,501.7)}, rotate = 229.43] [color={rgb, 255:red, 0; green, 0; blue, 0 }  ][line width=0.75]    (10.93,-3.29) .. controls (6.95,-1.4) and (3.31,-0.3) .. (0,0) .. controls (3.31,0.3) and (6.95,1.4) .. (10.93,3.29)   ;
%Straight Lines [id:da5348313450505894]
\draw    (549.22,430.7) -- (538.44,455.73) -- (519.01,500.86) ;
\draw [shift={(518.22,502.7)}, rotate = 293.29] [color={rgb, 255:red, 0; green, 0; blue, 0 }  ][line width=0.75]    (10.93,-3.29) .. controls (6.95,-1.4) and (3.31,-0.3) .. (0,0) .. controls (3.31,0.3) and (6.95,1.4) .. (10.93,3.29)   ;
%Straight Lines [id:da06399284359830437]
\draw    (278,372.2) -- (532.48,417.2) ;
\draw [shift={(534.45,417.55)}, rotate = 190.03] [color={rgb, 255:red, 0; green, 0; blue, 0 }  ][line width=0.75]    (10.93,-3.29) .. controls (6.95,-1.4) and (3.31,-0.3) .. (0,0) .. controls (3.31,0.3) and (6.95,1.4) .. (10.93,3.29)   ;
%Straight Lines [id:da004841643721738675]
\draw    (472,347.2) -- (552.39,406.36) ;
\draw [shift={(554,407.55)}, rotate = 216.35] [color={rgb, 255:red, 0; green, 0; blue, 0 }  ][line width=0.75]    (10.93,-3.29) .. controls (6.95,-1.4) and (3.31,-0.3) .. (0,0) .. controls (3.31,0.3) and (6.95,1.4) .. (10.93,3.29)   ;

% Text Node
\draw (158.71,602.06) node [anchor=north west][inner sep=0.75pt]   [align=left] {};
% Text Node
\draw (312.48,690.14) node [anchor=north west][inner sep=0.75pt]    {$0$};
% Text Node
\draw (175.51,632.25) node [anchor=north west][inner sep=0.75pt]  [font=\scriptsize]  {$\textcolor[rgb]{0.82,0.01,0.11}{S}\textcolor[rgb]{0.82,0.01,0.11}{_{2}}\textcolor[rgb]{0.82,0.01,0.11}{[}\textcolor[rgb]{0.82,0.01,0.11}{-1}\textcolor[rgb]{0.82,0.01,0.11}{]}$};
% Text Node
\draw (322.56,619.31) node [anchor=north west][inner sep=0.75pt]  [font=\scriptsize]  {$\textcolor[rgb]{0.82,0.01,0.11}{S}\textcolor[rgb]{0.82,0.01,0.11}{_{1}}\textcolor[rgb]{0.82,0.01,0.11}{[}\textcolor[rgb]{0.82,0.01,0.11}{-1}\textcolor[rgb]{0.82,0.01,0.11}{]}$};
% Text Node
\draw (287.43,590.51) node [anchor=north west][inner sep=0.75pt]    {$S_{1}[ -1]$};
% Text Node
\draw (155.9,593.49) node [anchor=north west][inner sep=0.75pt]    {$S_{2}[ -1]$};
% Text Node
\draw (458.13,595.48) node [anchor=north west][inner sep=0.75pt]    {$P_{1}$};
% Text Node
\draw (148.73,494) node [anchor=north west][inner sep=0.75pt]    {$S_{2}[ -1] \ P_{1}$};
% Text Node
\draw (170.73,542.6) node [anchor=north west][inner sep=0.75pt]  [font=\scriptsize]  {$\textcolor[rgb]{0.82,0.01,0.11}{P}\textcolor[rgb]{0.82,0.01,0.11}{_{1}}$};
% Text Node
\draw (220.73,522.6) node [anchor=north west][inner sep=0.75pt]  [font=\scriptsize,color={rgb, 255:red, 208; green, 2; blue, 27 }  ,opacity=1 ]  {$S_{2}[ -1]$};
% Text Node
\draw (0,357.2) node [anchor=north west][inner sep=0.75pt]    {$S_{2}[ -1] \ S_{1}[ -1] \ I_{2}[ -1]$};
% Text Node
\draw (35,405.2) node [anchor=north west][inner sep=0.75pt]  [font=\scriptsize,color={rgb, 255:red, 208; green, 2; blue, 27 }  ,opacity=1 ]  {$S_{1}[ -1]$};
% Text Node
\draw (161,358.2) node [anchor=north west][inner sep=0.75pt]    {$S_{2}[ -1] \ P_{1} \ P_{2} \ S_{3}$};
% Text Node
\draw (174,416.2) node [anchor=north west][inner sep=0.75pt]  [font=\scriptsize]  {$\textcolor[rgb]{0.82,0.01,0.11}{P}\textcolor[rgb]{0.82,0.01,0.11}{_{2}}$};
% Text Node
\draw (196,199.6) node [anchor=north west][inner sep=0.75pt]    {$\mathcal{A}$};
% Text Node
\draw (142,238.6) node [anchor=north west][inner sep=0.75pt]  [font=\scriptsize]  {$\textcolor[rgb]{0.82,0.01,0.11}{P}\textcolor[rgb]{0.82,0.01,0.11}{_{1}}$};
% Text Node
\draw (206,252.6) node [anchor=north west][inner sep=0.75pt]  [font=\scriptsize]  {$\textcolor[rgb]{0.82,0.01,0.11}{S}\textcolor[rgb]{0.82,0.01,0.11}{_{1}}\textcolor[rgb]{0.82,0.01,0.11}{[}\textcolor[rgb]{0.82,0.01,0.11}{-1}\textcolor[rgb]{0.82,0.01,0.11}{]}$};
% Text Node
\draw (239,438.2) node [anchor=north west][inner sep=0.75pt]    {$S_{1}[ -1] \ I_{2}[ -1]$};
% Text Node
\draw (296,475.2) node [anchor=north west][inner sep=0.75pt]  [font=\scriptsize]  {$\textcolor[rgb]{0.82,0.01,0.11}{I}\textcolor[rgb]{0.82,0.01,0.11}{_{2}}\textcolor[rgb]{0.82,0.01,0.11}{[}\textcolor[rgb]{0.82,0.01,0.11}{-1}\textcolor[rgb]{0.82,0.01,0.11}{]}$};
% Text Node
\draw (95,397.2) node [anchor=north west][inner sep=0.75pt]  [font=\scriptsize]  {$\textcolor[rgb]{0.82,0.01,0.11}{S}\textcolor[rgb]{0.82,0.01,0.11}{_{2}}\textcolor[rgb]{0.82,0.01,0.11}{[}\textcolor[rgb]{0.82,0.01,0.11}{-1}\textcolor[rgb]{0.82,0.01,0.11}{]}$};
% Text Node
\draw (358,427.2) node [anchor=north west][inner sep=0.75pt]    {$S_{1}[ -1] \ P_{1} \ P_{2}$};
% Text Node
\draw (397,468.2) node [anchor=north west][inner sep=0.75pt]  [font=\scriptsize]  {$\textcolor[rgb]{0.82,0.01,0.11}{P}\textcolor[rgb]{0.82,0.01,0.11}{_{1}}$};
% Text Node
\draw (352,329.6) node [anchor=north west][inner sep=0.75pt]    {$S_{1}[ -1] \ P_{1} \ P_{2} \ S_{3}$};
% Text Node
\draw (389,362.2) node [anchor=north west][inner sep=0.75pt]  [font=\scriptsize]  {$\textcolor[rgb]{0.82,0.01,0.11}{S}\textcolor[rgb]{0.82,0.01,0.11}{_{3}}$};
% Text Node
\draw (296,243.6) node [anchor=north west][inner sep=0.75pt]    {$S_{1}[ -1] \ P_{1} \ P_{2} \ S_{3} \ I_{2}[ -1]$};
% Text Node
\draw (355,288.6) node [anchor=north west][inner sep=0.75pt]  [font=\scriptsize]  {$\textcolor[rgb]{0.82,0.01,0.11}{I}\textcolor[rgb]{0.82,0.01,0.11}{_{2}}\textcolor[rgb]{0.82,0.01,0.11}{[}\textcolor[rgb]{0.82,0.01,0.11}{-1}\textcolor[rgb]{0.82,0.01,0.11}{]}$};
% Text Node
\draw (246,222.6) node [anchor=north west][inner sep=0.75pt]  [font=\scriptsize]  {$\textcolor[rgb]{0.82,0.01,0.11}{S}\textcolor[rgb]{0.82,0.01,0.11}{_{2}}\textcolor[rgb]{0.82,0.01,0.11}{[}\textcolor[rgb]{0.82,0.01,0.11}{-1}\textcolor[rgb]{0.82,0.01,0.11}{]}$};
% Text Node
\draw (488,505.8) node [anchor=north west][inner sep=0.75pt]    {$P_{2} \ P_{1}$};
% Text Node
\draw (502.22,546.1) node [anchor=north west][inner sep=0.75pt]  [font=\scriptsize]  {$\textcolor[rgb]{0.82,0.01,0.11}{P}\textcolor[rgb]{0.82,0.01,0.11}{_{2}}$};
% Text Node
\draw (442.12,465.9) node [anchor=north west][inner sep=0.75pt]  [font=\scriptsize]  {$\textcolor[rgb]{0.82,0.01,0.11}{S}\textcolor[rgb]{0.82,0.01,0.11}{_{1}}\textcolor[rgb]{0.82,0.01,0.11}{[}\textcolor[rgb]{0.82,0.01,0.11}{-1}\textcolor[rgb]{0.82,0.01,0.11}{]}$};
% Text Node
\draw (449,630.4) node [anchor=north west][inner sep=0.75pt]  [font=\scriptsize]  {$\textcolor[rgb]{0.82,0.01,0.11}{P}\textcolor[rgb]{0.82,0.01,0.11}{_{1}}$};
% Text Node
\draw (122,765.8) node [anchor=north west][inner sep=0.75pt]    {$$};
% Text Node
\draw (54,763.4) node [anchor=north west][inner sep=0.75pt]  [font=\Huge] [align=left] {};
% Text Node
\draw (539,410.6) node [anchor=north west][inner sep=0.75pt]    {$S_{3} \ P_{2} \ P_{1}$};
% Text Node
\draw (545.83,446.62) node [anchor=north west][inner sep=0.75pt]  [font=\scriptsize,color={rgb, 255:red, 139; green, 87; blue, 42 }  ,opacity=1 ]  {$\textcolor[rgb]{0.82,0.01,0.11}{S}\textcolor[rgb]{0.82,0.01,0.11}{_{3}}$};
% Text Node
\draw (300,362.6) node [anchor=north west][inner sep=0.75pt]  [font=\scriptsize]  {$\textcolor[rgb]{0.82,0.01,0.11}{S}\textcolor[rgb]{0.82,0.01,0.11}{_{2}}\textcolor[rgb]{0.82,0.01,0.11}{[}\textcolor[rgb]{0.82,0.01,0.11}{-1}\textcolor[rgb]{0.82,0.01,0.11}{]}$};
% Text Node
\draw (513,362.6) node [anchor=north west][inner sep=0.75pt]  [font=\scriptsize]  {$\textcolor[rgb]{0.82,0.01,0.11}{S}\textcolor[rgb]{0.82,0.01,0.11}{_{1}}\textcolor[rgb]{0.82,0.01,0.11}{[}\textcolor[rgb]{0.82,0.01,0.11}{-1}\textcolor[rgb]{0.82,0.01,0.11}{]}$};
% Text Node
\draw (0,712.6) node [anchor=north west][inner sep=0.75pt]   [align=left] {where each torsion class  is represented by its non-zero indecomposable objects
(unless $0$ and $\mathcal{A}$).};
\end{tikzpicture}
 $$
\end{example}

\section{Support $\tau$-tilting subcategories in extriangulated length categories}
Our aim in this section is to establish a unified framework to study the  $\tau$-tilting theory in length categories. In this section, we always assume that $((\mathcal{A}, \mathbb{E},\mathfrak{s}),\Theta)$ is a length category.  We start with the following notations.

For a subcategory $\mathcal{T}$, we define
$$^{\perp_{\Theta}}\mathcal{T}=\{M\in\mathcal{A}~|~\mathbb{E}_{\Theta}(M,T)=0~\text{for any}~T\in \mathcal{T}\}.$$
Dually, we can define $\mathcal{T}^{\perp_{\Theta}}$. An object $P\in\mathcal{T}$ is called {\em $\Theta$-projective}  if $P\in{^{\perp_{\Theta}}}\mathcal{T}$. We denote the subcategory of $\Theta$-projective objects in $\mathcal{T}$ by $\mathcal{P}(\mathcal{T})$. Dually, the {\em $\Theta$-injective} objects are defined, and the full subcategory of $\Theta$-injective objects in $\mathcal{T}$ is denoted by $\mathcal{I}(\mathcal{T})$. We say $\mathcal{A}$ {\em has enough $\Theta$-projectives}  if for any $M\in \mathcal{A}$, there exists a $\Theta$-deflation $P\twoheadrightarrow M$ satisfying $P\in\mathcal{P}(\mathcal{A})$.  Dually, we define that $\mathscr{C}$ {\em has enough $\Theta$-injectives}.

\begin{remark} Recall that $((\mathcal{A}, \mathbb{E}_{\Theta},\mathfrak{s}_{\Theta}),\Theta)$ is a stable length category (see Proposition \ref{11-11}). It is easy to see that $P\in\mathcal{P}(\mathcal{A})$ (resp. $I\in\mathcal{I}(\mathcal{A})$) if and only if  $P$ (resp. $I$) is projective (resp. injective) in $(\mathcal{A}, \mathbb{E}_{\Theta},\mathfrak{s}_{\Theta})$.
 \end{remark}
We consider the following two conditions for a subcategory $\mathcal{T}$:
$$(\dag):\text{For any}~P\in\mathcal{P}(\mathcal{A}),~\text{there is a left}~\mathcal{T}\text{-approximation}~f:P\rightarrow T~\text{with}~T\in\mathcal{T}.$$
$$(\ddag):\text{For any}~P\in\mathcal{P}(\mathcal{A}),~\text{there is a left}~\mathcal{T}\text{-approximation}~f:P\rightarrow T~\text{with}~T\in\mathcal{P}(\mathcal{T}).$$
Then we can introduce the following two definitions.

\begin{definition}\label{d-5-2} We say that a  torsion class $\mathcal{T}$  is {\em support} if it satisfies the condition $(\ddag)$.
\end{definition}

\begin{definition}\label{d-5-11} A subcategory $\mathcal{T}$ of $\mathcal{A}$ is a {\em support $\tau$-tilting subcategory} if it satisfies:
\begin{itemize}
  \item [(1)] $\mathcal{T}=\mathcal{P}({\Fac_{\Theta}(\mathcal{T})})$,
  \item [(2)] $\mathcal{T}$ is  {\em $\tau$-rigid}, i.e. $\mathcal{T}\subseteq {^{\perp_{\Theta}}}{\Fac_{\Theta}(\mathcal{T})}$,
  \item [(3)] $\mathcal{T}$ satisfies the condition $(\dag)$.
\end{itemize}
We call an object $T\in\mathcal{A}$ is a  {\em support $\tau$-tilting object} if $\add(T)$ is a support $\tau$-tilting subcategory.
\end{definition}
 We denote by ${\rm s}\tau$-${\rm tilt(\mathcal{A}})$ (resp. ${\rm s}$-${\rm tors_{\Theta}(\mathcal{A})}$) the set of  support $\tau$-tilting subcategories (resp.  support torsion classes) in $\mathcal{A}$.

\begin{remark} When $\Theta$ is not stable, our notion of support
$\tau$-tilting subcategories differs from that in \cite[Definition 3.5]{ZW}.
Specifically, we only require that $\mathbb{E}_{\Theta}(\mathcal{T},{\Fac_{\Theta}(\mathcal{T})})=0$,
whereas \cite{ZW} imposes the stronger condition $\mathbb{E}(\mathcal{T},{{\rm Defl}(\mathcal{T})})=0$,
where ${\rm Defl}(\mathcal T)=\{F\in\A\mid \mbox{there exists a deflation}~
 U\xrightarrow{~g~}F~\mbox{for some}~U\in\mathcal T\}$.
However,  if $\Theta$ is stable, then support $\tau$-tilting subcategories defined in \cite{ZW} also qualify as support $\tau$-tilting subcategories in our framework (cf. \cite[Lemma 3.7]{ZW}).
\end{remark}

\begin{remark}\label{R-5-4} Let $\Lambda$ be a finite dimensional algebra. We set $\Theta=l_{{\rm sim({\rm mod} \Lambda)}}$.  As stated in Example \ref{E-3-10}, we regard $(\mod \Lambda,\Theta)$ as a stable length category. In this case, stable $\mathbb{E}$-triangles are precisely short exact sequences of $\mod \Lambda$.

$(1)$ Let $\mathcal{T}$ be a support torsion class. Then $\mathcal{T}$ is functorially finite by Proposition \ref{P-5-8} and \cite[Proposition 2.1]{Du}. Conversely, any functorially finite torsion class $\mathcal{F}$ is support by  \cite[Theorem 2.7]{Ad}) and \cite[Proposition 2.14]{Ja}. This shows that support torsion classes in module category are just functorially finite torsion classes.

$(2)$ Let $\mathcal{S}$ be a support $\tau$-tilting subcategory. By definition, there exists a right exact sequence
\begin{equation}\label{E-1}
\Lambda\stackrel{f}\rightarrow S_{1}\rightarrow S_{2}\rightarrow0
\end{equation}
such that $f$ is a left $\mathcal{S}$-approximation with $S_{1}\in\mathcal{S}$. We claim that $S_{2}\in\mathcal{S}$. To see this, we  take a $\Theta$-decomposition $(i_{f},\Im(f),j_{f})$ of $f$. Applying the functor $\Hom(-,\Fac_{\Theta}(\mathcal{S}))$ to the sequence (\ref{E-1}), we obtain an  exact sequence
$$\Hom(S_{1},-)|_{{\rm Fac}_{\Theta}(\mathcal{S})}\stackrel{f^{\ast}}\rightarrow \Hom(\Im(f),-)|_{{\rm Fac}_{\Theta}(\mathcal{S})}\rightarrow {\rm Ext}^{1}(S_{2},-)|_{{\rm Fac}_{\Theta}(\mathcal{S})}\stackrel{}\rightarrow{\rm Ext}^{1}(S_{1},-)|_{{\rm Fac}_{\Theta}(\mathcal{S})}.$$
Note that $f$ is  a left $\Fac_{\Theta}(\mathcal{S})$-approximation (see Lemma \ref{L-5-5}). One can check directly that $j_{f}$ is also a left $\Fac_{\Theta}(\mathcal{S})$-approximation. This implies that $f^{\ast}$ is an epimorphism. On the other hand, since $\mathcal{S}$ is $\tau$-rigid, we have  ${\rm Ext}^{1}(S_{1},-)|_{{\rm Fac}_{\Theta}(\mathcal{S})}=0$. Thus $S_{2}\in\mathcal{P}({\Fac_{\Theta}(\mathcal{S})})=\mathcal{S}$ and the claim follows. Thus  support $\tau$-tilting objects are precisely  support $\tau$-tilting $\Lambda$-modules by \cite[Theorem 2.7]{Ad} and \cite[Proposition 2.14]{Ja}.
\end{remark}
 We are ready to state the main theorem of this section.
\begin{theorem}\label{main5} Let $(\mathcal{A}, \Theta)$ be a length category with enough $\Theta$-projectives. There is a bijection
$${\rm s}\tau\text{-}{\rm tilt(\mathcal{A})}\longrightarrow{\rm s}\text{-}\tors_{\Theta}(\mathcal{A})$$
given by $\mathcal{S}\mapsto\Fac_{\Theta}(\mathcal{S})$ with the inverse $\mathcal{T}\mapsto \mathcal{P}(\mathcal{T})$.
\end{theorem}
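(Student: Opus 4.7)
The plan is to verify that both maps are well-defined and that they compose to the identity on each side. My central device is Proposition \ref{11-11}: passing from $\mathbb{E}$ to $\mathbb{E}_{\Theta}$ turns $\Theta$ into a stable length function, so ``$\Theta$-inflation/deflation/extension'' become simply ``inflation/deflation/extension'' of the extriangulated category $(\mathcal{A},\mathbb{E}_{\Theta},\mathfrak{s}_{\Theta})$. This lets me apply the standard pullback/pushout machinery of extriangulated categories (in particular the compatibility of $\mathbb{E}_{\Theta}$ with base change from Lemma \ref{L-5-1}) without tracking stability by hand.

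\textbf{Step 1.} To show $\mathcal{S}\mapsto\Fac_{\Theta}(\mathcal{S})$ lands in the set of support torsion classes, closure under $\Theta$-quotients is immediate from composability of $\Theta$-deflations. For closure under stable extensions, I take a stable triangle $M\rightarrowtail X\stackrel{h}{\twoheadrightarrow}N\stackrel{\delta}{\dashrightarrow}$ with $M,N\in\Fac_{\Theta}(\mathcal{S})$ and pick $\Theta$-deflations $s:S_{M}\twoheadrightarrow M$ and $g:S_{N}\twoheadrightarrow N$ with $S_{M},S_{N}\in\mathcal{S}$. Since $\mathcal{S}$ is $\tau$-rigid, $\mathbb{E}_{\Theta}(S_{N},M)=0$; by Lemma \ref{L-5-1}, $g^{\ast}\delta\in\mathbb{E}_{\Theta}(S_{N},M)$, so $g$ lifts to $\tilde g:S_{N}\to X$ satisfying $h\tilde g=g$. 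Combining with $S_{M}\twoheadrightarrow M\rightarrowtail X$ produces a morphism of stable triangles whose outer verticals are the $\Theta$-deflations $s$ and $g$; a standard $3\times 3$/Short-Five-Lemma argument for extriangulated categories (available since $\mathcal{A}$ is Krull--Schmidt and hence weakly idempotent complete) then forces the middle vertical $\psi:S_{M}\oplus S_{N}\to X$ to be a $\Theta$-deflation, so $X\in\Fac_{\Theta}(\mathcal{S})$. For condition $(\ddag)$, given $P\in\mathcal{P}(\mathcal{A})$, condition $(\dag)$ supplies a left $\mathcal{S}$-approximation $\phi:P\to T$ with $T\in\mathcal{S}=\mathcal{P}(\Fac_{\Theta}(\mathcal{S}))$; this $\phi$ is already a left $\Fac_{\Theta}(\mathcal{S})$-approximation, because any $f:P\to M$ with $M\in\Fac_{\Theta}(\mathcal{S})$ lifts through a chosen $\Theta$-deflation $S\twoheadrightarrow M$ by $\Theta$-projectivity of $P$, and the lift $P\to S\in\mathcal{S}$ factors through $\phi$.

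\textbf{Step 2.} To show $\mathcal{T}\mapsto\mathcal{P}(\mathcal{T})$ lands in the set of support $\tau$-tilting subcategories, I first establish the auxiliary identity $\Fac_{\Theta}(\mathcal{P}(\mathcal{T}))=\mathcal{T}$. The inclusion $\subseteq$ uses closure of $\mathcal{T}$ under $\Theta$-quotients. For $\supseteq$, pick $M\in\mathcal{T}$; enough $\Theta$-projectives supplies a $\Theta$-deflation $p:P\twoheadrightarrow M$ with $P\in\mathcal{P}(\mathcal{A})$, and $(\ddag)$ provides a left $\mathcal{T}$-approximation $q:P\to T$ with $T\in\mathcal{P}(\mathcal{T})$, so $p$ factors as $p=rq$ for some $r:T\to M$; by Lemma \ref{4-77}, $r$ is a $\Theta$-deflation, giving $M\in\Fac_{\Theta}(T)\subseteq\Fac_{\Theta}(\mathcal{P}(\mathcal{T}))$. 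Granted this identity, $\mathcal{P}(\mathcal{T})$ is $\tau$-rigid by the very definition of $\Theta$-projective, condition (1) of Definition \ref{d-5-11} becomes tautological, and the approximation $q$ from $(\ddag)$ already lands in $\mathcal{P}(\mathcal{T})\subseteq\mathcal{T}$, so it serves a fortiori as a left $\mathcal{P}(\mathcal{T})$-approximation, giving $(\dag)$.

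\textbf{Step 3.} Mutual inversion is now immediate: $\Fac_{\Theta}(\mathcal{P}(\mathcal{T}))=\mathcal{T}$ was established in Step 2, and $\mathcal{P}(\Fac_{\Theta}(\mathcal{S}))=\mathcal{S}$ is exactly condition (1) of Definition \ref{d-5-11}. The main obstacle is the extension-closedness of $\Fac_{\Theta}(\mathcal{S})$ in Step 1, specifically the Short-Five-Lemma-type deduction that $\psi$ is a $\Theta$-deflation; this is the only place where one genuinely needs the full interplay of $\tau$-rigidity, base change for $\mathbb{E}_{\Theta}$, and the extriangulated axioms. Everything else reduces, via Proposition \ref{11-11}, to arguments familiar from the classical stable/exact setting.
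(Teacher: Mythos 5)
Your overall architecture matches the paper's: Proposition~\ref{P-5-7} ($\Fac_{\Theta}$ sends $\tau$-rigid to torsion and support $\tau$-tilting to support torsion), Proposition~\ref{P-5-8} ($\mathcal{P}$ sends support torsion to support $\tau$-tilting, with $\Fac_{\Theta}(\mathcal{P}(\mathcal{T}))=\mathcal{T}$), and the one-line mutual-inversion check at the end. Your Steps~2 and~3 are effectively verbatim the paper's Proposition~\ref{P-5-8} and the closing paragraph, and your handling of $(\ddag)$ from $(\dag)$ in Step~1 is exactly Lemma~\ref{L-5-5}.

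The one place you genuinely diverge is the extension-closedness of $\Fac_{\Theta}(\mathcal{S})$ in Step~1, and here you should be more careful. You construct the map $\psi=(ms,\tilde g):S_M\oplus S_N\to X$ and then appeal to ``a standard $3\times 3$/Short-Five-Lemma argument'' to conclude $\psi$ is a $\Theta$-deflation; but the short five lemma for deflations is not an off-the-shelf axiom of extriangulated categories, and you give no citation. The paper's Proposition~\ref{P-5-7}(1) avoids this entirely by a single pullback: pull $X\twoheadrightarrow X_2$ back along the $\Theta$-deflation $M\twoheadrightarrow X_2$ with $M\in\mathcal{T}$ (a base-change, legal by Lemma~\ref{L-5-1}), note the resulting conflation $X_1\rightarrowtail L\twoheadrightarrow M$ has extension class in $\mathbb{E}_{\Theta}(M,X_1)=0$ by $\tau$-rigidity so $L\cong X_1\oplus M\in\Fac_{\Theta}(\mathcal{T})$, and finish by the transitivity of $\Theta$-deflations (Lemma~\ref{L-1}). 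Your lift $\tilde g$ is exactly a splitting of this pullback triangle, and your $\psi$ factors as $(\text{deflation }Y\twoheadrightarrow X)\circ(s\oplus\mathrm{id})$, a composition of $\Theta$-deflations; phrased this way there is no need to invoke any five-lemma at all. I recommend replacing the five-lemma sentence with the pullback-and-split argument: it is strictly more elementary, uses only Lemma~\ref{L-5-1}, Lemma~\ref{L-1} and $\tau$-rigidity, and is the mechanism that would in any case underlie the five lemma in this setting.
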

Before proving Theorem \ref{main5}, we need some preparations.

\begin{lemma}\label{L-5-5} Let $\mathcal{T}$ be a $\tau$-rigid  subcategory of $\mathcal{A}$. If $f:P\rightarrow T$ is a left~$\mathcal{T}$-approximation for some $P\in\mathcal{P}(\mathcal{A})$, then $f$ is a left ~$\Fac_{\Theta}(\mathcal{T})$-approximation.
\end{lemma}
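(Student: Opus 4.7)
The plan is to exhibit the required factorisation directly from the definitions; the hypothesis that $\mathcal{T}$ is $\tau$-rigid is not actually used, but the key ingredients are $\Theta$-projectivity of $P$ and the defining property of $\Fac_{\Theta}(\mathcal{T})$. Concretely, I will show that any morphism $g:P\to F$ with $F\in\Fac_{\Theta}(\mathcal{T})$ lifts along the $\Theta$-deflation presenting $F$, and then factor this lift through $f$ using the left $\mathcal{T}$-approximation property.

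First, fix $g:P\to F$ with $F\in\Fac_{\Theta}(\mathcal{T})$. By the definition of $\Fac_{\Theta}$, there is an object $T'\in\mathcal{T}$ together with a $\Theta$-deflation $\pi:T'\twoheadrightarrow F$, which sits in a stable $\mathbb{E}$-triangle
$$K\rightarrowtail T'\stackrel{\pi}{\twoheadrightarrow} F\dashrightarrow.$$
By Proposition \ref{11-11}, this is an $\mathbb{E}_{\Theta}$-triangle in the induced extriangulated category $(\mathcal{A},\mathbb{E}_{\Theta},\mathfrak{s}_{\Theta})$, so the standard long exact sequence obtained by applying $\Hom(P,-)$ furnishes an exact sequence
$$\Hom(P,T')\xrightarrow{\pi_{\ast}}\Hom(P,F)\longrightarrow\mathbb{E}_{\Theta}(P,K).$$

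Since $P\in\mathcal{P}(\mathcal{A})$, by definition $\mathbb{E}_{\Theta}(P,K)=0$. Hence $\pi_{\ast}$ is surjective, and we may lift $g$ to a morphism $g':P\to T'$ satisfying $\pi g'=g$. Because $T'\in\mathcal{T}$ and $f:P\to T$ is a left $\mathcal{T}$-approximation, there exists $h':T\to T'$ with $g'=h'f$. Setting $h:=\pi h':T\to F$, one has $g=\pi g'=\pi h' f=hf$, establishing that $f$ is a left $\Fac_{\Theta}(\mathcal{T})$-approximation.

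The only step that might deserve care is the transition to the long exact sequence, which requires noting that stable $\mathbb{E}$-triangles are precisely the $\mathbb{E}_{\Theta}$-triangles (via Proposition \ref{11-11}) and then invoking the usual Hom long exact sequence for extriangulated categories from \cite{Na}. Once this identification is in place, the rest is a one-line diagram chase, so no serious obstacle is anticipated.
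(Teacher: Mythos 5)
Your proof is correct and takes essentially the same route as the paper: lift $g$ along the $\Theta$-deflation $T'\twoheadrightarrow F$ using $\Theta$-projectivity of $P$, then factor the lift through $f$ by the left $\mathcal{T}$-approximation property. You make the lifting step explicit via the $\Hom(P,-)$ long exact sequence for the $\mathbb{E}_{\Theta}$-triangle, which the paper leaves implicit, and your observation that $\tau$-rigidity is not actually needed is also consistent with the paper's argument.
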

\begin{proof} Clearly, $T\in\Fac_{\Theta}(\mathcal{T})$. Suppose that $g:P\rightarrow X$ is a morphism in $\mathcal{A}$ such that $X\in\Fac_{\Theta}(\mathcal{T})$. By hypothesis, there exists a $\Theta$-deflation $h:T'\twoheadrightarrow X$ for some $T'\in\mathcal{T}$. Since $P\in\mathcal{P}(\mathcal{A})$, there exists a morphism $s:P\rightarrow T'$ such that $g=hs$. Since $f$ is a left $\mathcal{T}$-approximation, there exists a morphism $t:T\rightarrow T'$ such that $s=tf$. In particular, we have $g=htf$. This shows $f$ is a left $\Fac_{\Theta}(\mathcal{T})$-approximation.
\end{proof}

\begin{proposition}\label{P-5-7} $(1)$ Let $\mathcal{T}$ be a $\tau$-rigid  subcategory of $\mathcal{A}$. Then $\Fac_{\Theta}(\mathcal{T})\in\tors_{\Theta}(\mathcal{A})$.

$(2)$  Let $\mathcal{T}$ be a support $\tau$-tilting subcategory of $\mathcal{A}$. Then $\Fac_{\Theta}(\mathcal{T})\in{\rm s}\text{-}\tors_{\Theta}(\mathcal{A})$.
\end{proposition}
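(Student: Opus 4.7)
The plan is to verify the two defining properties of a torsion class for $\Fac_{\Theta}(\mathcal{T})$, working within the stable extriangulated structure $(\mathcal{A},\mathbb{E}_{\Theta},\mathfrak{s}_{\Theta})$ of Proposition \ref{11-11}. Closure under $\Theta$-quotients, i.e.\ $\Fac_{\Theta}(\Fac_{\Theta}(\mathcal{T}))\subseteq \Fac_{\Theta}(\mathcal{T})$, is immediate: given a $\Theta$-deflation $T\twoheadrightarrow M$ with $T\in\mathcal{T}$ and a further $\Theta$-deflation $M\twoheadrightarrow N$, their composite is again a $\Theta$-deflation by composability of deflations in the extriangulated category $(\mathcal{A},\mathbb{E}_{\Theta},\mathfrak{s}_{\Theta})$, so $N\in\Fac_{\Theta}(\mathcal{T})$.

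The substantive step is to verify $\Fac_{\Theta}(\mathcal{T})\ast_{\Theta} \Fac_{\Theta}(\mathcal{T})\subseteq \Fac_{\Theta}(\mathcal{T})$, and this is where $\tau$-rigidity enters crucially. Given a stable $\mathbb{E}$-triangle $X\rightarrowtail M\twoheadrightarrow Y\stackrel{\delta}\dashrightarrow$ with $X,Y\in\Fac_{\Theta}(\mathcal{T})$, I would choose $\Theta$-deflations $p_{X}\colon T_{X}\twoheadrightarrow X$ and $p_{Y}\colon T_{Y}\twoheadrightarrow Y$ with $T_{X},T_{Y}\in\mathcal{T}$, and form the pullback of $\delta$ along $p_{Y}$; by Lemma \ref{L-5-1} the pulled-back extension $p_{Y}^{\ast}\delta$ still lies in $\mathbb{E}_{\Theta}$, yielding a morphism of stable triangles
$$
\xymatrix{
X\ar@{>->}[r]\ar@{=}[d] & M'\ar@{->>}[r]\ar[d] & T_{Y}\ar@{->>}[d]^-{p_{Y}}\ar@{-->}[r]^-{p_{Y}^{\ast}\delta} & \\
X\ar@{>->}[r] & M\ar@{->>}[r] & Y\ar@{-->}[r]^-{\delta} &
}
$$
The $\tau$-rigidity of $\mathcal{T}$ forces $\mathbb{E}_{\Theta}(T_{Y},X)=0$, so $p_{Y}^{\ast}\delta=0$ and the top row splits to give $M'\cong X\oplus T_{Y}$ together with a $\Theta$-deflation $M'\twoheadrightarrow M$. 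Pre-composing with the $\Theta$-deflation $p_{X}\oplus \mathrm{id}_{T_{Y}}\colon T_{X}\oplus T_{Y}\twoheadrightarrow X\oplus T_{Y}\cong M'$ then exhibits $M$ as a $\Theta$-quotient of $T_{X}\oplus T_{Y}\in\mathcal{T}$, finishing (1).

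For part (2), essentially no new work is needed. Given $P\in\mathcal{P}(\mathcal{A})$, condition $(\dag)$ in Definition \ref{d-5-11} supplies a left $\mathcal{T}$-approximation $f\colon P\to T$ with $T\in\mathcal{T}$; Lemma \ref{L-5-5} upgrades $f$ to a left $\Fac_{\Theta}(\mathcal{T})$-approximation, while the identity $\mathcal{T}=\mathcal{P}(\Fac_{\Theta}(\mathcal{T}))$ from Definition \ref{d-5-11}(1) guarantees $T\in\mathcal{P}(\Fac_{\Theta}(\mathcal{T}))$, which is precisely condition $(\ddag)$. The main obstacle throughout is the splitting step in (1): one must be careful that the pulled-back extension genuinely lies in $\mathbb{E}_{\Theta}$ (so that the vanishing from $\tau$-rigidity applies), and that the resulting composite is a $\Theta$-deflation in the sense of Definition \ref{Length}; both are handled by Proposition \ref{11-11} together with Lemma \ref{L-5-1}.
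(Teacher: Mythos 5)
Your proposal is correct and follows essentially the same route as the paper: given a stable conflation with both ends in $\Fac_{\Theta}(\mathcal{T})$, choose a $\Theta$-deflation from an object of $\mathcal{T}$ onto the right-hand end, pull back, use $\tau$-rigidity (via Lemma \ref{L-5-1}) to conclude the pulled-back extension vanishes, hence the middle term of the pullback is a direct sum, and then pre-compose to exhibit the original middle term as a $\Theta$-quotient of an object of $\mathcal{T}$. The only cosmetic difference is in how you justify that the vertical map $M'\twoheadrightarrow M$ in the pullback square is a $\Theta$-deflation: you appeal to the stable extriangulated structure $(\mathcal{A},\mathbb{E}_{\Theta},\mathfrak{s}_{\Theta})$ from Proposition \ref{11-11}, whereas the paper verifies the same stability by applying Lemma \ref{4-77} to the relevant composite of inflations in the $3\times 3$ diagram. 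Both arguments are sound; yours outsources the bookkeeping to Proposition \ref{11-11}, while the paper's is more self-contained. Your treatment of part (2) coincides with the paper's.
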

\begin{proof}  (1) Let $X_{1}\stackrel{}\rightarrowtail X\twoheadrightarrow X_{2}\stackrel{}\dashrightarrow$
be a stable $\mathbb{E}$-triangle with $X_{1},X_{2}\in\Fac_{\Theta}(\mathcal{T})$.  It suffices to prove $X\subseteq\Fac_{\Theta}(\mathcal{T})$. Since $X_{2}\in\Fac_{\Theta}(\mathcal{T})$, there exists a stable $\mathbb{E}$-triangle $K\stackrel{}\rightarrowtail M\twoheadrightarrow X_{2}\stackrel{}\dashrightarrow$ with $M\in\mathcal{T}$. Consider the following commutative diagram
$$\xymatrix{
   &  K\ar[d]_-{a} \ar@{=}[r]^{} &  K \vphantom{\big|} \ar@{>->}[d]^-{c} & \\
   X_{1} \ar@{=}[d]_{} \ar[r]^{} &L  \ar[d]_-{b} \ar[r]^{} & M\ar@{->>}[d]^{}  \ar@{-->}[r]^{\delta} &  \\
   X_{1}\ar@{>->}[r]^{} & X \ar@{->>}[r]^{} \ar@{-->}[d]^{}  & X_{2} \ar@{-->}[d]^{}  \ar@{-->}[r]^{} &.  \\
    &   & & }
$$
Since $c$ is a $\Theta$-inflation, so is $a$ by Lemma \ref{4-77}. Thus $b$ is a $\Theta$-deflation. By Lemma \ref{L-5-1}, $\delta\in\mathbb{E}_{\Theta}(M,X_{1})=0$ and then $L\cong X_{1}\oplus  M\in\Fac(\mathcal{T})$. Thus $X\in\Fac_{\Theta}(\mathcal{T})$ and the assertion  follows.

$(2)$ We  note that  Lemma \ref{L-5-5} implies that $\Fac_{\Theta}(\mathcal{T})$ satisfies the condition $(\ddag)$. Therefore, using (1), we deduce that  $\Fac_{\Theta}(\mathcal{T})\in{\rm s}\text{-}\tors_{\Theta}~\mathcal{A}$.
\end{proof}

\begin{proposition}\label{P-5-8} Suppose that  $(\mathcal{A}, \Theta)$ has enough $\Theta$-projectives. If $\mathcal{T}\in{\rm s}\text{-}\tors_{\Theta}(\mathcal{A})$, then  $\mathcal{T}=\Fac_{\Theta}(\mathcal{P}(\mathcal{T}))$. In particular, we have $\mathcal{P}(\mathcal{T})\in{\rm s}\tau$-${\rm tilt(\mathcal{A})}$.
\end{proposition}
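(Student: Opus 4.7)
The plan is to prove the two assertions in order: first the identity $\mathcal{T}=\Fac_{\Theta}(\mathcal{P}(\mathcal{T}))$, and then deduce that $\mathcal{P}(\mathcal{T})$ satisfies the three axioms in Definition \ref{d-5-11}. The inclusion $\Fac_{\Theta}(\mathcal{P}(\mathcal{T}))\subseteq\mathcal{T}$ is immediate: $\mathcal{P}(\mathcal{T})\subseteq\mathcal{T}$ and $\mathcal{T}$ is closed under $\Theta$-quotients because it is a torsion class.

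For the reverse inclusion, I would pick any $M\in\mathcal{T}$. Since $(\mathcal{A},\Theta)$ has enough $\Theta$-projectives, there is a $\Theta$-deflation $p\colon P\twoheadrightarrow M$ with $P\in\mathcal{P}(\mathcal{A})$. By the support condition $(\ddag)$, there exists a left $\mathcal{T}$-approximation $f\colon P\to T$ with $T\in\mathcal{P}(\mathcal{T})$. Since $M\in\mathcal{T}$, the approximation property produces a morphism $g\colon T\to M$ with $p=gf$. The composite $gf=p$ is a $\Theta$-deflation, so by Lemma \ref{4-77} the morphism $g$ is itself a $\Theta$-deflation, which shows $M\in\Fac_{\Theta}(T)\subseteq\Fac_{\Theta}(\mathcal{P}(\mathcal{T}))$.

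With $\mathcal{T}=\Fac_{\Theta}(\mathcal{P}(\mathcal{T}))$ in hand, the three conditions defining a support $\tau$-tilting subcategory become nearly tautological. Condition (1) $\mathcal{P}(\mathcal{T})=\mathcal{P}(\Fac_{\Theta}(\mathcal{P}(\mathcal{T})))$ is just $\mathcal{P}(\mathcal{T})=\mathcal{P}(\mathcal{T})$. Condition (2), the $\tau$-rigidity $\mathcal{P}(\mathcal{T})\subseteq{^{\perp_{\Theta}}}\Fac_{\Theta}(\mathcal{P}(\mathcal{T}))={^{\perp_{\Theta}}}\mathcal{T}$, is precisely the defining property of $\Theta$-projective objects in $\mathcal{T}$. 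For condition (3), given $P\in\mathcal{P}(\mathcal{A})$, the morphism $f\colon P\to T$ supplied by $(\ddag)$ has codomain in $\mathcal{P}(\mathcal{T})$; any morphism $P\to T'$ with $T'\in\mathcal{P}(\mathcal{T})\subseteq\mathcal{T}$ factors through $f$ by the left $\mathcal{T}$-approximation property, so $f$ is automatically a left $\mathcal{P}(\mathcal{T})$-approximation.

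The only step requiring genuine content is the factorization argument used to produce $M\in\Fac_{\Theta}(T)$, whose delicate point is invoking Lemma \ref{4-77} to promote the abstract morphism $g$ to a $\Theta$-deflation; the rest is bookkeeping from the definitions. I anticipate no substantive obstacle, since all heavy lifting (closure of $\Fac_{\Theta}$ under extensions, behavior of $\Theta$-deflations under composition, additivity of $\mathbb{E}_{\Theta}$) has already been established earlier in the paper.
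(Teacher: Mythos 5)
Your proof is correct and follows essentially the same approach as the paper: both use enough $\Theta$-projectives to get a deflation from a projective, the support condition $(\ddag)$ to get a left $\mathcal{T}$-approximation landing in $\mathcal{P}(\mathcal{T})$, the factorization through that approximation, and Lemma \ref{4-77} to promote the induced map to a $\Theta$-deflation. The paper leaves the verification of the three axioms of Definition \ref{d-5-11} to the reader, whereas you spell it out explicitly; your verification is correct.
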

\begin{proof}  The inclusion $\Fac_{\Theta}(\mathcal{P}(\mathcal{T}))\subseteq\mathcal{T}$ is obvious.  For each  $T\in \mathcal{T}$, there is a $\Theta$-deflation $f:P\twoheadrightarrow T$ with $P\in\mathcal{P}(\mathcal{A})$. Since $\mathcal{T}$ is support, there exists a left $\mathcal{T}$-approximation $g:P\rightarrow T'$ for some $T'\in\mathcal{P}(\mathcal{T})$. Therefore, there exists a morphism $h:T'\rightarrow T$ such that $f=hg$. By  Lemma \ref{4-77}, the morphism $h$ is a $\Theta$-deflation and hence $T\in\Fac_{\Theta}(\mathcal{P}(\mathcal{T}))$. Then one can check that $\mathcal{P}(\mathcal{T})\in{\rm s}\tau$-${\rm tilt~\mathcal{A}}$.
\end{proof}
Now we are ready to prove Theorem \ref{main5}.

\textbf{{Proof of Theorem \ref{main5}.}} For a support $\tau$-tilting subcategory $\mathcal{S}$ of $\mathcal{A}$, we have $\Fac_{\Theta}(\mathcal{S})\in{\rm s}\text{-}\tors_{\Theta}(\mathcal{A})$ by Proposition \ref{P-5-7}. Note that we have $\mathcal{S}=\mathcal{P}({\Fac_{\Theta}(\mathcal{S})})$. Conversely, for a given  support torsion class $\mathcal{T}$, we have $\mathcal{P}(\mathcal{T})\in{\rm s}\tau$-${\rm tilt(\mathcal{A})}$ and $\mathcal{T}=\Fac_{\Theta}(\mathcal{P}(\mathcal{T}))$ by Proposition \ref{P-5-8}.
\fin\\

 Let $\Lambda$ be a finite dimensional algebra. For a subcategory $\mathcal{T}$ of $\mod \Lambda$, we  denote by $\mathrm{P}(\mathcal{T})$  the direct sum of one copy of each of the indecomposable Ext-projective objects in $\mathcal{T}$ up to isomorphism. We clearly have $\mathcal{P}(\mathcal{T})=\add(\mathrm{P}(\mathcal{T}))$. As a special case of Theorem \ref{main5}, we recover the following well-known fact in classical $\tau$-tilting theory.

\begin{corollary} \text{\rm (\cite[Theorem 2.7]{Ad})} Let $\Lambda$ be a finite dimensional $k$-algebra over an algebraically closed field $k$. There is a bijection
$${\rm s}\tau\text{-}{\rm tilt~\Lambda}\longrightarrow{\rm f}\text{-}\tors~\Lambda$$
given by $M\mapsto\Fac(M)$ with the inverse $\mathcal{T}\mapsto \mathrm{P}(\mathcal{T})$.
\end{corollary}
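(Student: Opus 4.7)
The plan is to deduce this corollary directly from Theorem \ref{main5} by specializing the extriangulated length category $(\mathcal{A},\Theta)$ to $(\mod \Lambda, l_{{\rm sim({\rm mod} \Lambda)}})$ and invoking the dictionary established in Remark \ref{R-5-4}. First I would recall from Example \ref{E-3-10} that $(\mod \Lambda, \Theta)$ with $\Theta=l_{{\rm sim({\rm mod} \Lambda)}}$ is a stable length category in which the stable $\mathbb{E}$-triangles are exactly the short exact sequences. Since every finitely generated $\Lambda$-module admits a projective cover, $\mod\Lambda$ has enough $\Theta$-projectives in the sense of the preceding section, and $\mathcal{P}(\mod\Lambda)$ coincides with the subcategory of finitely generated projective $\Lambda$-modules. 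Thus the hypothesis of Theorem \ref{main5} is satisfied.

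Next I would translate both sides of the desired bijection into the form handled by Theorem \ref{main5}. Using Remark \ref{R-5-4}(1), the set ${\rm s}\text{-}\tors_{\Theta}(\mod \Lambda)$ of support torsion classes coincides with ${\rm f}\text{-}{\rm tors}~\Lambda$, the set of functorially finite torsion classes of $\mod\Lambda$ (the Krull-Schmidt property of $\tau$-rigid subcategories combined with $\Lambda$ being finite-dimensional reduces support torsion classes to those covered by a finitely generated left approximation of $\Lambda$, which is precisely functorial finiteness). For the $\tau$-tilting side, Remark \ref{R-5-4}(2) identifies support $\tau$-tilting objects in our sense with classical support $\tau$-tilting $\Lambda$-modules in the sense of \cite{Ad}. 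Since every support $\tau$-tilting subcategory of $\mod\Lambda$ is of the form $\add(M)$ for a basic support $\tau$-tilting module $M$ (because $\mathcal{P}(\Fac(M))$ consists of finitely many indecomposables, as $\Lambda$ has finitely many isomorphism classes of projectives), passing from $\mathcal{S}$ to the direct sum of one copy of each of its indecomposables gives a bijection ${\rm s}\tau\text{-}{\rm tilt}(\mod \Lambda) \leftrightarrow {\rm s}\tau\text{-}{\rm tilt}~\Lambda$.

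Finally I would check that Theorem \ref{main5}'s bijection specializes to the announced one. On one hand, $\Fac_{\Theta}(\mathcal{S}) = \Fac(M)$ when $\mathcal{S}=\add(M)$, because $\Theta$-deflations in our setting are just epimorphisms. On the other hand, $\mathcal{P}(\mathcal{T})=\add(\mathrm{P}(\mathcal{T}))$ by the very definition of $\mathrm{P}(\mathcal{T})$, so under the above identification the inverse map $\mathcal{T}\mapsto \mathcal{P}(\mathcal{T})$ becomes $\mathcal{T}\mapsto \mathrm{P}(\mathcal{T})$, as required.

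The main step that requires care is the translation in Remark \ref{R-5-4}: one must verify that support $\tau$-tilting subcategories in our generalized extriangulated sense agree on the nose with the classical notion, which hinges on the right exact sequence $\Lambda \xrightarrow{f} S_1 \to S_2 \to 0$ being a left $\Fac_{\Theta}(\mathcal{S})$-approximation and on the ${\rm Ext}^1$-vanishing argument given there. Once these identifications are in place, the corollary is an immediate specialization of Theorem \ref{main5} and no further argument is needed.
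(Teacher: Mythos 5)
Your proposal is correct and follows exactly the route the paper takes: the paper's proof of this corollary is a one-line specialization of Theorem \ref{main5} to $(\mod\Lambda, l_{{\rm sim}(\mod\Lambda)})$ using the dictionary of Remark \ref{R-5-4}, and your more detailed unpacking (enough $\Theta$-projectives from projective covers, $\Theta$-deflations being epimorphisms, $\mathcal{P}(\mathcal{T})=\add(\mathrm{P}(\mathcal{T}))$, and the identification of support torsion classes with functorially finite ones and of support $\tau$-tilting subcategories with classical support $\tau$-tilting modules) is precisely the content that Remark \ref{R-5-4} supplies.
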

\begin{proof}
 The assertion immediately follows from  Remark \ref{R-5-4} and Theorem \ref{main5}.
\end{proof}

For $\mathcal{P},\mathcal{Q}\in{\rm s}\tau\text{-}{\rm tilt~\mathcal{A}}$, we write $\mathcal{P}\leq\mathcal{Q}$ if $\Fac_{\Theta}(\mathcal{P})\subseteq\Fac_{\Theta}(\mathcal{Q})$. Then we have a partial order on ${\rm s}\tau\text{-}{\rm tilt~(\mathcal{A})}$.  Then the Hasse quiver of {\rm s}$\tau\text{-}{\rm tilt(\mathcal{A})}$ and its brick labelling have the following description.

\begin{corollary} We have that {\rm Hasse}$({\rm s}\tau\text{-}{\rm tilt(\mathcal{A})})\cong{\rm Hasse}({\rm s}\text{-}\tors_{\Theta}(\mathcal{A}))$ is a full subquiver of ${\rm Hasse}(\tors_{\Theta}(\mathcal{A}))$. In this case, the brick label of $\mathcal{P}\rightarrow\mathcal{Q}$ in ${\rm s}\tau\text{-}{\rm tilt~\mathcal{A}}$ is the same as the brick label of  $\Fac(\mathcal{P})\rightarrow\Fac(\mathcal{Q})$ in ${\rm Hasse}({\rm s}\text{-}\tors_{\Theta}(\mathcal{A}))$.
\end{corollary}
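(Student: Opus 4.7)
The first assertion, that $\mathcal{S}\mapsto\Fac_{\Theta}(\mathcal{S})$ induces an isomorphism ${\rm Hasse}({\rm s}\tau\text{-}{\rm tilt}(\mathcal{A}))\cong{\rm Hasse}({\rm s}\text{-}\tors_{\Theta}(\mathcal{A}))$, is essentially formal. By the partial order on ${\rm s}\tau\text{-}{\rm tilt}(\mathcal{A})$ stated just before the corollary, $\mathcal{P}\leq\mathcal{Q}$ iff $\Fac_{\Theta}(\mathcal{P})\subseteq\Fac_{\Theta}(\mathcal{Q})$, so the bijection of Theorem \ref{main5} is tautologically an isomorphism of posets. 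Any poset isomorphism carries covering relations to covering relations, and the induced quiver map identifies the two Hasse quivers.

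Next I would establish that ${\rm Hasse}({\rm s}\text{-}\tors_{\Theta}(\mathcal{A}))$ is a full subquiver of ${\rm Hasse}(\tors_{\Theta}(\mathcal{A}))$, which requires checking both directions of the arrow correspondence. One direction is trivial: if $\mathcal{P}\to\mathcal{Q}$ is a cover in ${\rm Hasse}(\tors_{\Theta}(\mathcal{A}))$ whose endpoints are both support, then nothing in $\tors_{\Theta}(\mathcal{A})$ lies strictly between, hence nothing support does either. The converse is the substantive content: assuming $\mathcal{P}$ covers $\mathcal{Q}$ in ${\rm s}\text{-}\tors_{\Theta}(\mathcal{A})$, I must rule out any intermediate (possibly non-support) torsion class. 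Suppose for contradiction $\mathcal{Q}\subsetneq\mathcal{V}\subsetneq\mathcal{P}$ with $\mathcal{V}\in\tors_{\Theta}(\mathcal{A})$. By Theorem \ref{main4}(3) applied to the interval $[\mathcal{Q},\mathcal{P}]$ there is a brick $S$ of minimal $\Theta$-length in ${\rm brick}(\mathcal{Q}^{\perp}\cap\mathcal{P})$; set $\mathcal{W}:=\mathrm{T}_{\Theta}(\mathcal{Q}\cup S)$. Using Lemma \ref{L-4-18} and Proposition \ref{P-2-6} I would show $\mathcal{Q}\subsetneq\mathcal{W}\subseteq\mathcal{P}$, and then exploit the approximation axiom $(\ddag)$ for $\mathcal{P}$ together with Proposition \ref{P-5-7}(2) to verify that $\mathcal{W}$ itself is support, thereby producing a support torsion class strictly inside the supposed cover.

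The main obstacle lies precisely in verifying that $\mathcal{W}=\mathrm{T}_{\Theta}(\mathcal{Q}\cup S)$ is support: condition $(\ddag)$ controls $\mathcal{P}$-approximations of $\Theta$-projectives of $\mathcal{A}$, and these must be refined to $\mathcal{W}$-approximations landing in $\mathcal{P}(\mathcal{W})$. The minimality of $S$ from Theorem \ref{main4}(3), together with the $\tau$-rigidity of the support $\tau$-tilting preimage of $\mathcal{P}$, should guide this construction. Once the full subquiver claim is secured, the statement on brick labels is a bookkeeping matter: every arrow $\Fac_{\Theta}(\mathcal{P})\to\Fac_{\Theta}(\mathcal{Q})$ of ${\rm Hasse}({\rm s}\text{-}\tors_{\Theta}(\mathcal{A}))$ is also an arrow of ${\rm Hasse}(\tors_{\Theta}(\mathcal{A}))$ and hence carries the canonical brick label of Definition \ref{brick}; transporting this label through the poset isomorphism of the first step yields the brick label on ${\rm Hasse}({\rm s}\tau\text{-}{\rm tilt}(\mathcal{A}))$, and the claimed equality holds by construction.
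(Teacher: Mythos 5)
Your first step --- that the bijection of Theorem~\ref{main5}, together with the definition of the partial order on ${\rm s}\tau\text{-}{\rm tilt}(\mathcal{A})$, is tautologically a poset isomorphism and hence induces an isomorphism of Hasse quivers --- is exactly what the paper's one-line proof appeals to. You then correctly observe that the phrase ``is a full subquiver of ${\rm Hasse}(\tors_{\Theta}(\mathcal{A}))$'' packs in a further claim: every cover in ${\rm s}\text{-}\tors_{\Theta}(\mathcal{A})$ must already be a cover in $\tors_{\Theta}(\mathcal{A})$ (the opposite inclusion of arrow sets is, as you note, trivial). This claim is also needed for the last sentence of the corollary to make sense, since Definition~\ref{brick} only assigns brick labels to arrows of ${\rm Hasse}(\tors_{\Theta}(\mathcal{A}))$.

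Your sketch of that non-trivial direction, however, has concrete gaps, the most serious of which you flag yourself. First, Theorem~\ref{main4}(3) is conditional on $\#[\mathcal{Q},\mathcal{P}]=2$, which is either what you are trying to establish or whose negation you are assuming for contradiction; you cannot invoke it to produce $S$, although a brick of minimal $\Theta$-length exists by elementary well-ordering. Second, setting $\mathcal{W}=\mathrm{T}_{\Theta}(\mathcal{Q}\cup S)$ gives $\mathcal{Q}\subsetneq\mathcal{W}\subseteq\mathcal{P}$, but nothing in your argument precludes $\mathcal{W}=\mathcal{P}$, so the construction does not yet yield an intermediate torsion class, let alone a support one. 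Third --- and this is the point you rightly call the main obstacle --- refining the $(\ddag)$-approximations of $\Theta$-projectives into approximations landing in $\mathcal{P}(\mathcal{W})$ is precisely the technical core, and neither the paper (whose proof is purely a citation of Theorem~\ref{main5}) nor your sketch supplies an argument. For module categories over finite-dimensional algebras the corresponding step is not formal; it goes through the mutation theory of support $\tau$-tilting modules, a tool the paper does not develop in the extriangulated setting. Your caution is therefore warranted: the corollary contains content beyond what ``follows immediately from Theorem~\ref{main5}.''
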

\begin{proof} This follows immediately from Theorem \ref{main5}.
\end{proof}
Dually, we can define support torsion-free classes and support $\tau^{-1}$-tilting subcategories as follows.

\begin{definition} We say that a  torsion-free class $\mathcal{F}$  is {\em support} if for any $I\in\mathcal{I}(\mathcal{A})$, there exists a right $\mathcal{F}$-approximation $F\rightarrow I$ with $F\in\Sub_{\Theta}(\mathcal{F})$.
\end{definition}

\begin{definition}\label{d-5-1} A subcategory $\mathcal{F}$ of $\mathcal{A}$ is a {\em support $\tau^{-1}$-tilting subcategory} if it satisfies:
\begin{itemize}
  \item [(1)] $\mathcal{F}=\mathcal{I}({\Sub_{\Theta}(\mathcal{F})})$,
  \item [(2)]  $\mathcal{T}$ is  {\em $\tau^{-1}$-rigid}, i.e. $\mathcal{T}\subseteq {\Sub(\mathcal{T})}{^{\perp_{\Theta}}}$.
  \item [(3)] For each $I\in\mathcal{I}(\mathcal{A})$, there exists a right $\mathcal{F}$-approximation $F\rightarrow I$ with $F\in\mathcal{F}$.
    \end{itemize}
We call an object $T\in\mathcal{A}$ is a {\em support $\tau^{-1}$-tilting object} if $\add(T)$ is a support $\tau^{-1}$-tilting subcategory.
\end{definition}

We denote by ${\rm s}\tau^{-1}$-${\rm tilt(\mathcal{A})}$ (resp. ${\rm s}$-${\rm torf_{\Theta}(\mathcal{A})}$) the set of support $\tau^{-1}$-tilting subcategories (resp. support torsion-free classes) in $\mathcal{A}$. We obtain the dual version of Theorem \ref{main5}.

\begin{theorem}\label{main6} Let $(\mathcal{A}, \Theta)$ be a length category with enough $\Theta$-injective objects. There is a bijection
$${\rm s}\tau^{-1}\text{-}{\rm tilt(\mathcal{A})}\longrightarrow{\rm s}\text{-}\torf_{\Theta}(\mathcal{A})$$
given by $\mathcal{S}\mapsto\Sub_{\Theta}(\mathcal{S})$ with the inverse $\mathcal{F}\mapsto \mathcal{I}(\mathcal{F})$.
\end{theorem}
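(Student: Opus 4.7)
The plan is to dualize the proof of Theorem \ref{main5} throughout. The stable substructure $(\mathcal{A},\mathbb{E}_{\Theta},\mathfrak{s}_{\Theta})$ furnished by Proposition \ref{11-11} treats $\Theta$-inflations and $\Theta$-deflations symmetrically, and $\Theta$-projectives and $\Theta$-injectives are genuinely opposite notions inside it; hence every ingredient needed to establish Theorem \ref{main5} has a direct opposite-direction counterpart.

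The first step is the dual of Lemma \ref{L-5-5}: for a $\tau^{-1}$-rigid subcategory $\mathcal{F}$, any right $\mathcal{F}$-approximation $g:F\rightarrow I$ with $I\in\mathcal{I}(\mathcal{A})$ is automatically a right $\Sub_{\Theta}(\mathcal{F})$-approximation. Given $h:X\rightarrow I$ with $X\in\Sub_{\Theta}(\mathcal{F})$, choose a $\Theta$-inflation $X\rightarrowtail F'$ with $F'\in\mathcal{F}$; extend $h$ along this inflation using the $\Theta$-injectivity of $I$, then factor the extended morphism $F'\rightarrow I$ through $g$ by the approximation property. Combined with the second step below, this shows that $\Sub_{\Theta}(\mathcal{F})$ satisfies the support condition whenever $\mathcal{F}$ is support $\tau^{-1}$-tilting.

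The second step is the dual of Proposition \ref{P-5-7}: $\Sub_{\Theta}(\mathcal{F})$ is a torsion-free class whenever $\mathcal{F}$ is $\tau^{-1}$-rigid. Given a stable $\mathbb{E}$-triangle $X_{1}\rightarrowtail X\twoheadrightarrow X_{2}\dashrightarrow$ with $X_{1},X_{2}\in\Sub_{\Theta}(\mathcal{F})$, choose a stable $\mathbb{E}$-triangle $X_{1}\rightarrowtail M\twoheadrightarrow C\dashrightarrow$ with $M\in\mathcal{F}$ and apply $\rm (ET4)$ to produce a commutative diagram whose middle row is a stable $\mathbb{E}$-triangle $M\rightarrowtail L\twoheadrightarrow X_{2}\stackrel{\delta}\dashrightarrow$ and whose middle column is $X\rightarrowtail L\twoheadrightarrow C$. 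The $\tau^{-1}$-rigidity hypothesis forces $\delta\in\mathbb{E}_{\Theta}(X_{2},M)=0$ via Lemma \ref{L-5-1}, so $L\cong M\oplus X_{2}$ and the resulting $\Theta$-inflation $X\rightarrowtail M\oplus X_{2}$ places $X$ in $\Sub_{\Theta}(\mathcal{F})$; this is the exact mirror of the splitting argument in Proposition \ref{P-5-7}(1).

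The third step is the dual of Proposition \ref{P-5-8}: if $\mathcal{F}$ is a support torsion-free class and $(\mathcal{A},\Theta)$ has enough $\Theta$-injectives, then $\mathcal{F}=\Sub_{\Theta}(\mathcal{I}(\mathcal{F}))$ and $\mathcal{I}(\mathcal{F})$ is a support $\tau^{-1}$-tilting subcategory. For $F\in\mathcal{F}$, pick a $\Theta$-inflation $F\rightarrowtail I$ with $I\in\mathcal{I}(\mathcal{A})$ and a right $\mathcal{F}$-approximation $g:F'\rightarrow I$ with $F'\in\mathcal{I}(\mathcal{F})$; the inflation $F\rightarrowtail I$ factors through $g$, and the first half of Lemma \ref{4-77} identifies the factoring morphism $F\rightarrow F'$ as a $\Theta$-inflation, so $F\in\Sub_{\Theta}(\mathcal{I}(\mathcal{F}))$. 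Stringing the three steps together yields that $\mathcal{S}\mapsto\Sub_{\Theta}(\mathcal{S})$ and $\mathcal{F}\mapsto\mathcal{I}(\mathcal{F})$ are mutually inverse bijections. The main effort is bookkeeping: one needs to verify that Lemmas \ref{L-5-1} and \ref{4-77} are genuinely symmetric in inflations and deflations, which is transparent from their statements, so no additional length-theoretic input beyond that used for Theorem \ref{main5} is required.
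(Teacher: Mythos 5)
Your proposal is correct and takes essentially the same route the paper intends: Theorem \ref{main6} is stated in the paper without separate proof as ``the dual version of Theorem \ref{main5},'' and your three steps are precisely the duals of Lemma \ref{L-5-5}, Proposition \ref{P-5-7}, and Proposition \ref{P-5-8}, with Lemmas \ref{L-5-1} and \ref{4-77} supplying the symmetric inflation/deflation input exactly as in the original arguments.
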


Next we consider  when torsion classes and torsion-free classes are support. For this purpose, we introduce the following notion.

\begin{definition} We say that a length category $(\mathcal{A},\Theta)$ is {\em $\tau$-tilting finite} if
$$\tors_{\Theta}(\mathcal{A})={\rm s}\text{-}\tors_{\Theta}(\mathcal{A})~\text{and}~\torf_{\Theta}(\mathcal{A})={\rm s}\text{-}\torf_{\Theta}(\mathcal{A}).$$
\end{definition}

\begin{remark}\label{R-5-15} (1) Suppose that $\mathcal{A}$ is Hom-finite and there are only finitely many isomorphism classes of indecomposable objects. We can easily check that $\tors_{\Theta}(\mathcal{A})={\rm s}\text{-}\tors_{\Theta}(\mathcal{A})$ and $\torf_{\Theta}(\mathcal{A})={\rm s}\text{-}\torf_{\Theta}(\mathcal{A})$. This means that $(\mathcal{A},\Theta)$ is $\tau$-tilting finite.

(2) We say a finite dimensional algebra $\Lambda$ is {\em $\tau$-tilting finite} (cf. \cite[Definition 1.1]{De}) if there are only finitely many isomorphism classes of basic  $\tau$-tilting $\Lambda$-modules. It was shown in \cite[Theorem 3.8]{De}) that $\Lambda$ is $\tau$-tilting finite if and only if every torsion class in $\mod \Lambda$ is functorially finite if and only if every torsion-free class in $\mod \Lambda$ is functorially finite. Thus,  our definition coincides with that in \cite[Definition 1.1]{De}.
\end{remark}

When $\mathcal{A}$ is $\tau$-tilting finite, we have the following bijection between support $\tau$-tilting subcategories and support $\tau^{-1}$-tilting subcategories.

\begin{theorem}\label{C-5-15} Suppose that $\mathcal{A}$ has enough $\Theta$-projectives and $\Theta$-injectives. If $\mathcal{A}$ is $\tau$-tilting finite, there is a bijection
$${\rm s}\tau\text{-}{\rm tilt(\mathcal{A})}\longrightarrow{\rm s}\tau^{-1}\text{-}{\rm tilt(\mathcal{A})}$$
given by $\mathcal{S}\mapsto \mathcal{I}(\Fac_{\Theta}(\mathcal{S})^{\perp})$ with the inverse $\mathcal{T}\mapsto \mathcal{P}(^{\perp}\Sub_{\Theta}(\mathcal{T})).$
\end{theorem}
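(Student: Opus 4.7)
The plan is to obtain the bijection by composing three bijections that are already available: Theorem \ref{main5}, Proposition \ref{L-2-2}, and Theorem \ref{main6}. The $\tau$-tilting finite hypothesis is precisely what allows us to identify the ``support'' versions of torsion and torsion-free classes with the full sets $\tors_{\Theta}(\mathcal{A})$ and $\torf_{\Theta}(\mathcal{A})$, so that Proposition \ref{L-2-2} bridges the gap between the two tilting flavors. Since $\mathcal{A}$ has enough $\Theta$-projectives, Theorem \ref{main5} applies; since $\mathcal{A}$ has enough $\Theta$-injectives, Theorem \ref{main6} applies.

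The first step is to assemble the chain of bijections
\begin{equation*}
{\rm s}\tau\text{-}{\rm tilt}(\mathcal{A}) \xrightarrow{\;\Phi_{1}\;} {\rm s}\text{-}\tors_{\Theta}(\mathcal{A}) \xrightarrow{\;\Phi_{2}\;} {\rm s}\text{-}\torf_{\Theta}(\mathcal{A}) \xrightarrow{\;\Phi_{3}\;} {\rm s}\tau^{-1}\text{-}{\rm tilt}(\mathcal{A}),
\end{equation*}
where $\Phi_{1}(\mathcal{S})=\Fac_{\Theta}(\mathcal{S})$ is the bijection from Theorem \ref{main5}, $\Phi_{3}(\mathcal{F})=\mathcal{I}(\mathcal{F})$ is the inverse of the bijection from Theorem \ref{main6}, and $\Phi_{2}$ is the restriction of the lattice anti-isomorphism $\mathcal{T}\mapsto \mathcal{T}^{\perp}$ of Proposition \ref{L-2-2}. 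The content of the $\tau$-tilting finite assumption (Definition preceding Remark \ref{R-5-15}) is that the equalities ${\rm s}\text{-}\tors_{\Theta}(\mathcal{A})=\tors_{\Theta}(\mathcal{A})$ and ${\rm s}\text{-}\torf_{\Theta}(\mathcal{A})=\torf_{\Theta}(\mathcal{A})$ both hold, so Proposition \ref{L-2-2} does indeed restrict to a bijection $\Phi_{2}$.

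The second step is to verify that the composition $\Phi_{3}\circ\Phi_{2}\circ\Phi_{1}$ is given by $\mathcal{S}\mapsto \mathcal{I}(\Fac_{\Theta}(\mathcal{S})^{\perp})$ and that the composition of the inverses is $\mathcal{T}\mapsto \mathcal{P}({^\perp}\Sub_{\Theta}(\mathcal{T}))$. Tracing a support $\tau$-tilting subcategory $\mathcal{S}$ through the chain yields $\mathcal{S}\mapsto \Fac_{\Theta}(\mathcal{S})\mapsto \Fac_{\Theta}(\mathcal{S})^{\perp}\mapsto \mathcal{I}(\Fac_{\Theta}(\mathcal{S})^{\perp})$, and similarly in the reverse direction $\mathcal{T}\mapsto \Sub_{\Theta}(\mathcal{T})\mapsto {^\perp}\Sub_{\Theta}(\mathcal{T})\mapsto \mathcal{P}({^\perp}\Sub_{\Theta}(\mathcal{T}))$, which matches the stated formulas.

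There is no serious obstacle; the only point requiring a brief comment is that the $\tau$-tilting finiteness hypothesis is used in an essential way, because without it Proposition \ref{L-2-2} would only yield a bijection between the full lattices $\tors_{\Theta}(\mathcal{A})$ and $\torf_{\Theta}(\mathcal{A})$ rather than between their support sub-posets, and there is no a priori reason for $(-)^{\perp}$ to preserve the ``support'' condition. Once that point is made, the theorem follows immediately by composing the three bijections, and in particular inherits the inverse formulas from the corresponding inverses of Theorems \ref{main5} and \ref{main6} together with Proposition \ref{L-2-2}.
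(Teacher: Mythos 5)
Your proof is correct and follows exactly the same route as the paper's own (one-line) proof: compose the bijection of Theorem \ref{main5}, the lattice anti-isomorphism of Proposition \ref{L-2-2}, and the bijection of Theorem \ref{main6}, with the $\tau$-tilting finiteness used to identify the support posets with the full lattices so that the middle map applies.
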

\begin{proof} This follows  from Proposition \ref{L-2-2}, Theorem \ref{main5} and Theorem \ref{main6}.
\end{proof}

\begin{remark}
 In general, the converse of Theorem \ref{C-5-15} is not true. For instance, if  $\mathcal{A}=\mod \Lambda$ for a finite dimensional algebra $\Lambda$, then we have a bijection ${\rm s}\tau\text{-}{\rm tilt~\mathcal{A}}\rightarrow{\rm s}\tau^{-1}\text{-}{\rm tilt~\mathcal{A}}$ (cf. \cite[Section 2.4]{Ad}). But $\mathcal{A}$ is $\tau$-tilting finite if and only if  the set of bricks is finite (cf. \cite[Theorem 4.2]{De}).
\end{remark}

We end this section with an example illustrating some of our results.
\begin{example} Keep the notation used in Example \ref{E-2-22}. Recall that $(\mathcal{A},l_{\mathcal{Y}})$ is a  stable length  category. We know that its Auslander-Reiten quiver is as follows:
\begin{equation}\label{P-1}
\begin{array}{l}
\xymatrix@!=0.5pc{
   {\color{red}|}S_2[-1]\ar[dr] && S_1[-1]\ar[dr] && P_1{\color{red}|}  \\
   & {\color{red}|}I_2[-1]\ar[ur]\ar[dr] && P_2{\color{red}|}\ar[ur] & \\
     && {\color{red}|}S_3{\color{red}|}\ar[ur] &&}
\end{array}
\end{equation}
The $l_{\mathcal{Y}}$-projective (resp. $l_{\mathcal{Y}}$-injective) objects are highlighted by a vertical line to their left (resp. to their right).  By the diagram (\ref{P-1}), it is easy to see that $\mathcal{A}$ has enough  $l_{\mathcal{Y}}$-projectives and $l_{\mathcal{Y}}$-injectives. Moreover, we observe that $(\mathcal{A},l_{\mathcal{Y}})$ is $\tau$-tilting finite by Remark \ref{R-5-15}(1). Therefore, by Corollary \ref{C-5-15}, we have bijections
$${\rm s}\tau\text{-}{\rm tilt(\mathcal{A})}\longleftrightarrow\tor_{l_{\mathcal{Y}}}(\mathcal{A})\longleftrightarrow\torf_{l_{\mathcal{Y}}}(\mathcal{A})\longleftrightarrow{\rm s}\tau^{-1}\text{-}{\rm tilt(\mathcal{A})}.$$
%$\add(\{\})$
In Table 1, we list  all torsion pairs $(\mathcal{T},\mathcal{F})$, the corresponding support $\tau$-tilting subcategories $\mathcal{P}(\mathcal{T})$ and support $\tau^{-1}$-tilting subcategories $\mathcal{I}(\mathcal{F})$, where the black vertices are $\mathcal{T}$ (resp. $\mathcal{P}(\mathcal{T})$), and the white vertices are $\mathcal{F}$ (resp. $\mathcal{I}(\mathcal{F})$).
\end{example}

\section*{Acknowledgments}
Li Wang is supported by the National Natural Science Foundation of China (Grant No. 12301042) and the  Natural
Science Foundation of Universities of Anhui (No. 2023AH050904). Jiaqun Wei is supported by the National Natural Science Foundation of China (Grant No. 12271249).
Haicheng Zhang is supported by the National Natural Science Foundation of China (No.12271257) and the Natural Science Foundation of Jiangsu Province of China (No.BK20240137).
Panyue Zhou is supported by the National Natural Science Foundation of China (Grant No. 12371034) and the Scientific Research Fund of Hunan Provincial Education Department (Grant No. 24A0221).

\newpage

\begin{center}
\scalebox{2.2}{\includegraphics{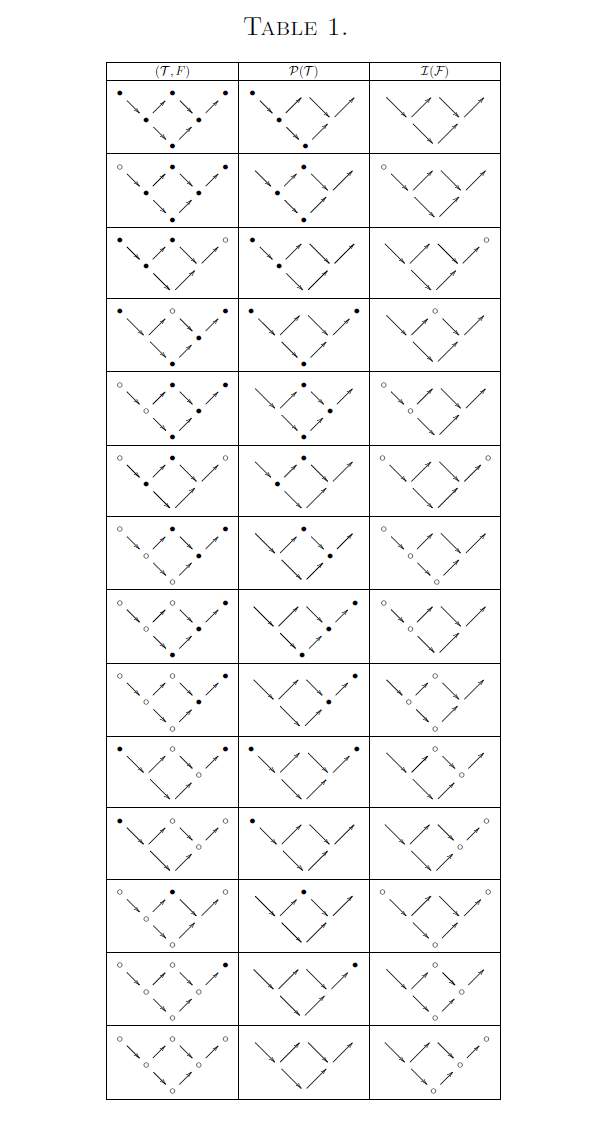}}
\end{center}

\newpage

\end{document}